\def\ssign{\textsection\nobreak\hspace{1pt plus 0.3pt}}
\let\origsection=\section 
\def\mysection{\@mystartsection{section}{1}\z@{.7\linespacing\@plus\linespacing}{.5\linespacing}{\normalfont\scshape\centering\ssign}}
\def\section{\@ifstar{\origsection*}{\mysection}}
\def\appendix{\par\c@section\z@ \c@subsection\z@
   \let\sectionname\appendixname
   \let\section=\origsection 
   \def\thesection{\@Alph\c@section}}
\def\@mystartsection#1#2#3#4#5#6{\if@noskipsec \leavevmode \fi
 \par \@tempskipa #4\relax
 \@afterindenttrue
 \ifdim \@tempskipa <\z@ \@tempskipa -\@tempskipa \@afterindentfalse\fi
 \if@nobreak \everypar{}\else
     \addpenalty\@secpenalty\addvspace\@tempskipa\fi
 \@dblarg{\@mysect{#1}{#2}{#3}{#4}{#5}{#6}}}
\def\@mysect#1#2#3#4#5#6[#7]#8{\edef\@toclevel{\ifnum#2=\@m 0\else\number#2\fi}\ifnum #2>\c@secnumdepth \let\@secnumber\@empty
  \else \@xp\let\@xp\@secnumber\csname the#1\endcsname\fi
  \@tempskipa #5\relax
  \ifnum #2>\c@secnumdepth
    \let\@svsec\@empty
  \else
    \refstepcounter{#1}\edef\@secnumpunct{\ifdim\@tempskipa>\z@ \@ifnotempty{#8}{\@nx\enspace}\else
        \@ifempty{#8}{.}{\@nx\enspace}\fi
    }\@ifempty{#8}{\ifnum #2=\tw@ \def\@secnumfont{\bfseries}\fi}{}\protected@edef\@svsec{\ifnum#2<\@m
        \@ifundefined{#1name}{}{\ignorespaces\csname #1name\endcsname\space
        }\fi
      \@seccntformat{#1}}\fi
  \ifdim \@tempskipa>\z@ \begingroup #6\relax
    \@hangfrom{\hskip #3\relax\@svsec}{\interlinepenalty\@M #8\par}\endgroup
    \ifnum#2>\@m \else \@tocwrite{#1}{#8}\fi
  \else
  \def\@svsechd{#6\hskip #3\@svsec
    \@ifnotempty{#8}{\ignorespaces#8\unskip
       \@addpunct.}\ifnum#2>\@m \else \@tocwrite{#1}{#8}\fi
  }\fi
  \global\@nobreaktrue
  \@xsect{#5}}
\renewcommand{\PrintDOI}[1]{\doi{#1}}
\numberwithin{equation}{section}
\numberwithin{figure}{section}
\def\rmlabel{\upshape({\itshape \roman*\,})}
\def\alabel{\upshape({\itshape \alph*\,})}
\def\Alabel{\upshape({\itshape \Alph*\,})}
\renewcommand\labelenumi{(\roman{enumi})}
\renewcommand\theenumi\labelenumi
\theoremstyle{plain}
\newtheorem{thm}{Theorem}[section]
\newtheorem{claim}[thm]{Claim}
\newtheorem{cor}[thm]{Corollary}
\newtheorem{lemma}[thm]{Lemma}
\theoremstyle{definition}
\newtheorem{rem}[thm]{Remark}
\let\eps=\varepsilon
\let\theta=\vartheta
\let\rho=\varrho
\let\phi=\varphi
\def\NN{\mathds N}
\def\PP{\mathds P}
\def\RR{\mathds R}
\def\ind{\mathds 1}
\def\cU{{\mathcal U}}
\def\ff{\mathfrak{f}}
\def\hp{\hat p}
\let\polishlcross=\l
\def\l{\ifmmode\ell\else\polishlcross\fi}
\def\moverlay{\mathpalette\mov@rlay}
\def\mov@rlay#1#2{\leavevmode\vtop{   \baselineskip\z@skip \lineskiplimit-\maxdimen
		\ialign{\hfil$\m@th#1##$\hfil\cr#2\crcr}}}
\newcommand{\charfusion}[3][\mathord]{
	#1{\ifx#1\mathop\vphantom{#2}\fi\mathpalette\mov@rlay{#2\cr#3}}
	\ifx#1\mathop\expandafter\displaylimits\fi
}
\newcommand{\dcup}{\charfusion[\mathbin]{\cup}{\cdot}}
\def\tand{\ \text{and}\ }
\def\qand{\quad\text{and}\quad}
\def\qqand{\qquad\text{and}\qquad}
\newcommand{\vrhup}[1]{\scaleobj{0.6}{\scalerel*{\rightharpoonup}{#1}}}
\newcommand{\nrhup}{\mathord{\scaleobj{0.6}{\scalerel*{\rightharpoonup}{x}}}}
\newcommand{\wrhup}{\scaleobj{0.6}{\scalerel*{\rightharpoonup}{W}}}
\def\vseq#1{\ThisStyle{  \mathord{\vbox{\offinterlineskip\ialign{    \hfil##\hfil\cr
					$\SavedStyle{}_{\smash{\vrhup#1}}$\cr
					\noalign{\kern-0.7\scriptspace}
					$\SavedStyle#1$\cr}}}}}
\def\seq#1{\ThisStyle{  \mathord{\vbox{\offinterlineskip\ialign{    \hfil##\hfil\cr
					$\SavedStyle{}_{\smash{\nrhup}}$\cr
					\noalign{\kern-0.5\scriptspace}
					$\SavedStyle#1$\cr}}}}}
\def\wseq#1{\ThisStyle{  \mathord{\vbox{\offinterlineskip\ialign{    \hfil##\hfil\cr
					$\SavedStyle{}_{\smash{\wrhup#1}}$\cr
					\noalign{\kern-0.7\scriptspace}
					$\SavedStyle#1$\cr}}}}}
\let\setminus=\smallsetminus
\let\lra=\longrightarrow
\let\to=\lra
\newcommand{\oset}[3][0ex]{\mathrel{\mathop{#3}\limits^{
			\vbox to#1{\kern-2.1\ex@
				\hbox{$\scriptstyle#2$}\vss}}}}
\DeclareMathSymbol{*}{\mathbin}{symbols}{"03}
\def\clra{\oset{*\hspace{1pt}}{\lra}}
\newcommand{\pushright}[1]{\ifmeasuring@#1\else\omit\hfill$\displaystyle#1$\fi\ignorespaces}
\newcommand{\pushleft}[1]{\ifmeasuring@#1\else\omit$\displaystyle#1$\hfill\fi\ignorespaces}
\newsavebox\eebox
\savebox\eebox{\tikz{
		\draw[black,fill=black] (90:1) circle (.35);
		\draw[black,fill=black] (210:1) circle (.35);
		\draw[black,fill=black] (330:1) circle (.35);
		\draw[black,line width=0.28cm ] (90:1) -- (330:1);
		\draw[black,line width=0.28cm ] (90:1) -- (210:1);
		\draw[opacity=0] (0:1.2) circle (0.1);
}}
\newcommand{\ee}{\mathord{\scaleobj{1.2}{\scalerel*{\usebox{\eebox}}{x}}}}
\newsavebox\epbox
\savebox\epbox{\tikz{
		\draw[black,fill=black] (0.866,1) circle (.35);
		\draw[black,fill=black] (-0.866,1) circle (.35);
		\draw[black,fill=black] (210:1) circle (.35);
		\draw[black,fill=black] (330:1) circle (.35);
		\draw[black,line width=0.28cm ] (0.866,1) -- (330:1);
		\draw[black,line width=0.28cm ] (-0.866,1) -- (210:1);
		\draw[opacity=0] (0:1.2) circle (0.1);
}}
\newcommand{\ep}{\mathord{\scaleobj{1.2}{\scalerel*{\usebox{\epbox}}{x}}}}
\let\N=\NN
\let\R=\RR
\let\df=\fd
\let\Hc=\cH
\let\Kc=\cK
\let\Pc=\cP
\let\Jn=J
\let\pst=\ps
\newcommand{\Gnp}{G(n,p)}
\newcommand{\Gnpst}{G(n, \pst)}
\newcommand{\npl}{p^{\binom \ell 2}n^{\ell}}
\newcommand{\norm}[1]{\left \|#1 \right \|}
\newcommand{\cutnorm}[1]{\left \|#1 \right \|_\square}
\begin{document}
	\
	\title{Canonical colourings in random graphs}

	\author[N.~Kam\v{c}ev]{Nina Kam\v{c}ev}
	\address{Department of Mathematics, Faculty of Science, University of Zagreb, Croatia}
	\email{nina.kamcev@math.hr}

	\author[M.~Schacht]{Mathias Schacht}
	\address{Fachbereich Mathematik, Universit\"at Hamburg, Hamburg, Germany}
	\email{schacht@math.uni-hamburg.de}
	
	\thanks{The research was supported through the European Union's Horizon~2020 research and innovation programme under 
		the Marie Sk\l odowska-Curie Action RobSparseRand 101038085 and by the ERC Consolidator Grant PEPCo~724903.}

	\keywords{Random graphs, thresholds, Ramsey's theorem, canonical colourings}
	\subjclass[2020]{05C80 (primary), 05D10, 05C55 (secondary)}
	
	\dedicatory{Dedicated to the memory of Martin Aigner}

	\begin{abstract}
		R\"odl and Ruci\'nski~[\emph{Threshold functions for Ramsey properties}, J.~Amer. Math. Soc.~\textbf{8}~(1995)] established 
		Ramsey's theorem for random graphs. In particular, for fixed integers $r$, $\ell\geq 2$ they proved that 
		$\hp_{K_\l,r}(n)=n^{-\frac{2}{\l+1}}$ is a threshold for the Ramsey property that every $r$-colouring of the 
		edges of the binomial random graph $G(n,p)$ yields 
		a monochromatic copy of $K_\ell$. 
		We investigate how this result extends to arbitrary colourings of $G(n,p)$ with an unbounded number of colours. In this context,
		Erd\H{o}s and Rado~[\emph{A combinatorial theorem}, J.~London Math. Soc.~\textbf{25}~(1950)] proved that any edge-colouring of a sufficiently large complete graph contains  one of four canonical colourings of $K_\ell$ -- a monochromatic, or rainbow, or \textit{min}  or \textit{max}  colouring; a \textit{min-colouring} of $K_\ell$ is a colouring in which two edges have the same colour if and only if they have the same minimal vertex.
		We transfer the Erd\H os--Rado theorem to the random graph $G(n,p)$ and show that both thresholds coincide when $\l\geq 4$.
		As a consequence, the proof yields $K_{\l+1}$-free graphs~$G$ for which every edge colouring contains a 
		canonically coloured $K_\l$.
		
		The $0$-statement of the threshold is a direct consequence of the corresponding statement of the R\"odl--Ruci\'nski theorem and the 
		main contribution is the $1$-statement. The proof of the $1$-statement employs the transference principle of 
		Conlon and Gowers~[\emph{Combinatorial theorems in sparse random sets}, Ann. of Math.~(2)~\textbf{184}~(2016)].
	\end{abstract}

	\maketitle

	\section{Introduction}
	\label{sec:introduction}
	In the last three decades, extremal and Ramsey-type properties of random graphs were considered, 
	which led to several general approaches to these questions (see, e.g.~\cites{bms15,cg16,klr97,nenadov21,rr95,st15,schacht16} 
	and references therein). We consider Ramsey-type questions for the binomial random graph~$G(n,p)$. For graphs $G$ and $H$
	and an integer $r\geq 2$,
	we write 
	\[
	G\lra(H)_r
	\]
	to signify the statement that every $r$-colouring of the edges of $G$ yields a monochromatic copy of~$H$. Ramsey's theorem~\cite{r30} 
	asserts that for fixed $H$ and $r$ the family of graphs~$G$ satisfying $G\lra(H)_r$ is non-empty. Obviously, this family is monotone\footnote{that is, closed with respect to adding edges}. Hence, by a well-known result of Bollob\'as and Thomason~\cite{bt87}, there is a threshold function $\hp_{H,r}\colon \NN\to[0,1]$ such that  
	\begin{equation}\label{eq:threshold}
		\lim_{n\to\infty}\PP\big(G(n,p)\lra(H)_r\big)=\begin{cases}
			0\,,&\text{if $p\ll\hp_{H,r}$,}\\
			1\,,&\text{if $p\gg\hp_{H,r}$.}
		\end{cases}
	\end{equation}
	As usual we shall refer to any such function as \textit{the} threshold of that property, even though it is not unique.
	
	R\"odl and Ruci\'nski~\cites{rr93,rr95} determined the threshold $\hp_{H,r}$ for every graph $H$ and every fixed number of colours~$r$. We restrict our attention
	to the situation when $H$ is a clique $K_\l$ and state their result for that case only.
	\begin{thm}[R\"odl \& Ruci\'nski]
		\label{thm:random-ramsey}
		For every $r\geq 2$ and $\l\geq 3$, the equality $\hp_{K_\l,r}(n)=n^{-\frac{2}{\ell+1}}$ holds.\qed
	\end{thm}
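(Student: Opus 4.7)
The plan splits along the two directions of the threshold in~\eqref{eq:threshold}. Observing that $n^{-2/(\l+1)} = n^{-1/m_2(K_\l)}$, where $m_2(K_\l) = (\binom{\l}{2} - 1)/(\l-2) = (\l+1)/2$ is the so-called $2$-density of~$K_\l$, the claimed threshold is exactly the one at which $\Gnp$ typically starts containing two copies of $K_\l$ sharing an edge; this is the guiding intuition throughout.

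For the $0$-statement ($p \ll n^{-2/(\l+1)}$), I would deploy the deletion method. Let $X$ count the copies of $K_\l$ in $\Gnp$ and $Y$ the pairs of such copies sharing at least one edge. A standard exponent calculation leveraging the definition of $m_2(K_\l)$ yields $\EE[Y] = o(\EE[X])$ in this range. Therefore, with probability tending to $1$, removing a small edge set leaves a subgraph $G' \subseteq \Gnp$ whose $K_\l$-copies are pairwise edge-disjoint. A careful $r$-colouring of $G'$ --- giving two designated edges of each $K_\l$-copy distinct colours and controlling the interaction with the deleted edges upon extension --- then produces an $r$-colouring of $\Gnp$ without a monochromatic~$K_\l$.

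The $1$-statement ($p \gg n^{-2/(\l+1)}$) is the hard part. The plan is to transfer the deterministic statement $K_R \lra (K_\l)_r$, valid for $R = R_r(K_\l)$, to $\Gnp$ via either the transference principle of Conlon--Gowers~\cite{cg16} or Schacht~\cite{schacht16}, or via the $\cK\cL\cR$ theorem established in~\cites{bms15,cg16,st15} (confirming the conjecture of~\cite{klr97}). Concretely, given any $r$-edge-colouring of $\Gnp$, apply a sparse regularity lemma to the densest colour class; the reduced graph contains a deterministically Ramsey structure, and $\cK\cL\cR$ lifts it to a genuine monochromatic $K_\l$ inside $\Gnp$.

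The main obstacle is the $1$-statement: the $0$-statement is routine first-moment and deletion work, whereas the $1$-statement requires the substantial machinery of sparse regularity, transference, or hypergraph containers. R\"odl and Ruci\'nski's original proof~\cite{rr95} bypasses this machinery with an intricate induction tailored to $K_\l$, but the modern, unified approach via~\cites{bms15,cg16,schacht16} is considerably cleaner and generalises more readily, which will be useful for the canonical Ramsey extension pursued in this paper.
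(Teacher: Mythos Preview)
The paper does not prove this theorem; it is quoted as a known result of R\"odl and Ruci\'nski~\cites{rr93,rr95} and closed with a terminal \qed. So there is no in-paper argument to compare against, and your proposal should be read as a standalone plan for recovering the result.

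Your plan for the $1$-statement is sound and is one of the standard modern routes: transference~\cites{cg16,schacht16} or the K\L R/container machinery~\cites{bms15,st15,cgss14} does yield the $1$-statement for strictly $2$-balanced~$H$, and this is considerably cleaner than the original inductive argument of~\cite{rr95}. This is also in spirit aligned with the surrounding paper, which relies on Conlon--Gowers transference for its main lemmata.

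Your $0$-statement plan, however, has a genuine gap. The calculation $\EE[Y]=o(\EE[X])$ is correct for $p\ll n^{-2/(\l+1)}$, but it does not deliver a subgraph~$G'\subseteq\Gnp$ whose $K_\l$-copies are pairwise edge-disjoint \emph{and} whose colouring extends safely to the deleted edges. The difficulty is precisely the step you wave through: when a deleted edge~$e$ is reinserted, it may lie in several copies of~$K_\l$ in~$G$ that were absent from~$G'$, and with only two colours there is no local reason a consistent choice exists. Note that for, say, $p=n^{-2/(\l+1)}/\log n$ one still has $\EE[Y]\to\infty$, so overlapping copies are genuinely abundant and cannot be dispatched edge by edge. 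The actual $0$-statement of~\cite{rr93} rests on a structural analysis showing that, below the threshold, the hypergraph of $K_\l$-copies on~$E(\Gnp)$ decomposes (after iteratively peeling sparse pieces) into forest-like configurations that admit an explicit $2$-colouring; this structural step is the missing ingredient in your sketch, and it is where essentially all the work in the $0$-statement lies.
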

	In fact, R\"odl and Ruci\'nski established a \emph{semi-sharp} threshold, i.e., the $0$-statement in~\eqref{eq:threshold} holds as long as 
	$p(n)\leq c_{\l,r}n^{-\frac{2}{\ell+1}}$ for some sufficiently small constant~$c_{\l,r}>0$
	and, similarly,  the $1$-statement becomes true already if~$p(n)\geq C_{\l,r}n^{-\frac{2}{\ell+1}}$ for some $C_{\l,r}$.
	This was sharpened recently in~\cite{fkss}, where the gap between $c_{\l,r}$ and $C_{\l,r}$ was closed.
	Perhaps surprisingly, the asymptotic growth of the threshold function $\hp_{K_\l,r}(n)$ in Theorem~\ref{thm:random-ramsey}
	is independent of the number of colours~$r$. 
	
	We are interested in edge colourings of $\Gnp$ which are not restricted to a fixed number of colours. 
	However, if the number of colours is unrestricted, 
	then this allows injective edge colourings and, consequently, monochromatic $K_\ell$-copies might be prevented. Nevertheless,
	Erd\H os and Rado~\cite{er50}, showed that certain \emph{canonical} patterns\footnote{A \textit{pattern} refers to a colouring of the `target graph' $K_\ell$.} are unavoidable in edge colourings of sufficiently large 
	cliques. 
	Obviously, the monochromatic and the injective pattern (in which each edge receives a different colour) must be canonical.
	Two additional canonical patterns arise by ordering the vertices of~$K_n$ and colouring every edge~$uv$ 
	by $\min\{u,v\}$ or colouring every edge by its maximal vertex. More generally, for finite graphs $G$ and $H$ 
	with ordered vertex sets, we write 
	\[
	G\clra(H)
	\]
	if for every edge colouring $\phi\colon E(G)\to\NN$ there exists an order-preserving graph embedding~$\varsigma\colon H\to G$ such that one of the 
	following holds:
	\begin{enumerate}[label=\alabel]
		\item\label{it:mono} the copy $\varsigma(H)$ of $H$ is monochromatic under $\phi$, 
		\item\label{it:rain} or $\phi$ restricted to $E(\varsigma(H))$ is injective, 	
		\item\label{it:min} or for all edges $e$, $e'\in E(\varsigma(H))$ we have 
		$\phi(e)=\phi(e')\ \Longleftrightarrow\ \min(e)=\min(e')$,
		\item\label{it:max} or for all edges $e$, $e'\in E(\varsigma(H))$ we have 
		$\phi(e)=\phi(e')\ \Longleftrightarrow\ \max(e)=\max(e')$.
	\end{enumerate}
	We call an ordered copy of $H$ in $G$  \emph{canonical} (with respect to $\varphi$) if it displays one of the four patterns described in~\ref{it:mono}--\ref{it:max}.
	
	Note that for the patterns described in~\ref{it:mono} and~\ref{it:rain} the orderings of the vertex sets have no bearing. We shall 
	refer to copies exhibiting an injective colouring as \emph{rainbow} copies of $H$ (even if $|E(H)|\neq 7$). Moreover, we refer to  
	the patterns appearing in~\ref{it:min} and~\ref{it:max} as \emph{min-coloured} and \emph{max-coloured}, respectively. In case only the 
	backward implications in~\ref{it:min} or~\ref{it:max} are enforced, then we refer to those colourings as \emph{non-strict}, e.g., 
	if $\min(e)=\min(e')$ yields $\phi(e)=\phi(e')$ for all edges $e$, $e'\in E(\varsigma(H))$, then~$\varsigma(H)$ is a non-strictly min-coloured copy 
	of~$H$. Obviously, monochromatic copies are also non-strictly min- and max-coloured. 
	
	Hereafter, the vertex sets of all 
	graphs considered are ordered. In particular, for cliques and random graphs we assume 
	\[
	V(K_n)=[n]
	\qqand
	V(G(n,p))=[n]\,.
	\]
	With this notation at hand, the aforementioned 
	canonical Ramsey theorem of Erd\H os and Rado~\cite{er50} restricted to the graph case asserts that canonical copies are unavoidable.
	\begin{thm}[Erd\H os \& Rado]\label{thm:canonical-ramsey}
		For every $\l\geq 3$, there exists $n$ such that $K_{n}\clra(K_{\l})$.\qed
	\end{thm}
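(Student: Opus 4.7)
The plan is to deduce this finitary canonical Ramsey theorem via a $4$-uniform hypergraph Ramsey argument, following the original approach of Erd\H os and Rado. Given an edge colouring $\phi\colon E(K_n)\to\NN$ with $n$ to be chosen sufficiently large, I would first associate to every ordered $4$-subset $T=\{a<b<c<d\}\subseteq[n]$ its \emph{type} $\tau(T)$: the equivalence relation on the six edges $\{ab,ac,ad,bc,bd,cd\}$ recording which pairs receive the same colour under~$\phi$. The number of such equivalence relations is bounded by the Bell number $B_6=203$, so this yields an auxiliary colouring $\tau\colon\binom{[n]}{4}\to[B_6]$ with only finitely many colours.

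Next I would invoke the classical Ramsey theorem for $4$-uniform hypergraphs: choosing $n$ large enough, one extracts an ordered $\l$-subset $S=\{v_1<\dots<v_\l\}\subseteq[n]$ on which $\tau$ is constant, equal to some partition $\tau^\ast$. Any two edges of $K_\l[S]$ span at most four vertices and are therefore contained in a common $4$-subset of $S$ (padding with further elements of $S$ if necessary); consequently, the equality pattern of $\phi$ restricted to $E(K_\l[S])$ is fully determined by $\tau^\ast$ applied to the relative position of each edge pair inside a $4$-subset.

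The main obstacle is the final step: classifying those partitions $\tau^\ast$ whose induced relation on $E(K_\l[S])$ is actually an equivalence relation, since transitivity across different $4$-subsets of $S$ eliminates most of the $203$ candidate types. I expect the surviving ones to be precisely those for which the class of an edge $e=\{u,v\}$ with $u<v$ depends only on the restriction of the pair $(u,v)$ to some fixed subset $I\subseteq\{1,2\}$ of coordinates. The four possibilities $I=\vn,\{1\},\{2\},\{1,2\}$ then correspond exactly to the monochromatic, min-coloured, max-coloured and rainbow patterns listed in~\ref{it:mono}--\ref{it:max}, so one of them must arise on $S$, yielding the desired canonical copy of $K_\l$. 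This final compatibility analysis is the finite combinatorial core of the Erd\H os--Rado canonical partition theorem and is where the proof really earns its keep.
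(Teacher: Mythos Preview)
Your approach is the classical Erd\H os--Rado reduction to $4$-uniform Ramsey and is essentially correct, with one small repair: taking $|S|=\l$ is not enough when $\l\in\{3,4\}$, since for $|S|\leq 4$ there is at most one $4$-subset and hence no transitivity constraint to rule out the non-canonical types. You should instead extract a $\tau$-monochromatic set $S$ of size $\max(\l,5)$ (or simply a bit larger than $\l$), carry out the type analysis on $S$ to conclude that $\phi$ restricted to $S$ is globally canonical, and then read off a canonical $K_\l$ inside. With that fix the argument goes through.

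This is, however, a genuinely different proof from the one the paper gives in Section~\ref{sec:outline}. There the authors follow Lefmann--R\"odl and Alon et al.: they split into a \emph{bounded} case (every vertex sees at most $\delta n$ edges of each colour), handled by a counting/deletion argument that yields a rainbow $K_\l$, and an \emph{unbounded} case, handled by iterating into large monochromatic neighbourhoods \`a la the usual Ramsey proof to build a non-strictly min- or max-coloured $K_{(\l-2)^2+2}$, from which a canonical $K_\l$ drops out by pigeonhole. Your route is conceptually cleaner and gives the full Erd\H os--Rado canonisation in one stroke; the paper's route is chosen precisely because it is the template that transfers to $G(n,p)$: each half of the dichotomy becomes one of the two main lemmata (Lemmata~\ref{l:rainbow} and~\ref{l:strictly-min}). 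As the authors themselves note in Section~\ref{sec:conclusion}, the use of higher-uniformity Ramsey in your approach is exactly what obstructs a direct transfer to the random setting at the correct threshold.
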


	We are interested in a common generalisation of Theorems~\ref{thm:random-ramsey} and~\ref{thm:canonical-ramsey}. 
	Namely, owing to Theorem~\ref{thm:canonical-ramsey}, for any ordered graph $H$ and sufficiently large $n$, the fact that $K_n\clra(H)$ raises 
	the problem of estimating 
	the threshold $\hp_H\colon \NN\to[0,1]$ such that  
	\begin{equation}\label{eq:cthreshold}
		\lim_{n\to\infty}\PP\big(G(n,p)\clra(H)\big)=\begin{cases}
			0\,,&\text{if $p\ll\hp_{H}$,}\\
			1\,,&\text{if $p\gg\hp_{H}$.}
		\end{cases}
	\end{equation}
    Unless the vertices of a graph $H$ can be covered by only two edges, the only canonical colourings of $H$ using at most two colours are monochromatic. Consequently, 
	\[
	\hp_H\geq\hp_{H,2}
	\] 
	for every such graph~$H$; indeed, a two-colouring of $G(n,p)$ establishing a lower bound on $\hp_{H,2}$ automatically yields a lower bound on $\hp_H$. In particular, for cliques on at least four vertices, this may suggest that the asymptotics of $\hp_{K_\ell, 2}$ and $\hp_{K_\ell}$ coincide, and our main result verifies this. 
	\begin{thm} \label{t:main}
		For every $\l \geq 4$, there exists $C>0$ such that for $p=p(n)\geq Cn^{-\frac{2}{\l+1}}$ we have 
		\[
		\lim_{n\to\infty}\PP\big(G(n,p)\clra(K_\l)\big)
		=
		1\,.
		\]
	\end{thm}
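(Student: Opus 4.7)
My approach is to transfer a supersaturated form of the Erd\H os--Rado theorem from $K_n$ to $G(n,p)$ via the Conlon--Gowers transference principle. The first ingredient is a dense supersaturation statement: for every $\ell \geq 3$ and $\delta > 0$, there exists $n_0$ such that every edge colouring of $K_n$ with $n \geq n_0$ contains at least $\delta n^{\ell}$ ordered canonical copies of $K_\ell$. This follows routinely from Theorem~\ref{thm:canonical-ramsey} by an averaging argument over sub-cliques of bounded size.

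The second ingredient is a dichotomy on the colour classes of a given edge colouring $\phi$ of $G = G(n,p)$. Fix a small parameter $\eta > 0$ and call a colour \emph{big} if its class contains at least $\eta p \binom{n}{2}$ edges, and \emph{small} otherwise; there are at most $1/\eta$ big colours. If the big colours account for more than half of $E(G)$, then after merging all small-colour edges into a single auxiliary colour we obtain a colouring with at most $\lceil 1/\eta\rceil + 1$ colours, so Theorem~\ref{thm:random-ramsey} yields a monochromatic (hence canonical) $K_\ell$ with high probability.

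The remaining dispersed case, where the small colours span $\Omega(p\binom{n}{2})$ edges of $G$, is handled by the Conlon--Gowers transference. The key dense observation is that in $K_n$, a colouring whose colour classes are all of size at most $\eta n^2$ can contain only $O(\eta^{\ell/2-1} n^\ell)$ monochromatic $K_\ell$'s, which is negligible compared to the $\delta n^\ell$ canonical copies produced by the supersaturation statement (provided $\eta$ is much smaller than $\delta^{2/(\ell-2)}$). Hence most canonical copies in such a colouring must be rainbow, min-, or max-coloured. At the threshold $p = C n^{-2/(\ell+1)}$, the Conlon--Gowers transference lifts this dense conclusion to the random setting, producing a canonical copy of $K_\ell$ in $G(n,p)$ under the given colouring.

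The principal obstacle is fitting canonical Ramsey into the Conlon--Gowers framework, since that framework is naturally phrased for properties defined by a bounded number of colour-like subgraph indicators, while canonical Ramsey involves an unbounded palette. A plausible remedy is to encode canonical patterns via the local equivalence structure of colour coincidences on each vertex star, which depends only on the combinatorial type of the pattern and not on the actual colours used; this repackages the property in a bounded way that is amenable to transference. The hypothesis $\ell \geq 4$ enters precisely here: only then do all non-monochromatic canonical patterns require at least three distinct colours, which aligns the canonical threshold with the R\"odl--Ruci\'nski two-colour threshold of Theorem~\ref{thm:random-ramsey} and makes the dichotomy tight.
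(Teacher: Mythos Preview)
Your plan has a minor repairable gap and one genuine obstacle that the proposal does not overcome.

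\textbf{The big-colour case.} When you merge all small colours into one auxiliary colour and apply Theorem~\ref{thm:random-ramsey}, the monochromatic $K_\ell$ you obtain may well lie in the auxiliary colour, in which case it is \emph{not} monochromatic under~$\phi$ and need not be canonical at all. This is fixable---one can instead invoke a robust (transference-based) Ramsey statement on the dense subgraph spanned by the big colours---but as written the step is incorrect.

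\textbf{The transference step.} The central difficulty, which you identify yourself, is not resolved. Conlon--Gowers transference produces, for a \emph{single} subgraph $F\subseteq G(n,p)$ (or a bounded family of subgraphs), a dense model whose edge distribution and $K_\ell$-count approximate those of $F$. It does not transfer the quantity ``number of canonical copies of $K_\ell$'', because that quantity is not a function of one or boundedly many subgraphs: it depends on the full equality pattern of an unbounded palette. Your proposed remedy---encoding via ``the local equivalence structure of colour coincidences on each vertex star''---is not an argument; one would need to specify which finitely many subgraphs of $G(n,p)$ to hand to the transference machinery and why the $K_\ell$-counts in their dense models force a canonical copy back in $G$. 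Nothing in the proposal does this. In particular, your dense supersaturation statement, while true, cannot simply be ``lifted'' because there is no single colouring of $K_n$ to which it is being applied.

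\textbf{How the paper proceeds instead.} The paper's dichotomy is \emph{local} rather than global: it asks whether, on every large vertex set $U$, most vertices have a monochromatic neighbourhood of size $\Omega(p|U|)$. If not, some large $U$ supports a $(\delta,p)$-bounded colouring, and a counting argument via transference (Lemma~\ref{l:rainbow}) shows that most $K_\ell$'s in $G[U]$ are rainbow. If yes, the paper \emph{constructs from the colouring} a bounded family of subgraphs $F_1,\dots,F_L\subseteq G$ (each $F_i$ consisting of edges whose colour is determined by the smaller endpoint) and iterates through their dense models to exhibit $\Omega(p^{\binom{\ell}{2}}n^\ell)$ non-strictly min/max-coloured $K_\ell$'s in $\bigcup_i F_i$; a further count rules out the degenerate ones (Lemma~\ref{l:strictly-min}). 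The point is that the colouring is used to \emph{build} the subgraphs fed to transference, rather than hoping transference will respect the colouring. Your sketch lacks any analogue of this construction, and without it the dispersed case does not go through.
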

	Combining Theorem~\ref{t:main} with the corresponding lower bound on $\hp_{K_\l,2}$ shows that the threshold for 
	the canonical Ramsey property is semi-sharp for $\l\geq 4$. For $\l=3$, we recall that the canonical Ramsey threshold 
	is indeed smaller than the Ramsey threshold~$n^{-1/2}$. In fact, one can check that every edge colouring 
	of~$K_4$ yields a canonical copy of the triangle and, hence, $\hp_{K_3}\leq n^{-2/3}$.  We are unaware of the corresponding lower bound on $\hp_{K_3}$. 
	
	Moreover, we note that for $p=O(n^{-\frac{2}{\l+1}})$
	the random graph~$G(n,p)$ is likely to contain only $o(pn^2)$ cliques $K_{\l+1}$. 
	In the proof of Theorem~\ref{t:main} we can delete an edge from each 
	such clique. Consequently, we obtain the following statement in structural Ramsey theory, which can be viewed as a
	Folkman-type extension of the Erd\H os--Rado theorem for graphs.
	\begin{cor}\label{c:structure}
		For every $\l \geq 4$ there exists a $K_{\l+1}$-free graph~$G$ such that $G\clra(K_\l)$.
		Moreover, $G$ contains no two distinct copies of $K_\l$ that share at least three vertices.
	\end{cor}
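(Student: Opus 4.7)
The plan is to obtain $G$ as a suitable subgraph of the random graph $G_0 = G(n, p)$ with $p = C n^{-2/(\l+1)}$, where $C$ is large enough for Theorem~\ref{t:main} to apply. I would call a subgraph of $G_0$ \emph{bad} if it is either a copy of $K_{\l+1}$ or a pair of copies of $K_\l$ sharing at least three vertices, select one edge from each bad subgraph, collect these edges into a set $B$, and set $G = G_0 \setminus B$. By construction $G$ is $K_{\l+1}$-free and contains no two $K_\l$-copies sharing three or more vertices, so the task reduces to verifying that $G \clra (K_\l)$ still holds.

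A first-moment calculation shows $|B| = o(pn^2)$ with high probability. For every $j \in \{3, \dots, \l-1\}$, the expected number of unordered pairs of $K_\l$-copies in $G_0$ sharing exactly $j$ vertices is of order
\[
n^{2\l - j}\, p^{2\binom{\l}{2} - \binom{j}{2}}.
\]
Substituting $p = \Theta(n^{-2/(\l+1)})$ and comparing with $pn^2 = \Theta(n^{2\l/(\l+1)})$, one checks for $\l \geq 4$ that each such quantity is smaller than $pn^2$ by a polynomial factor; the exponents are minimised at $j = 3$ and $j = \l-1$, both of which yield a saving of $n^{(\l-3)/(\l+1)}$. Likewise, the expected number of $K_{\l+1}$-copies in $G_0$ equals $\Theta(n^{\l+1} p^{\binom{\l+1}{2}}) = \Theta(n)$, which is also $o(pn^2)$. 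Markov's inequality then gives $|B| = o(pn^2)$ w.h.p.

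The remaining and main step is to verify that deleting the $o(pn^2)$ edges of $B$ preserves the canonical Ramsey property. This would follow from a supersaturated version of Theorem~\ref{t:main}: with high probability every edge colouring of $G_0$ admits at least $\delta\, p^{\binom{\l}{2}} n^\l$ canonical copies of $K_\l$, for some $\delta = \delta(\l, C) > 0$, and w.h.p.\ these canonical copies are spread sufficiently evenly over the edges of $G_0$ that removing any $o(pn^2)$ edges destroys only a vanishing fraction of them under each colouring. The main obstacle lies precisely in making this supersaturation and uniformity quantitative; fortunately, transference arguments of the Conlon--Gowers type used in the proof of Theorem~\ref{t:main} naturally deliver such refinements as byproducts, so the task should reduce to careful bookkeeping rather than requiring new ideas.
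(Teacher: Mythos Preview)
Your proposal is correct and matches the paper's approach: the paper defines the \emph{$\l$-clean subgraph} $G_\l\subseteq G(n,p)$ by deleting one edge from each pair of $K_\l$'s sharing at least three vertices (this already forces $K_{\l+1}$-freeness, so your separate count of $K_{\l+1}$'s is redundant), verifies $|E(G)\setminus E(G_\l)|=o(pn^2)$ by the same first-moment computation, and carries out precisely the bookkeeping you anticipate by building a ``moreover'' clause into each of the two main lemmata asserting that the canonical copy already lies in $G_\l$. The robustness step works exactly as you suspect: the transference-based arguments yield $\Omega\bigl(p^{\binom{\l}{2}}n^\l\bigr)$ canonical copies, while deleting $o(pn^2)$ edges destroys only $o\bigl(p^{\binom{\l}{2}}n^\l\bigr)$ copies of $K_\l$ in total.
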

	In the context of Ramsey's theorem, the existence of such a graph~$G$  was asked for by Erd\H os and Hajnal~\cite{eh67}; for two colours
	this was established by Folkman~\cite{f70}, and for any fixed number of colours by Ne\v set\v ril and R\"odl~\cite{nr76}.
	The graph~$G$ in Corollary~\ref{c:structure} will be obtained by modifying the random graph and, hence, the proof is non-constructive.
	Reiher and R\"odl~\cite{rr} pointed out 
	that the first part of Corollary~\ref{c:structure} can also be proved in a constructive manner
	by means of the \emph{partite construction method} of Ne\v set\v ril and R\"odl~\cite{nr81}.
	While this approach falls short to exclude $K_\l$'s intersecting in triangles, it has the 
	advantage that it readily extends to $k$-uniform hypergraphs for every $k\geq 3$.

	We conclude this introduction with a short overview of the main ideas of the proof of Theorem~\ref{t:main}.
	Roughly speaking, the proof is inspired by the proof of the canonical graph Ramsey theorem laid out by  
	Lefmann and R\"odl~\cite{lr95} and Alon et al.~\cite{ajmp03}.
	This approach pivots on a case distinction of the edge colouring of the 
	underlying graph $K_n$. The first case, when many different colours appear everywhere, which is captured by assuming 
	that every vertex is incident to only $o(n)$ edges of the same colour, leads to rainbow copies of~$K_\l$.
	In the other case, there is a vertex with a monochromatic neighbourhood of size~$\Omega(n)$, which by iterated applications, 
	as in the standard proof of Ramsey's theorem, leads to a non-strictly min- or max-coloured $K_{(\l-2)^2+2}$. Such a non-strictly min/max-coloured clique 
	contains a canonical~$K_\l$ by a straightforward application of Dirichlet's box principle. 
	
	Transferring such an approach from $K_n$ to $G(n,p)$ for $p=O(n^{-\frac{2}{\l+1}})$ faces several challenges. Firstly, we shall not use a 
	$K_{(\l-2)^2+2}$ in the coloured host graph, as such large cliques are extremely unlikely to appear in $G(n,p)$
	for that edge probability.  Moreover, in the more challenging second case, when the colouring is unbounded, it 
	is certainly not sufficient to consider one vertex with a large 
	monochromatic neighbourhood (of size $\Omega(pn)$), as again, this neighbourhood is too sparse to contain any useful structure in $\Gnp$. Thus we resort to a robust 
	version of the above-mentioned argument, building a large non-strictly min- or max-coloured subgraph which contains $\Omega(n^2p)$ edges.

	The bounded case, with at most $\lambda$ edges of every colour incident to any given vertex of~$\Gnp$, is a problem of independent interest.
	For example,  $\lambda=1$ corresponds to studying proper edge colourings 
	of $G(n,p)$ and anti-Ramsey properties (see, e.g.~\cites{kkm14,npss17,kmps19} and the references therein). In fact, for $\ell \geq 5$, there 
	are proper colourings of $\Gnp$ with~$p = cn^{-\frac{2}{\ell+1}}$ which do not contain a rainbow copy of $K_{\ell}$ (see~\cite{kmps19}), which 
	is an alternative argument for~$\hat p_{K_\ell} \geq cn^{-\frac{2}{\ell+1}}$ and another obstruction for Theorem~\ref{t:main}. 
	For the proof of Theorem~\ref{t:main} presented here, we will need to guarantee rainbow copies of $K_\l$ under the weaker assumption that $\lambda = o(pn)$. 
	This can be viewed as a partial
	extension of the work of Kohayakawa, Kostadinidis, and Mota~\cite{kkm14}. In both cases (bounded and unbounded colourings)
	the transference principle for random discrete structures developed by Conlon and Gowers~\cite{cg16} is an integral part of the proof.
	
	\subsection*{Organisation}
	
	In the next section, we present the two main 
	lemmata rendering the case distinction sketched above, and deduce Theorem~\ref{t:main}. The proofs of these lemmata are deferred to Sections~\ref{sec:rainbow} 
	and~\ref{sec:strictly-min}, along with the corresponding preliminaries.
	We conclude with a discussion of possible generalisations of this work from cliques~$K_\l$ to general graphs~$H$ and of related open problems in Section~\ref{sec:conclusion}.

	\section{Proof of the main result}
	\label{sec:outline}
	
	\subsection{Proof of the canonical Ramsey theorem for graphs}
	The proof of Theorem~\ref{t:main} adopts some ideas of the canonical Ramsey theorem for graphs from the 
	work of Lefmann and R\"odl~\cite{lr95} and Alon et al.~\cite{ajmp03} and below we recall their argument.
	For $\l\geq 3$ we fix
	\begin{equation}\label{eq:cRamsey-det}
		\delta=\frac{1}{4\l^{3}}
		\qand
		n\geq 2^{6\l^2(\log_2(\l)+1)}
	\end{equation}
	and first we consider bounded colourings $\phi\colon E(K_n)\to\NN$. More precisely, we say such a colouring is \emph{$\delta$-bounded}
	if for every colour $c\in \NN$ and every vertex $v\in V(K_n)$ we have
	\begin{equation}
    \label{eq:delta-bounded}
	d_c(v)=\big|N_c(v)\big|=\big|\{w\in V(K_n)\colon \phi(vw)=c\}\big|\leq \delta n\,.
	\end{equation} 
	Roughly speaking, bounded colourings have the property that many different colours are
	``present everywhere'' and this yields rainbow copies of $K_\l$.
	In fact, a simple counting argument shows that for $\delta$-bounded colourings, at most $\delta n^3/2$ triples contain two edges of the same colour
	and  at most $\delta n^4/8$ quadruples contain two disjoint edges of the same colour.
	Consequently, selecting every vertex of~$K_n$ independently with 
	probability~$2\l/n$ and removing a vertex from every such triple and every such quadruple,
	establishes the existence of $\l$ vertices inducing a rainbow $K_\l$.
	
	The second part of the proof resembles the standard proof of Ramsey's theorem for graphs 
	and iterates along large monochromatic neighbourhoods. Given the observation above for bounded colourings, 
	we 	may assume that the edge colouring $\phi$ is unbounded in a hereditary way (meaning that no induced subgraph of order $\Omega(n)$ satisfies~\eqref{eq:delta-bounded}) and this requires the exponential
	lower bound~\eqref{eq:cRamsey-det} on $n$.
	
	More precisely, assuming that $\phi$ fails to induce a rainbow copy of $K_\l$ gives rise to a vertex $v\in V(K_n)$, 
	a colour $c$, and a comparability sign $\diamond\in\{<\,,>\}$ such that 
	\[
	d^{\diamond}_{c}(v)
	=
	\big|N^{\diamond}_{c}(v)\big|
	=
	\big|\{w\in V(K_n)\colon \phi(vw)=c \tand v\diamond w\}\big| 
	> 
	\frac{\delta n}{2}\,.
	\]
	Restricting our attention to the colouring $\phi$ on the vertices contained in $N^{\diamond}_{c}(v)$ and iterating this argument $L=2(\l-2)^2+2$ times 
	leads to a sequence $(v_i,c_i,\diamond_i)_{i\in [L]}$ such that for every~$i\in[L]$ we have
	\begin{equation}\label{eq:iteration-dense}
		\bigg|\bigcap_{j=1}^{i}N^{\diamond_j}_{c_j}(v_j)\bigg|
		>
		\Big(\frac{\delta}{2}\Big)^i n\,.
	\end{equation}
	In fact, owing to the choices in~\eqref{eq:cRamsey-det} we can iterate this step~$L$ times. 
	
	Furthermore, we may assume that there are indices 
	$1\leq i_0<\dots<i_{(\l-2)^2}< L$ such that~$\diamond_{i_j}$ is $<$ for all~$j$. Consequently, 
	the correspondingly indexed vertices $v_{i_0},\dots,v_{i_{(\l-2)^2}}$ together with $v_L$ 
	induce a non-strictly min-coloured
	clique on $(\l-2)^2+2$ vertices. Finally, if one of the colours appears $\l-1$ times among 
	$c_{i_0},\dots,c_{i_{(\l-2)^2}}$, then this yields a monochromatic~$K_\l$  among $v_{i_0},\dots,v_{i_{(\l-2)^2}}$, and $v_L$.
	Otherwise, at least $\l-1$ distinct colours appear and we are guaranteed to find a min-coloured~$K_\l$ instead.
	
	\subsection{Bounded and unbounded colourings in random graphs}
	For the proof of Theorem~\ref{t:main} we derive appropriate random versions 
	of the facts above that analyse bounded and unbounded colourings of~$G(n,p)$
	(see Lemmata~\ref{l:rainbow} and~\ref{l:strictly-min} below). We begin by defining a notion of boundedness central to our proof.
	Roughly speaking, an edge colouring of $G(n,p)$ is bounded if at most $o(pn)$ edges of the same colour are incident to any given vertex. 
	Similar to the proof in the deterministic setting, it will be useful to define this property for large subsets of vertices, which 
	is made precise as follows. 
	
	Given a graph $G=(V,E)$ with an edge colouring $\phi\colon E\to\NN$, 
	a subset $U\subseteq V$, and reals $\delta>0$, $p\in(0,1]$ we say $\phi$ is \emph{$(\delta,p)$-bounded on $U$} if 
	for every colour $c\in \NN$ and every vertex $u\in U$ we have 
	\[
	d_c(u,U)=\big|N_c(u,U)\big|=\big|\{w\in U\colon \phi(uw)=c\}\big|\leq \delta p|U|\,.
	\]
	Lemma~\ref{l:rainbow} (stated below) asserts that bounded edge colourings of $G(n,p)$ for $p\gg n^{-\frac{2}{\l+1}}$ 
	yield rainbow copies of $K_\l$ asymptotically almost surely, i.e., with probability tending to $1$ as $n\to\infty$. 
	
	In view of Corollary~\ref{c:structure}, we define the \emph{$\l$-clean} subgraph $G_\l$ of a given graph $G$ on $[n]$ 
	as follows: Consider all edges of $G$ in lexicographic order and remove an edge $e$ in the current subgraph of $G$, if the edge $e$
	is contained in the intersection of two distinct $K_\l$-copies sharing at least three vertices. Actually, the precise ordering is not 
	relevant for our argument, but the uniqueness of the $\l$-clean subgraph $G_\l\subseteq G$ defined above will be convenient. Note that $G_\l$ contains no copy of $K_{\l+1}$, since this would yield two $K_\l$'s intersecting in $\l-1$ vertices.

	\begin{lemma} 
		\label{l:rainbow} 
		For all integers $\l\geq 4$ and every $\nu>0$ there is some constant $C>0$ such that for 
		$p=p(n)\geq Cn^{-\frac{2}{\l+1}}$ asymptotically almost surely the following holds for $G\in G(n,p)$.
		
		If $\phi\colon E(G)\to \NN$ is $(\l^{-5}/4,p)$-bounded on some $U\subseteq V(G)$ with $|U|\geq \nu n$, 
		then $U$ induces a rainbow copy of $K_\l$ in $G$.
		
		Moreover, if in addition we have $p(n)\leq n^{-\frac{2\l-5}{\l^2-\l-4}}/\omega(n)$ for some arbitrary function~$\omega$
		tending to infinity
		as $n \to \infty$, then the $\l$-clean subgraph $G_\l\subseteq G$ also contains a rainbow copy of~$K_\l$. 
	\end{lemma}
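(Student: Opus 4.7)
The plan combines a deterministic supersaturation count for rainbow $K_\ell$'s with the Conlon--Gowers transference principle~\cite{cg16}, and then handles the $\ell$-cleaning via a first-moment calculation at the $2$-density-based threshold. In the dense setting, I would first prove that every $\delta$-bounded edge colouring of $K_m$ with $\delta = \ell^{-5}/4$ admits $\Omega(m^\ell)$ rainbow copies of $K_\ell$: the boundedness yields at most $\delta m^3/2$ monochromatic cherries and at most $\delta m^4/8$ monochromatic matchings, each extending in at most $O(m^{\ell-3})$ (respectively $O(m^{\ell-4})$) ways to a copy of $K_\ell$, so non-rainbow $K_\ell$'s number $O_\ell(\delta m^\ell)$, a small fraction of $\binom{m}{\ell}$. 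This is a supersaturated form of the argument recalled in Section~\ref{sec:outline}.

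To lift this counting to the sparse random graph, I would invoke the Conlon--Gowers transference principle for the clique $K_\ell$, whose $2$-density $(\ell+1)/2$ matches the threshold $Cn^{-2/(\ell+1)}$. The outcome is that a.a.s.\ every $(\delta,p)$-bounded colouring $\phi$ of $G[U]$ on any $U \subseteq V(G)$ with $|U| \geq \nu n$ contains at least $c(\ell,\nu)\, p^{\binom{\ell}{2}} n^\ell$ rainbow copies of $K_\ell$, which settles the first part of the lemma. This transference step is the main obstacle: the Conlon--Gowers framework is normally phrased for Ramsey-type properties with a bounded palette, while our hypothesis allows unboundedly many colour classes. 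The $(\delta,p)$-bounded assumption plays the role of a pseudo-regularity condition, turning the forbidden rainbow-free patterns (same-colour cherries and matchings) into fixed-complexity, bounded-degree $3$- and $4$-uniform hypergraphs that fit the sparse counting/container framework of~\cite{cg16}.

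For the moreover part, I would show that the $\ell$-cleaning destroys only $o(p^{\binom{\ell}{2}} n^\ell)$ of these rainbow copies, so at least one survives in $G_\ell$. The exponent $(2\ell-2)/(\ell^2+\ell-4)$ equals $1/d_2(K_{\ell+1}-e)$, the $2$-density-based threshold associated to the critical ``bad'' configuration consisting of two $K_\ell$'s overlapping in $\ell-1$ vertices, i.e.\ a $K_{\ell+1}$ missing one edge. A first-moment calculation over ordered pairs of distinct $K_\ell$'s sharing $k$ vertices, for each $k \in \{3, \ldots, \ell-1\}$, together with a standard codegree estimate for the number of $K_\ell$'s through a given edge in $G(n,p)$, shows that under $p \leq n^{-(2\ell-2)/(\ell^2+\ell-4)}/\omega(n)$ the number of rainbow $K_\ell$'s containing a removed edge is $o(p^{\binom{\ell}{2}} n^\ell)$. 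Markov's inequality then guarantees that a.a.s.\ at least one rainbow $K_\ell$ survives in $G_\ell$, completing the proof.
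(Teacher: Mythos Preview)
Your overall architecture is right and matches the paper: a dense supersaturation count for rainbow $K_\ell$'s, transferred to $G(n,p)$ via Conlon--Gowers, followed by a first-moment bound on overlapping $K_\ell$-pairs for the $\ell$-clean part. The dense count and the moreover part are fine and essentially what the paper does.

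The gap is in the transference step, precisely at the point you flag as the main obstacle. Saying that the $(\delta,p)$-boundedness ``turns the forbidden rainbow-free patterns into fixed-complexity, bounded-degree $3$- and $4$-uniform hypergraphs that fit the sparse counting/container framework'' is not an argument. The Conlon--Gowers machinery in~\cite{cg16} gives, a.a.s., for \emph{every subgraph} $S\subseteq G$ a dense model with matching edge and $K_\ell$ counts. It does not directly count $K_\ell$'s containing a hyperedge of some adversarially-defined, colouring-dependent auxiliary hypergraph of monochromatic cherries or matchings. Since the colouring may use unboundedly many colours, you cannot simply sum over colour classes either, and an upper bound of the shape ``(\#\,monochromatic cherries)$\times$(max \#\,$K_\ell$-extensions of a cherry in $G(n,p)$)'' fails because individual cherries can have far more extensions than average.

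The paper's resolution is a concrete trick you are missing: after passing to an $\ell$-partite setup $U_1,\dots,U_\ell$, one proves a tailored upper-bound lemma (their Lemma~\ref{l:upper-rainbow}) bounding $\kappa_\ell(S)$ for \emph{any} $\ell$-partite $S\subseteq G[\cU]$ in terms of the density of a designated matching pair or cherry. Then, using only the $(\delta,p)$-boundedness, one greedily partitions the colour set into $r\le 2/\delta$ clusters $C_1,\dots,C_r$ so that each cluster contributes at most $2\delta p m^2$ edges between the relevant parts (for the cherry case this partition is even chosen vertex by vertex). Applying the upper-bound lemma once per cluster and summing $r=O(1)$ terms yields $\kappa_\ell^{\ee},\kappa_\ell^{\ep}\le O(\delta)\,p^{\binom{\ell}{2}}m^\ell$. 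This colour-clustering is the missing idea that reduces an unbounded palette to boundedly many applications of the transference-based counting lemma.
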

	Lemma~\ref{l:rainbow} strengthens a result of Kohayakawa, Kostadinidis, and Mota~\cite{kkm14}, where a more restrictive boundedness assumption
	is required. The proof of Lemma~\ref{l:rainbow} makes use of the transference principle of Conlon and Gowers~\cite{cg16}, which 
	allows us to transfer the bounded case in the deterministic setting to the random environment. We defer the proof of Lemma~\ref{l:rainbow} 
	to Section~\ref{sec:rainbow}. The second lemma yields canonical copies in unbounded colourings.

	\begin{lemma}
		\label{l:strictly-min}
		For all integers $\l\geq 3$ and every $\delta>0$ there is some constant $C>0$ such that for 
		$p=p(n)\geq Cn^{-\frac{2}{\l+1}}$ asymptotically almost surely the following holds for $G\in G(n,p)$.
		
		If $\phi\colon E(G)\to \NN$
		has the property that every $U\subseteq V(G)$ with size $|U|\geq \delta^{5 \l^2} n$
		satisfies
		\begin{equation}\label{eq:delta-unbounded-undir}
			\big|\{u \in U\colon d_c(u, U) \geq 8\delta p |U| \text{ for some colour } c\} \big|
			\geq 
			\frac{|U|}{2}\,,
		\end{equation}
		then $G$ contains a canonical copy of $K_\l$.
		
		Moreover, if in addition we have $p(n)\leq n^{-\frac{2\l-5}{\l^2-\l-4}}/\omega(n)$ for some arbitrary function~$\omega$ 
		tending to infinity
		as $n \to \infty$, then the $\l$-clean subgraph $G_\l\subseteq G$ also contains a rainbow copy of~$K_\l$.
	\end{lemma}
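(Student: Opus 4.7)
My plan is to exploit the unbounded hypothesis to build a large non‑strictly min‑coloured (or, by a symmetric case, max‑coloured) subgraph $H\subseteq G$, to locate many copies of~$K_\l$ inside~$H$ by transferring the dense $K_\l$‑counting lemma to the random setting via the Conlon–Gowers framework~\cite{cg16}, and finally to extract a canonical $K_\l$ by pigeonholing on the vertex colouring $u\mapsto c_u$ underlying~$H$. For the moreover clause I would combine the observation that in the restricted $p$‑regime the $\l$‑cleaning removes only a negligible fraction of the $K_\l$'s counted above with a contradiction argument invoking Lemma~\ref{l:rainbow} to deliver a rainbow copy inside~$G_\l$.

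\textbf{Construction of $H$.} Applying the hypothesis with $U=V(G)$ gives at least $n/2$ vertices $u$ admitting a heavy colour $c_u$ with $d_{c_u}(u,V)\geq 8\delta pn$. A comparability pigeonhole extracts a set $V_\star$ of size at least $n/4$ whose members~$u$ satisfy $|\{v>u:\phi(uv)=c_u\}|\geq 4\delta pn$ (the backward case is symmetric and produces a max‑coloured variant). Setting
\[
H=\bigcup_{u\in V_\star}\bl\{uv\in E(G):v>u,\ \phi(uv)=c_u\br\}\subseteq E(G),
\]
one has $|H|=\Omega(pn^2)$, and since the colour of every edge of~$H$ is determined by its smaller endpoint, $H$ is non‑strictly min‑coloured under~$\phi$.

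\textbf{Counting and extraction.} Because $|H|=\Omega(p\binom{n}{2})$ and $p\geq Cn^{-2/(\l+1)}$, the Conlon–Gowers transference (equivalently the KŁR counting theorem for~$K_\l$) asymptotically almost surely yields $\Omega(p^{\binom{\l}{2}}n^\l)$ copies of~$K_\l$ in~$H$, each with edge‑colour tuple $(c_{v_1},\dots,c_{v_{\l-1}})$. Pigeonholing $u\mapsto c_u$ on~$V_\star$ now splits the analysis. In the \emph{concentrated} case some colour class $V_c=\{u\in V_\star:c_u=c\}$ has size $\Omega(n)$; re‑applying the hypothesis to~$U=V_c$ produces $\Omega(p|V_c|^2)$ edges of colour~$c$ inside~$V_c$, and another round of transference extracts a monochromatic~$K_\l$. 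In the \emph{spread} case every colour class has size $o(n)$, so a transferred uniform counting bound shows that the copies of~$K_\l$ in~$H$ having two vertices in a common colour class account for only $o(p^{\binom{\l}{2}}n^\l)$ copies, and any other copy of~$K_\l$ in~$H$ has pairwise distinct $c_{v_i}$'s, i.e.\ is strictly min‑coloured. The main obstacle here is ensuring the transferred counting applies uniformly to the colour‑measurable subgraph~$H$, which is precisely what~\cite{cg16} supplies at $p\gg n^{-2/(\l+1)}$.

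\textbf{The moreover clause.} The bound $p\leq n^{-(2\l-2)/(\l^2+\l-4)}/\omega(n)$ makes the expected number of pairs of $K_\l$'s sharing at least three vertices a $o(1)$‑fraction of the expected $K_\l$‑count, so $\l$‑cleaning deletes only a vanishing proportion of the copies of~$K_\l$ found in~$H$ and every structural count above persists in~$H\cap G_\l$. However, no $K_\l$ lying entirely in~$H$ can itself be rainbow, since its $\binom{\l}{2}$ edges take at most $\l-1$ distinct colours, so the rainbow copy in~$G_\l$ cannot be produced inside~$H$ and has to come from a contradiction argument. Assuming $G_\l$ contains no rainbow~$K_\l$, the contrapositive of Lemma~\ref{l:rainbow} yields, for every $U\subseteq V(G)$ with $|U|\geq\nu n$, a vertex of~$U$ carrying a colour of $U$‑degree at least $(\l^{-5}/4)p|U|$; iterating this obstruction along the min‑coloured sequence built on~$V_\star$, and using that $G_\l$ is $K_{\l+1}$‑free, produces inside~$G_\l$ a deterministic coloured configuration whose canonical Ramsey analysis in the spirit of~\cite{lr95,ajmp03} must contain a rainbow~$K_\l$, contradicting the standing assumption. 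The hardest step will be making this iteration quantitative enough to fit the restricted $p$‑scale—balancing the depth of iteration against the vanishing rate of the configurations cleaned by the $\l$‑clean operation.
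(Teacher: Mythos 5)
Your high-level plan (build a non-strictly min-coloured subgraph, count its $K_\l$'s via transference, then pigeonhole on the vertex colours $u\mapsto c_u$) matches the spirit of the paper, but the central counting step is broken, and this is exactly the difficulty the paper's proof is organised around. From a single application of the hypothesis with $U=V(G)$ you obtain a graph $H$ with $|E(H)|=\Omega(\delta pn^2)$, i.e.\ relative density $\Omega(\delta)$ inside $G(n,p)$. Transference then gives a dense model $D$ of $H$ with edge density $\Omega(\delta)$ --- but a dense graph of density $\Omega(\delta)\ll 1-\tfrac{1}{\l-1}$ need not contain a single copy of $K_\l$ (Tur\'an's theorem is the obstruction), let alone $\Omega(n^\l)$ of them. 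Neither the Conlon--Gowers principle nor the K\L R counting lemma converts an edge count alone into a $K_\l$ count; they compare the $K_\l$-count of $H$ with that of its dense model, which here could be zero. The same objection defeats your ``concentrated case'': $\Omega(p|V_c|^2)$ edges of one colour do not yield a monochromatic $K_\l$. The paper avoids this by an iterated construction (Claim~4.6): it builds $L=2(\l-1)(\l-2)+2$ graphs $F_1,\dots,F_L$ together with dense models $D_1,\dots,D_L$, tracking a hypergraph of tuples $(v_1,\dots,v_t)$ with \emph{nested linear-sized neighbourhoods} $S(v_1,\dots,v_t)$ in the dense models; the $\Omega(n^\l)$ cliques in $\bigcup_i D_i$ come from this explicit nesting, not from a density argument, and only then does part~(ii) of the transference theorem pull the count back to $\bigcup_i F_i$. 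Your proposal contains no substitute for this mechanism, so the claim ``$H$ contains $\Omega(p^{\binom{\l}{2}}n^\l)$ copies of $K_\l$'' is unsupported and in general false.

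Two further points. First, the majority/spread dichotomy also has to be enforced \emph{level by level} during the construction (the paper's colour index $\psi_t\in\NN\cup\{\star\}$ and the cluster-counting bound~\eqref{eq:vertex-conflict}); applied only once to $V_\star$ it does not control colour collisions between the $\l-1$ distinct ``levels'' of a copy of $K_\l$. Second, the moreover clause does not require any contradiction argument with Lemma~\ref{l:rainbow}: the upper bound on $p$ gives $|E(G)\setminus E(G_\l)|\le\zeta pn^2$ by Markov, the unboundedness hypothesis survives the deletion of these few edges after a slight change of constants, and the whole construction is simply carried out inside $G_\l$. Your proposed iteration of the contrapositive of Lemma~\ref{l:rainbow} is both unnecessary and too vague to verify.
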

	As in the unbounded case in the deterministic setting, the proof of Lemma~\ref{l:strictly-min} yields
	either a monochromatic, or a min-coloured, or a max-coloured copy of $K_\l$. The proof of Lemma~\ref{l:strictly-min} is more 
	involved and we give a detailed outline in Section~\ref{sec:outline-unbounded}.
	We conclude this section with the short proof of Theorem~\ref{t:main} and Corollary~\ref{c:structure}
	based on Lemmata~\ref{l:rainbow} and~\ref{l:strictly-min}.
	
	\begin{proof}[Proof of Theorem~\ref{t:main} and Corollary~\ref{c:structure}]
		Given $\l\geq 4$ we set $\delta=\l^{-5}/64$ and $\nu=\delta^{5 \l^2}/2$
		and let $C$ be sufficiently large so that we can appeal to Lemma~\ref{l:rainbow} with $\l$ and $\nu$
		and to Lemma~\ref{l:strictly-min} with~$\l$ and~$\delta$.
		Owing to the monotonicity of the canonical Ramsey property, for the proof of Theorem~\ref{t:main} we may assume 
		$p=p(n)=Cn^{-\frac{2}{\l+1}}$. Since $\l\geq 4$ this implies $p(n)\leq n^{-\frac{2\l-5}{\l^2-\l-4}}/\omega(n)$ for some function $\omega$
		tending to infinity with~$n$.

		Let $G\in G(n,p)$ satisfy the conclusion of both lemmata
		and consider an arbitrary edge colouring  $\phi\colon E(G)\to \NN$ 
		of~$G$.

		For every $U\subseteq V(G)$ we consider its subset of \textit{unbounded} vertices in $U$
		\[
		B(U)=\{w\in U\colon d_c(w, U) \geq 8\delta p |U| \text{ for some colour } c\}\,.
		\]
		Owing to Lemma~\ref{l:strictly-min} we may assume that there is a set $U\subseteq V(G)$ satisfying
		$|U|\geq \delta^{5 \l^2} n$ and $|B(U)| < |U|/2$.
		Removing the unbounded vertices from~$U$ we arrive at a set
		\[
		U' 
		= 
		U \setminus B(U)
		\quad\text{with}\quad
		|U'|
		>
		\frac{|U|}{2}
		\geq 
		\nu n\,.
		\] 
		For every vertex $u \in U'$ and every colour $c$ we have
		\[
		d_c(u, U') 
		\leq 
		d_c(u, U) 
		<
		8\delta p |U| 
		<
		16\delta p |U'|\,.
		\]
		In other words, $\phi$ is $(16\delta,p)$-bounded on $U'$ and Lemma~\ref{l:rainbow} yields asymptotically almost surely a 
		rainbow copy of~$K_\ell$ in the $\l$-clean subgraph $G_\l\subseteq G$.
	\end{proof}

	\section{Rainbow cliques in bounded colourings of random graphs} 
	\label{sec:rainbow}
	We shall use the following notation.
	For a graph $G=(V,E)$, a vertex $v$, and a set $U\subseteq V$ we write $d_G(v,U)$ for the size of the neighbourhood of $v$ in $U$.
	For subsets $X$, $Y\subseteq V$ we denote by $e_G(X,Y)$ the number of edges with one vertex in $X$ and one vertex in $Y$, where edges in 
	$X\cap Y$ are counted twice, i.e., 
	\[	
	e_G(X,Y)=\big|\{(x,y)\in X\times Y\colon xy\in E\}\big|=\sum_{x\in X}d_G(x,Y)\,.
	\]
    In particular, $e_G(X) = e_G(X, X)$.
	Moreover, for some integer $\l\geq 2$ we denote by $\kappa_\l(G)$ the number of (labeled) copies of~$K_\l$ in $G$.
	For a family $\cU=(U_1,\dots,U_\l)$ of mutually disjoint vertex subsets of $V$
	we write $G[\cU]$ for the $\l$-partite subgraph induced by the sets $U_1,\dots,U_\l$.

	The proof of Lemma~\ref{l:rainbow} is based on the transference principle of Conlon and Gowers~\cite{cg16}, 
	which we use in the following form~\cite{cgss14}*{Theorem~3.2}.
	\begin{thm}[Conlon \& Gowers]
		\label{t:transference1}
		For all integers $\ell\geq 3$ and every $\eps>0$ there is some constant $C>0$ such that for $p=p(n)$ with $Cn^{-\frac{2}{\l+1}}\leq p\leq 1/C$ 
		and every $\zeta>0$ asymptotically almost surely the following holds for $G\in\Gnp$.
		
		For every family $\cU=(U_1,\dots,U_\l)$ of mutually disjoint vertex subsets of $V(G)$
		and every $\l$-partite subgraph~$S$ of $G[\cU]$, there exists an $\l$-partite  
		subgraph $D$ of $K_n[\cU]$ such that
		\begin{enumerate}[label=\rmlabel]
			\item\label{it:trans1-1} for all subsets $X$, $Y\subseteq V(G)$ we have $|e_S(X,Y)-p\cdot e_D(X,Y)|\leq \eps pn^2$
			\item\label{it:trans1-2} and $\big|\kappa_\l(S)-p^{\binom{\l}{2}}\cdot\kappa_\l(D)\big|\leq \eps p^{\binom{\l}{2}} n^\l$.
		\end{enumerate}
		Moreover, we have
		\begin{enumerate}[label=\rmlabel]\pushQED{\qed}
			\setcounter{enumi}{2}
			\item\label{it:trans1-1deg} for every $X\subseteq V(G)$ all but at most $\eps n$ vertices $v\in V(G)$ satisfy 
			\[
			|d_S(v,X)-p\cdot d_D(v,X)|\leq \eps pn
			\]
			\item\label{it:trans1-3} and if $G'$ is obtained from $G$ by removing at most $\zeta pn^2$ edges, then
			\[
			\big|\kappa_\l(G'[\cU])-p^{\binom{\l}{2}}|U_1|\cdots|U_\l|\big|\leq (\eps+\zeta) p^{\binom{\l}{2}} n^\l\,.\qedhere
			\]
		\end{enumerate}
	\end{thm}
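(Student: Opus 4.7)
The plan is to invoke the sparse transference framework pioneered by Conlon and Gowers, essentially proving the theorem by using a dense model together with pseudorandomness properties of $G(n,p)$. Since $p\geq Cn^{-2/(\l+1)}$, the random graph $G(n,p)$ is, with high probability, pseudorandom at the scale required by the $K_\ell$ counting lemma in the KŁR framework; in particular, standard Chernoff and union bound arguments yield that for every pair of disjoint sets $X,Y$ of size at least $\eps n/\ell$ the edge count satisfies $e_G(X,Y)=(1\pm\eps/4)p|X||Y|$, and that $G$ contains the "right" number of copies of small subgraphs with high probability.

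To prove \ref{it:trans1-1} and \ref{it:trans1-1deg}, first I would produce the dense model $D$ from $S$ via a dense model theorem (as in Green--Tao, adapted by Conlon--Gowers to the graph setting). Concretely, one applies a sparse regularity lemma to $S$ inside $G[\cU]$ to obtain a partition with most pairs $(p,G)$-regular of some density $\alpha_{ij}/p$, and defines $D$ on the same partition by assigning density $\alpha_{ij}$ to each pair. Because $G$ is pseudorandom, the cut-discrepancy between $S$ and $pD$ on any pair of vertex subsets is controlled, after a short computation using the regularity of the partition and Chernoff-type deviation bounds, by $\eps p n^2$; restricting to neighbourhoods of individual vertices and using a union bound over $X$ yields \ref{it:trans1-1deg}.

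For \ref{it:trans1-2} I would invoke the \emph{counting lemma in sparse random graphs}—the content of the KŁR conjecture, established in the applicable density range by Conlon--Gowers and by Schacht. This lemma says precisely that whenever $(p,G)$-regular pairs form a $K_\l$-blow-up with densities $\alpha_{ij}$, the number of $K_\ell$'s in~$S$ is $(1\pm o(1))p^{\binom\l 2}\prod\alpha_{ij}|U_i|$, i.e., $p^{\binom\l 2}$ times the $K_\l$-count in the corresponding portion of $D$. Summing over the partition yields the desired comparison $|\kappa_\l(S)-p^{\binom\l 2}\kappa_\l(D)|\leq \eps p^{\binom\l 2} n^\l$. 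Statement \ref{it:trans1-3} is then the specialisation of the same counting lemma to $S=G[\cU]$ itself, together with the robustness observation that removing $\zeta p n^2$ edges can destroy at most $\zeta p n^2\cdot O(p^{\binom\l 2-1}n^{\l-2})$ copies of $K_\l$, which gives the $(\eps+\zeta)p^{\binom\l 2}n^\l$ error.

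I expect the main obstacle to be \ref{it:trans1-2}: constructing a dense model that simultaneously captures edge densities \emph{and} $K_\l$-counts is the whole technical heart of the transference philosophy, and it requires the sparse counting lemma in its full strength, which in turn forces the lower bound $p\geq Cn^{-2/(\l+1)}$ (to control the dangerous subgraphs of~$K_\l$). Conditions \ref{it:trans1-1}, \ref{it:trans1-1deg}, and \ref{it:trans1-3} are comparatively soft consequences of pseudorandomness of $G(n,p)$ once $D$ is in hand.
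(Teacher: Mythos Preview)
Parts~\ref{it:trans1-1} and~\ref{it:trans1-2} are not proved in the paper at all---they are quoted as a black box from~\cite{cgss14}*{Theorem~3.2}. Your sparse-regularity-plus-K\L R sketch is a reasonable route to them, broadly in the spirit of how that reference proceeds. The only argument the paper actually supplies is the short deduction of the moreover-part, namely~\ref{it:trans1-1deg} and~\ref{it:trans1-3}, from~\ref{it:trans1-1}--\ref{it:trans1-2} reapplied with some $\eps'\ll\eps$, and on both of these your sketches miss. For~\ref{it:trans1-1deg}, ``restricting to neighbourhoods of individual vertices'' and a ``union bound over~$X$'' does not do the job: part~\ref{it:trans1-1} with $Y=\{v\}$ gives only the trivial bound $\eps pn^2$, and the conclusion is deterministic once~$G$, $S$, $D$ are fixed, with the exceptional set allowed to depend on~$X$, so no union bound is relevant. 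The paper instead fixes~$X$, sets $Y^+=\{v: d_S(v,X)-p\,d_D(v,X)>\eps pn\}$, and applies~\ref{it:trans1-1} to the pair $(X,Y^+)$ to obtain $|Y^+|\le\eps'n/\eps\le\eps n/2$, with a symmetric bound for~$Y^-$.

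Your argument for~\ref{it:trans1-3} has a genuine gap. The assertion that deleting $\zeta pn^2$ edges destroys at most $\zeta pn^2\cdot O\bigl(p^{\binom{\ell}{2}-1}n^{\ell-2}\bigr)$ copies of~$K_\ell$ presumes a uniform upper bound of that order on the number of $K_\ell$'s through each edge. At $p=Cn^{-2/(\ell+1)}$ this quantity is merely the \emph{average}, a constant in~$n$, while the maximum over edges is unbounded (for $\ell=3$, the maximum codegree in $G(n,p)$ is $\Theta(\log n/\log\log n)$); an adversary removing the heaviest $\zeta pn^2$ edges therefore destroys far more than your estimate allows, and the sharp coefficient~$\zeta$ cannot be recovered this way. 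The paper avoids edge-by-edge accounting altogether: it applies~\ref{it:trans1-1} and~\ref{it:trans1-2} directly to $S'=G'[\cU]$. Since Chernoff gives $e_G(U_i,U_j)\approx p|U_i||U_j|$ and only $\zeta pn^2$ edges were removed, part~\ref{it:trans1-1} forces the dense model~$D'$ to differ from $K_n[\cU]$ in at most $(2\eps'+\zeta)n^2$ edges, whence $\bigl|\kappa_\ell(D')-|U_1|\cdots|U_\ell|\bigr|\le(2\eps'+\zeta)n^\ell$, and~\ref{it:trans1-2} transfers this back to~$G'[\cU]$.
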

	We say the graph $D$ provided by Theorem~\ref{t:transference1} is a \emph{dense model} for the subgraph $S$ of the sparse random graph. We remark that in~\cite{cgss14}, (i) is stated for disjoint subsets $X$ and $Y$, but the bound for all pairs of subsets follows easily from the inclusion-exclusion principle. 
	Furthermore,  
	the moreover-part is not stated in~\cite{cgss14}*{Theorem~3.2}. However, it easily follows from~\ref{it:trans1-1} 
	and~\ref{it:trans1-2} applied with an appropriately chosen $\eps'\ll\eps$. 
	In fact, part~\ref{it:trans1-1deg} follows from~\ref{it:trans1-1} applied to $X$ and 
	$Y^+$ being the set of vertices having too high degree in $D$, and a second application to $X$ and a similarly defined set $Y^-$ (see, e.g., proof of Lemma~\ref{l:degree-cutnorm} in Section~\ref{sec:prelim}).
	Part~\ref{it:trans1-3} can be deduced by applying~\ref{it:trans1-1} and~\ref{it:trans1-2} for $S'=G'[\cU]$. In fact, for this choice, 
	part~\ref{it:trans1-1}, combined with Chernoff's inequality  and the union bound over the choices for $\cU$,  
	implies that all $\binom{\l}{2}$ bipartite subgraphs~$D'[U_i,U_j]$ have density close to $1$. More precisely, 
	in this case $D'[\cU]$ and $K_n[\cU]$ differ by at most $(2\eps'+\zeta)n^2$ edges.
	Consequently,~$\big|\kappa_\l(D')-|U_1|\cdots|U_\l|\big|\leq (2\eps'+\zeta)n^\l$, which can be transferred to $S'=G'[\cU]$ by~\ref{it:trans1-2}.
	The two conclusions of the following lemma further strengthen the upper bound of part~\ref{it:trans1-3} and can be viewed as a customised version 
	of part~\ref{it:trans1-2} for our proof of Lemma~\ref{l:rainbow}.
	
	\begin{lemma} \label{l:upper-rainbow}
		For all integers $\ell\geq 4$ and every $\eps>0$ there is some constant $C>0$ such that for $p=p(n)$ with $Cn^{-1/m_2(K_\l)}\leq p\leq 1/C$ 
		asymptotically almost surely the following holds for $G\in\Gnp$.
		
		For every family $\cU=(U_1,\dots,U_\l)$ of mutually disjoint vertex subsets of $V(G)$
		and every $\l$-partite subgraph~$S$ of $G[\cU]$ the following holds:
		\begin{enumerate}[label=\alabel]
			\item \label{it:matching-bound} For $d_{12}=\frac{e_S(U_1, U_2) }{p|U_1||U_2|}$ 
			and $d_{34}=\frac{e_S(U_3, U_4) }{p|U_3||U_4|}$ we have
			\[
			\kappa_\l(S)
			\leq
			d_{12}d_{34}\cdot p^{\binom{\l}{2}}|U_1|\cdots|U_\l|+\eps p^{\binom{\l}{2}}n^\l\,.	
			\]
			\item \label{it:cherry-bound} 
			For $c_{123}=\frac{\sum_{u\in U_1}d_S(u,U_2)d_S(u,U_3)}{p^2|U_1||U_2||U_3|}$ we have
			\[
			\kappa_\l(S)
			\leq
			c_{123}\cdot p^{\binom{\l}{2}}|U_1|\cdots|U_\l|+\eps p^{\binom{\l}{2}}n^\l\,.	
			\]
		\end{enumerate}
	\end{lemma}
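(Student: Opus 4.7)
The plan is to transfer both inequalities from the dense setting via Theorem~\ref{t:transference1}. Given $\eps>0$, apply that theorem with a parameter $\eps'\ll\eps$ to be fixed at the end, and additionally insist that $G\in\Gnp$ satisfies the standard a.a.s.\ Chernoff estimates $e_G(V(G),V(G))\leq 3pn^2$ and $d_G(v)\leq 2pn$ for every $v\in V(G)$, both of which hold with probability $1-o(1)$ since $pn\to\infty$. Fix any family $\cU=(U_1,\dots,U_\l)$ of mutually disjoint subsets and any $\l$-partite subgraph $S\subseteq G[\cU]$, and let $D\subseteq K_n[\cU]$ denote the dense model guaranteed by Theorem~\ref{t:transference1}.

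The key dense-graph observations are the two elementary inequalities
\[
\kappa_\l(D) \leq e_D(U_1,U_2)\cdot e_D(U_3,U_4)\cdot |U_5|\cdots|U_\l|
\qand
\kappa_\l(D) \leq \biggl(\sum_{v_1\in U_1} d_D(v_1,U_2)\,d_D(v_1,U_3)\biggr)\cdot|U_4|\cdots|U_\l|,
\]
obtained by noting that any copy of $K_\l$ in $D[\cU]$ must contain the two disjoint edges (respectively, the cherry centred at $v_1$), while the vertices in the remaining parts are chosen freely. Combined with part~\ref{it:trans1-2} of Theorem~\ref{t:transference1}, namely $\kappa_\l(S)\leq p^{\binom{\l}{2}}\kappa_\l(D)+\eps' p^{\binom{\l}{2}}n^\l$, the remaining task is to replace the edge counts and neighbourhood sizes in $D$ by their counterparts in $S$.

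For~\ref{it:matching-bound} the cut-norm conclusion~\ref{it:trans1-1} yields $e_D(U_i,U_j)\leq e_S(U_i,U_j)/p+\eps' n^2$. Expanding the product $e_D(U_1,U_2)\,e_D(U_3,U_4)$ and bounding the cross-terms using $e_S\leq e_G\leq 3pn^2$ a.a.s., one obtains
\[
e_D(U_1,U_2)\,e_D(U_3,U_4) \leq d_{12}d_{34}\,p^2|U_1||U_2||U_3||U_4| + O(\eps')\, n^4.
\]
Multiplying by $|U_5|\cdots|U_\l|$ and by $p^{\binom{\l}{2}}$ gives part~\ref{it:matching-bound}, provided $\eps'$ is chosen sufficiently small. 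Part~\ref{it:cherry-bound} is analogous but invokes the degree version~\ref{it:trans1-1deg}: for each of $X=U_2$ and $X=U_3$, all but $\eps' n$ vertices $v\in V(G)$ satisfy $d_D(v,X)\leq d_S(v,X)/p+\eps' n$. Summing $d_D(v_1,U_2)\,d_D(v_1,U_3)$ over $v_1\in U_1$, bounding the cross-terms via $d_S(v_1,U_i)\leq d_G(v_1)\leq 2pn$, and absorbing the at most $2\eps' n$ exceptional vertices through the crude bound $d_D(v_1,U_j)\leq n$, produces the estimate $\sum_{v_1\in U_1} d_D(v_1,U_2)\,d_D(v_1,U_3)\leq c_{123}|U_1||U_2||U_3|+O(\eps')n^3$, and the conclusion follows after multiplication by $|U_4|\cdots|U_\l|\cdot p^{\binom{\l}{2}}$.

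I do not anticipate any serious obstacle; the argument is essentially a combination of Theorem~\ref{t:transference1} with the elementary dense counting inequalities above, and the only care required is choosing $\eps'$ small enough as a function of $\eps$ so that the accumulated error terms are absorbed into $\eps p^{\binom{\l}{2}}n^\l$.
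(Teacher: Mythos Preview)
Your proposal is correct and follows essentially the same route as the paper: pass to the dense model $D$ from Theorem~\ref{t:transference1}, use the trivial counting bounds in $D$ (matching for~\ref{it:matching-bound}, cherry for~\ref{it:cherry-bound}), and then convert the relevant edge and degree counts in $D$ back to $S$ via parts~\ref{it:trans1-1} and~\ref{it:trans1-1deg}, controlling cross-terms with $d_G(v)\leq 2pn$. The paper additionally disposes of the case $|U_i|<\eps n/2$ separately via part~\ref{it:trans1-3}, but your error bookkeeping absorbs this case directly into the $O(\eps')n^\l$ term, so the extra split is not needed.
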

	\begin{proof}
		We only prove part~\ref{it:cherry-bound}, since the proof of~\ref{it:matching-bound} is very similar. Let $G \in G(n,p)$.
		Given $\l$ and $\eps \in \left(0, 2^{-9} \right)$, let $C$ be sufficiently large, so that Theorem~\ref{t:transference1} applies for $\l$ and $\eps/16$.
		
		For the given $\l$-partite subgraph $S$ on partition classes $U_1,\dots,U_\l$ we may assume, without loss of generality, 
		that $|U_i|\geq \eps n/2$, since otherwise the bound easily follows from~$\kappa_\l(S)\leq\kappa_\l(G)$ and part~\ref{it:trans1-3} 
		of Theorem~\ref{t:transference1}. Moreover, a standard argument using Chernoff's inequality and the union bound implies that
		\begin{equation}\label{eq:edeg}
			d_S(v)\leq d_G(v)\leq 2pn
		\end{equation}
		for every vertex $v\in V(G)$.
		
		Let~$D$ be the dense model of $S$ provided by Theorem~\ref{t:transference1}. In view of part~\ref{it:trans1-2} of Theorem~\ref{t:transference1},
		it suffices to show that
		\begin{equation} \label{eq:D-bound}
			\kappa_\l(D) 
			\leq 
			c_{123}\cdot|U_1|\cdots|U_\l|+ \frac{\eps}{2}n^\l\,.
		\end{equation}
		
		Let $Y^+$ be the set of vertices $u \in U_1$ for which  $p\cdot d_D(u, U_2)>d_S(u, U_2)+\eps pn/16$ 
		or~$p\cdot d_D(u, U_3)>d_S(u, U_3)+\eps pn/16$. Part~\ref{it:trans1-1deg} tells us $|Y^+|\leq \eps n/8$ and combined with~\eqref{eq:edeg}
		it follows
		\begin{align*}
			p^2\cdot \sum_{u \in U_1} d_D(u, U_2)d_D(u, U_3) 
			&\leq
			\sum_{u \in U_1} d_S(u, U_2)d_S(u, U_3) + \Big(\frac{\eps}{4}+\frac{\eps^2}{16}\Big) p^2n^3 + |Y^+|p^2n^2\\
			&\leq 
			c_{123}\cdot p^2 |U_1| |U_2| |U_3| + \frac{\eps}{2} p^2n^3\,.
		\end{align*}
		Consequently, the number of $K_{1,2}$ in $D$ with center vertex in $U_1$ and leaves in $U_2$ and $U_3$ is bounded from above by 
		\[
		c_{123}\cdot |U_1| |U_2| |U_3| + \frac{\eps}{2}n^3
		\]
		and~\eqref{eq:D-bound}  is obtained by bounding the extension of each of these $K_{1,2}$ trivially by 
		$|U_4| \cdots|U_\l|$.
	\end{proof}
	
	We conclude this section with the proof of Lemma~\ref{l:rainbow}, which yields rainbow cliques in bounded colourings of the random graph.
	
	\begin{proof}[Proof of Lemma~\ref{l:rainbow}]
		Given $\l\geq 4$ and $\nu>0$ we define the auxiliary constant $\delta=\l^{-4}/2$ and 
		let $C$ be sufficiently large so that the Theorem~\ref{t:transference1} and Lemma~\ref{l:upper-rainbow} apply for 
		\begin{equation}\label{eq:eps-bdd}
			\eps
			=
			\Big(\frac{\nu}{2\l}\Big)^\l\,.
		\end{equation}
		Suppose $G\in \Gnp$ satisfies the conclusions of Theorem~\ref{t:transference1} and Lemma~\ref{l:upper-rainbow}.
		We may also assume that for every subset $X\subseteq V(G)$ of size at least $|X|\geq \frac{\nu n}{2\l}$
		we have $d_G(v,X)\leq 1.1p|X|$ for all but at most $n/\log(n)$ vertices $v$.
		
		Below we only prove the moreover-part of the lemma for the $\l$-clean subgraph $G_\l\subseteq G$, since the proof 
		for~$G$ without the upper bound on~$p$  is identical. Hence, we assume $p=p(n)\leq n^{-\frac{2\l-5}{\l^2-\l-4}}/\omega(n)$ for some function 
		$\omega$ tending to infinity. From this upper bound on~$p$ it follows by Markov's inequality that,  asymptotically almost 
		surely, the number of distinct pairs of $K_\l$ sharing more than two vertices is at most~$o(pn^2)$
		and, therefore, we may assume
		\begin{equation}\label{eq:edgeG_l}
			\big|E(G)\setminus E(G_\l)\big|
			\leq 
			\eps pn^2\,.
		\end{equation}
		Let $\phi\colon E(G)\to\NN$ 
		be an edge colouring, which is $(\l^{-5}/4,p)$-bounded on $U\subseteq V(G)$ of size at least $\nu n$.
		
		Let $U'_1\dcup\dots\dcup U'_\l=U$ be a balanced partition of $U$. After removing a few vertices of too high degree, i.e., vertices $u\in U'_i$ for which 
		$d_{G}(u,U'_j)>1.1\,p|U'_j|$ for some $j\neq i$, we arrive at a collection $\cU=(U_1,\dots,U_\l)$ of mutually disjoint sets of size $m$ such that 
		\[
		|U_1|=\dots=|U_\l|=m\geq \frac{|U|}{2\l}\geq\frac{\nu}{2\l}n
		\]
		and for every every vertex $u_i\in U_i$ and $j\in[\l]$ we have
		\begin{equation}\label{eq:degU}
			d_{G}(u_i,U_j)\leq 2 p|U_j|=2pm\,.
		\end{equation}
		In addition, the boundedness of $\phi$ and the choice of $\delta$ ensures for every colour $c\in \NN$
		\begin{equation}\label{eq:dbddU}
			d_c(u_i,U_j)
			\leq
			d_c(u_i,U)
			\leq
			\frac{1}{4\l^5} p |U|
			\leq 
			\frac{1}{2\l^4} p |U_j|
			=
			\delta pm\,.
		\end{equation}
		In view of~\eqref{eq:edgeG_l}, part~\ref{it:trans1-3} of Theorem~\ref{t:transference1} yields 
		\[
		\kappa_\l(G_{\l}[\cU])
		\geq
		(1-2\eps)p^{\binom{\l}{2}}m^\l
		\]
		and below we shall bound the number of non-rainbow copies  of $K_\l$ in~$G[\cU]\supseteq G_\l[\cU]$. 
		
		For that it will be useful to classify the non-rainbow copies according to where the repeated colour occurs. 
		We consider two cases depending on whether the two edges of the same colour share a vertex or form a matching.
		Hence, we consider the number $\kappa_\l^{\ee}(G[\cU],\phi)$ of copies of~$K_\l$ in $G[\cU]$ 
		containing edges $e\in E_G(U_1,U_2)$ and $e'\in E_G(U_1,U_3)$
		such that~$\phi(e)=\phi(e')$. Similarly, we define $\kappa_\l^{\ep}(G[\cU],\phi)$ to be those copies with the two 
		edges of the same colour being from $E_G(U_1,U_2)$ and $E_G(U_3,U_4)$. We will exploit the boundedness of $\phi$ to 
		deduce the following claim from Lemma~\ref{l:upper-rainbow}.
		\begin{claim}\label{c:bad-upper}
			We have $\kappa_\l^{\ep}(G[\cU],\phi)\leq 5\delta p^{\binom{\l}{2}}m^\l$ and $\kappa_\l^{\ee}(G[\cU],\phi)\leq 5\delta p^{\binom{\l}{2}}m^\l$.
		\end{claim}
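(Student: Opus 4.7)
My plan is to treat the two quantities $\kappa_\l^{\ep}$ and $\kappa_\l^{\ee}$ separately, decomposing each as a sum indexed by the repeated colour class and then applying parts~\ref{it:matching-bound} and~\ref{it:cherry-bound} of Lemma~\ref{l:upper-rainbow} respectively to each summand.

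For $\kappa_\l^{\ep}$, I would associate to each colour $c\in\NN$ the $\l$-partite subgraph $F_c\subseteq G[\cU]$ whose edges between $U_1,U_2$ and between $U_3,U_4$ are precisely the colour-$c$ edges of $G$, and whose remaining edges coincide with those of $G[\cU]$. Since every $K_\l$ counted by $\kappa_\l^{\ep}$ is $\l$-partite and contains exactly one edge in $E_G(U_1,U_2)$ and one in $E_G(U_3,U_4)$, both of the same colour $c$, it is counted by $F_c$ for a unique $c$, and therefore $\kappa_\l^{\ep}(G[\cU],\phi)=\sum_c\kappa_\l(F_c)$. Writing $d^{(c)}_{ij}=e_{F_c}(U_i,U_j)/(pm^2)$, part~\ref{it:matching-bound} of Lemma~\ref{l:upper-rainbow} gives
\[
\kappa_\l(F_c)\leq d^{(c)}_{12}d^{(c)}_{34}\,p^{\binom{\l}{2}}m^\l+\eps\,p^{\binom{\l}{2}}n^\l\,.
\]
The boundedness assumption~\eqref{eq:dbddU} forces $d^{(c)}_{12},d^{(c)}_{34}\leq\delta$, while the degree bound~\eqref{eq:degU} yields $\sum_c d^{(c)}_{12}=e_G(U_1,U_2)/(pm^2)\leq 2$. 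Combining these gives $\sum_c d^{(c)}_{12}d^{(c)}_{34}\leq 2\delta$, so the aggregate main term is bounded by $2\delta\,p^{\binom{\l}{2}}m^\l$.

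The case of $\kappa_\l^{\ee}$ is analogous. For each colour $c$, I would let $F^{\ee}_c$ be the $\l$-partite subgraph obtained from $G[\cU]$ by restricting the edges between $U_1,U_2$ and between $U_1,U_3$ to those of colour~$c$. Then $\kappa_\l^{\ee}(G[\cU],\phi)=\sum_c\kappa_\l(F^{\ee}_c)$, and part~\ref{it:cherry-bound} of Lemma~\ref{l:upper-rainbow} bounds each $\kappa_\l(F^{\ee}_c)$ in terms of the same-colour cherry count $c^{(c)}_{123}=\sum_{u\in U_1}d_c(u,U_2)d_c(u,U_3)/(p^2m^3)$. For each vertex $u\in U_1$, the bound $d_c(u,U_2)\leq\delta pm$ from~\eqref{eq:dbddU}, combined with $\sum_c d_c(u,U_3)=d_G(u,U_3)\leq 2pm$ from~\eqref{eq:degU}, gives $\sum_c d_c(u,U_2)d_c(u,U_3)\leq 2\delta p^2m^2$, and after dividing by $p^2m^3$ and summing over $u\in U_1$ we obtain $\sum_c c^{(c)}_{123}\leq 2\delta$, again leading to the main-term bound $2\delta\,p^{\binom{\l}{2}}m^\l$.

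The main obstacle I foresee is controlling the accumulated additive error $\sum_c\eps\,p^{\binom{\l}{2}}n^\l$, because the number of colour classes $c$ with $\kappa_\l(F_c)>0$ could in principle be as large as $\Theta(pn^2)$. My plan to handle this is to bucket colours by the size of the product $d^{(c)}_{12}d^{(c)}_{34}$ (respectively $c^{(c)}_{123}$): only $O(\delta/\eta)$ colours can have product at least a threshold $\eta$, so these \emph{heavy} colours account for a bounded number of applications of Lemma~\ref{l:upper-rainbow}, while the contribution of the \emph{light} colours is estimated directly via~\eqref{eq:dbddU}, the total edge count~\eqref{eq:degU}, and the averaged extension bound implied by part~\ref{it:trans1-3} of Theorem~\ref{t:transference1}. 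Calibrating $\eta$ against $\delta$ and the smallness of $\eps=(\nu/(2\l))^\l$ from~\eqref{eq:eps-bdd} should absorb the remaining error and produce the desired bound $\kappa_\l^{\ep},\kappa_\l^{\ee}\leq 5\delta\,p^{\binom{\l}{2}}m^\l$.
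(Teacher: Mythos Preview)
Your main-term computations are correct: indeed $\sum_c d_{12}^{(c)}d_{34}^{(c)}\le 2\delta$ and $\sum_c c_{123}^{(c)}\le 2\delta$, so the accumulated additive error you flag is the only remaining obstacle. However, your proposed fix does not close it. After isolating the $O(\delta/\eta)$ heavy colours, you still need to bound $\sum_{c\text{ light}}\kappa_\l(F_c)$, and for this there is no route that bypasses Lemma~\ref{l:upper-rainbow}: neither~\eqref{eq:dbddU}, nor~\eqref{eq:degU}, nor part~\ref{it:trans1-3} of Theorem~\ref{t:transference1} gives any usable upper bound on the number of $K_\l$ in $G[\cU]$ extending a fixed edge or a fixed matching. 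Part~\ref{it:trans1-3} only controls $\kappa_\l(G'[\cU])$ globally, which in this situation is just the trivial estimate $\kappa_\l(G[\cU])\approx p^{\binom{\l}{2}}m^\l$ and says nothing about how those copies distribute over the light colour classes. So you are forced to invoke Lemma~\ref{l:upper-rainbow} once per light colour as well, and the additive errors (up to $\Theta(pn^2)$ of them) swamp the main term regardless of how $\eta$ is calibrated.

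The paper's proof avoids per-colour applications altogether. It greedily partitions $\NN$ into at most $r\le 2/\delta$ clusters $C_1,\dots,C_r$ so that each cluster carries at most $2\delta pm^2$ edges within $E_G(U_1,U_2)$; this is possible precisely because, by~\eqref{eq:dbddU}, every individual colour contributes at most $\delta pm^2$ such edges, while the total is at most $2pm^2$ by~\eqref{eq:degU}. Since the repeated colour of any $K_\l$ counted by $\kappa_\l^{\ep}$ lies in a single cluster $C_\rho$, one needs only $r\le 2/\delta$ applications of Lemma~\ref{l:upper-rainbow}\,\ref{it:matching-bound}, and the cumulative error $(2/\delta)\cdot\eps p^{\binom{\l}{2}}n^\l$ is absorbed via~\eqref{eq:eps-bdd}. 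For $\kappa_\l^{\ee}$ the clustering is done separately for each centre $u\in U_1$ (bounding $\sum_{c\in C^u_\rho}d_c(u,U_2)\le 2\delta pm$), but the number of applications of Lemma~\ref{l:upper-rainbow}\,\ref{it:cherry-bound} is again at most $2/\delta$. The missing idea, then, is to agglomerate the colours into a \emph{bounded} number of groups \emph{before} invoking the counting lemma, rather than trying to rescue a per-colour decomposition after the fact.
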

		Applying the claim to cover all $\binom{\binom{\l}{2}}{2}$ possibilities where the two identically coloured edges may appear
		within the pairs of classes of $\cU$ yields at least 
		\[
		(1-2\eps)p^{\binom{\l}{2}}m^\l - \frac{\l^4}{8}\cdot 5\delta p^{\binom{\l}{2}}m^\l
		>
		\frac{1}{2}p^{\binom{\l}{2}}m^\l
		\]
		rainbow copies of $K_\l$ in $G_\l[\cU]$. Hence, for the proof of Lemma~\ref{l:rainbow} it only remains to verify Claim~\ref{c:bad-upper}.
	\end{proof}

	\begin{proof}[Proof of Claim~\ref{c:bad-upper}]
		We first bound $\kappa_\l^{\ep}(G[\cU],\phi)$. Note that if each colour occupies $\delta p m^2$ 
		edges of~$G[U_1,U_2]$, then the claimed upper bound follows easily from Lemma~\ref{l:upper-rainbow}\,\ref{it:matching-bound}. 
		We shall reduce the problem to this case.
		
		Fix a partition $C_1\dcup\dots\dcup C_r=\NN$ of the set of colours  such that 
		\[
		r\leq \frac{2}{\delta}
		\qqand
		\big|\phi^{-1}(C_\rho)\cap E_G(U_1,U_2)\big|
		\leq
		2\delta pm^2\ \text{for every $\rho\in[r]$.}
		\]
		Note that due to~\eqref{eq:degU} and~\eqref{eq:dbddU} such a partition of the colours can be found greedily, by
		adding colours to a class $C_\rho$ as long as the bound on the number of edges in the preimage holds. 
		
		For $\rho\in [r]$ let $S_\rho$ be the subgraph obtained from $G[\cU]$ 
		by restricting the edges from~$G[U_1,U_2]$ and $G[U_3,U_4]$ to
		$\phi^{-1}(C_\rho) \cap E_G(U_1,U_2)$ and $\phi^{-1}(C_\rho) \cap E_G(U_3,U_4)$, i.e., to those edges having a colour from $C_\rho$.  
		Note that every copy counted by $\kappa_\l^{\ep}(G[\cU],\phi)$ is contained in some $S_\rho$.
		
		Applying Lemma~\ref{l:upper-rainbow}\,\ref{it:matching-bound}, we obtain
		\[
		\kappa_\l(S_\rho)
		\leq  
		2 \delta pm^2 \cdot \big|E_{S_\rho}(U_3, U_4)\big| \cdot p^{\binom{\l}{2}-2}m^{\ell-4} + \eps p^{\binom{\l}{2}}n^\ell,
		\]
		and summing over all $\rho\in[r]$ and recalling~\eqref{eq:degU} yields the desired bound
		\[
		\kappa_\l^{\ep}(G[\cU],\phi)
		\leq 
		4 \delta p^{\binom{\l}{2}}m^\l + \eps p^{\binom{\l}{2}}n^\ell
		\overset{\eqref{eq:eps-bdd}}{\leq} 
		5 \delta p^{\binom{\l}{2}}m^\l\,.
		\]

		For the bound on $\kappa_\l^{\ee}(G[\cU],\phi)$, we will again partition the colours, to reduce it to $2/\delta$ applications 
		of Lemma~\ref{l:upper-rainbow}\,\ref{it:cherry-bound}. However, every vertex in $U_1$ will define its own partition. 
		For every vertex $u\in U_1$ we fix a partition $C^u_1\dcup\dots\dcup C^u_{r_u}=\NN$ such that for every $\rho\in[r_u]$ we have
		\[
		\sum_{c\in C^u_\rho} d_c(u,U_2) 
		\leq
		2\delta p m^2\,. 
		\]
		Again it follows from~\eqref{eq:degU} and~\eqref{eq:dbddU} that such a partition exists for some $r_u\leq 2/\delta$. For simplicity we allow empty 
		partition classes and, hence,  we  may assume that $r_u=r=\lfloor 2/\delta\rfloor$ for every vertex $u\in U_1$.
		
		For $\rho\in [r]$ this time we let $S_\rho$ be the subgraph obtained from $G[\cU]$ 
		by restricting the edges in $G[U_1,U_2]$ and $G[U_1,U_3]$ incident to a vertex $u\in U_1$ to those, which received a colour from~$C^u_\rho$, i.e., 
		\[
		E_{S_\rho}(U_1,U_2)=\bigcup_{u\in U_1}\big\{uv\in E(G)\colon v\in U_2\tand \phi(uv)\in C^u_\rho\big\}
		\]		
		and $E_{S_\rho}(U_1,U_3)$ is defined in an analogous way.
		This definition guarantees that every $K_\l$ in $G[\cU]$  
		containing a monochromatic $K_{1,2}$ with center in $U_1$ and leaves in $U_2$ and $U_3$ is contained in $S_\rho$ for some $\rho\in[r]$.
		
		In view of Lemma~\ref{l:upper-rainbow}\,\ref{it:cherry-bound}, we have
		\begin{align*}
			\kappa_\l(S_\rho)
			&\leq 
			\sum_{u\in U_1} 2\delta pm\cdot \sum_{c\in C^u_\rho} d_c(u,U_3) \cdot p^{\binom{\l}{2}-2} m^{\ell-3} + \eps p^{\binom{\l}{2}}n^\ell\\
			&=
			\big|E_{S_\rho}(U_1,U_3)\big|\cdot 2\delta p^{\binom{\l}{2}-1} m^{\ell-2} + \eps p^{\binom{\l}{2}}n^\ell\,.
		\end{align*}
		In view of~\eqref{eq:degU}, summing over all $\rho\in[r]$ yields the desired bound
		\[
		\kappa_\l^{\ee}(G[\cU],\phi)
		\leq 
		4 \delta p^{\binom{\l}{2}}m^\l + \eps p^{\binom{\l}{2}}n^\ell
		\overset{\eqref{eq:eps-bdd}}{\leq}
		5 \delta p^{\binom{\l}{2}}m^\l\,,
		\]
		which concludes the proof of Claim~\ref{c:bad-upper} and, hence, Lemma~\ref{l:rainbow} is established.
	\end{proof}
	
	\begin{rem}
		Theorem~\ref{t:transference1} from~\cite{cgss14} is more general and applies not only to 
		cliques $K_\l$, but to all \emph{strictly balanced} graphs~$H$ for 
		$p\geq Cn^{-\frac{|V(H)|-2}{|E(H)|-1}}$, i.e., for graphs $H$ satisfying 
		\[
		\frac{|E(H')|-1}{|V(H')|-2}
		<
		\frac{|E(H)|-1}{|V(H)|-2}
		\] 
		for all proper subgraphs $H'\subsetneq H$ on at least three vertices. 
		Starting with this general version of Theorem~\ref{t:transference1}, the arguments 
		from this section can be carried out 
		verbatim for such graphs~$H$. This yields a version of Lemma~\ref{l:rainbow} guaranteeing 
		rainbow copies of strictly balanced graphs~$H$ for $(\delta,p)$-bounded edge colourings 
		of~$G(n,p)$ for $p\geq Cn^{-\frac{|V(H)|-2}{|E(H)|-1}}$ as long as $\delta$ is sufficiently 
		small and $C$ is sufficiently large depending on $H$.
	\end{rem}
	
	\section{Canonical cliques in unbounded colourings of random graphs}	
	\label{sec:strictly-min}
	This section contains the proof of Lemma~\ref{l:strictly-min} along with the required prerequisites.
	We begin with an overview of the proof.
	\subsection{Outline of the proof}
	\label{sec:outline-unbounded}
	Again, the main tool in the proof of Lemma~\ref{l:strictly-min} is the transference principle developed by Conlon and Gowers. 
	This result asserts that asymptotically almost surely every subgraph $F$ 
	of $\Gnp$ has a dense model (see, e.g., Theorem~\ref{t:transference1}\,\ref{it:trans1-1}). 
	Moreover, if $p \gg n^{-\frac {2}{\l+1}}$, then the dense model $D$ and the subgraph~$F$, 
	have closely related distributions {of} the copies of $K_\l$, which is made precise 
	in part~\ref{it:trans1-2} of Theorem~\ref{t:transference1}. Roughly speaking, 
	we may think of $F$ being \emph{close to} a random subgraph of $D$ whose edges are sampled independently with probability $p$.
	
	For mimicking the argument from Section~\ref{sec:outline} for colourings of $K_n$, the main obstacle is that the neighbourhoods 
	in $G(n,p)$ are of order $pn$, not allowing the iteration rendered in~\eqref{eq:iteration-dense}. 
	We circumvent this by considering \textit{suitable} subgraphs $F_1, F_2, \ldots, F_{L}$ of $G(n,p)$ for~$L=2(\l-1)(\l-2)+2$ 
	and obtain their dense models $D_1,\ldots, D_{L}$, which yield linear-sized sets as neighbourhoods in the dense models. 
	Another challenge is that, for transference to be useful, the dense model has to contain~$\Omega(n^{\ell})$ copies of $K_\ell$, 
	so we also need a robust, counting version of the argument for the case of unbounded colourings of~$K_n$.
	
	In the proof of Lemma~\ref{l:strictly-min} the ordering of the underlying vertex set will be important. For that, we refine
	the definition of $d_c(u,U)$ for a given edge colouring $\phi$ and for $\diamond\in\{<\,,>\}$ we set
	\[
	d^\diamond_c(u,U)=\big|N^\diamond_c(u,U)\big|=\big|\{w\in U\colon \phi(uw)=c\tand u\diamond w\}\big|\,.
	\]
	In Lemma~\ref{l:strictly-min} we assume that every sufficiently large set~$U$ of vertices of the random graph has the property that 
	half of its vertices $u$ have a large monochromatic neighbourhood. For simplicity, in the outline below we shall always 
	assume that, in fact, these vertices~$u$ always lie before their monochromatic neighbourhood, 
	i.e., $d^<_c(u,U)\geq4\delta p |U|$ for at least~$|U|/4$ vertices.

	The subgraphs $F_i\subseteq G(n,p)$ for $i\in[L]$ will be selected in an iterative manner. For the definition of $F_1$, 
	let $V_1$ be those $n/4$ vertices $v$ such that $d^<_{c(v)}(v)\geq 4\delta pn$ for some associated colour $c(v)$.
	The graph $F_1$ is then defined as the union of those edges, i.e.,
	\[
	E(F_1)=\{vw\in E(G(n,p))\colon v\in V_1, v<w,\tand\phi(vw)=c(v)\}\,.
	\]
	In other words, the colour of each edge $e\in E(F_1)$ is defined by its \textit{starting point} $\min(e)$. 
	The transference principle yields a dense model $D_1$ of $F_1$, and by part~\ref{it:trans1-1deg} of Theorem~\ref{t:transference1}, for
	most $v \in V_1$, $p\cdot d^<_{D_1}(v)$ is well approximated by $d^<_{F_1}(v)$. Consequently, typically
	the neighbourhood of $v$ in $D_1$ defines a subset $S_1(v)$ of size at least $3\delta n$.
	
	We continue and define $F_2$ based on the sets $S_1(v_1)$ for $v_1\in V_1$. Again, appealing to the 
	assumption~\eqref{eq:delta-unbounded-undir} of Lemma~\ref{l:strictly-min} as described above for $U=S_1(v_1)$, 
	we obtain $|S_1(v_1)|/4$ pairs $(v_2, c)\in S_1(v_1)\times \NN$ with
	\begin{equation}\label{eq:outline-deg2}
		d_{c}^<(v_2, S_1(v_1)) \geq 4 \delta p |S_1(v_1)|=\Omega(\delta^2 p n)\,.
	\end{equation}
	However, the colour $c$ depends on $v_1$ and, similarly, as in the definition of~$F_1$ we shall find a ``large'' 
	monochromatic neighbourhood of~$v_2$ for the definition of~$F_2$. On the other hand, since the degree of 
	$v_2$ is close to $pn$ in $G(n,p)$, at most $1/\delta^{2}$ different colours might occur for different 
	choices of~$v_1$. We let $c(v_2)$ be the \textit{majority} colour and restrict to it for the definition of $F_2$.
	Moreover, for an appropriately chosen subset $V_2\subseteq  \bigcup\{S_1(v_1)\colon v_1\in V_1\}$, 
	we shall define $F_2$ in such a way that
	\[
	E(F_2)\subseteq \{v_2w \colon v_2 \in V_2, v_2<w,\tand\varphi(v_2w) = c(v_2) \}\,.
	\]
	In particular, the colours of the edges in $F_2$ are determined again by their starting point. For some technical reasons the definition of $F_2$ is a bit more involved and, for example, we will impose $F_2$ to be bipartite (see Claim~\ref{c:hypergraphs-stronger} below for the formal statement).
	Similar as before, we obtain a dense model~$D_2$ for $F_2$. Owing to~\eqref{eq:outline-deg2},
	the vertices in $V_2$ have $\Omega(\delta^2pn)$ neighbours in $F_2$. Consequently, most vertices in $V_2$ have $\Omega(\delta^2n)$ neighbours 
	in $D_2$. Those neighbourhoods yield the linear sized sets $S_2(v_2)$, which allows us to iterate the argument to obtain a subgraph $F_3 \subseteq G(n,p)$ with all edges colours determined by its starting vertex and a dense model $D_3$ of $F_3$ with linear sized neighbourhoods.
	
	This way we obtain graphs $F=F_1\cup\dots\cup F_{L}$, and $D=D_1\cup\dots\cup D_{L}$. We shall show that the dense graph 
	$D$ contains $\Omega(n^{\l})$ copies of $K_\l$ by its construction through ``nested neighbourhoods.'' Consequently, 
	the transference principle in the form of part~\ref{it:trans1-2} of Theorem~\ref{t:transference1}
	tells us that the sparse subgraph $F\subseteq G(n,p)$ 
	contains $\Omega(p^{\binom{\l}{2}}n^\l)$ copies of~$K_\l$ and by the choice of the $F_i$ these copies will be 
	non-strictly min-coloured.

	It remains to analyse the induced vertex colourings of these non-strictly min-coloured copies of $K_\ell$ in $G(n,p)$.
	However, those induced vertex colours are not synchronised within each of the sets $V_1,\dots, V_L$, i.e., the edges of $F_i$
	are not monochromatic and may induce different colours for the vertices in $V_i$. 
	We address this issue by a more careful selection of the graphs $F_1,\dots,F_L$
	(see~\ref{it:psi} of Claim~\ref{c:hypergraphs-stronger} below). More specifically, we distinguish between two cases. Firstly, if most of the sets $V_i$ have a `dominant vertex colour' $c_i$, then one can find a monochromatic or a rainbow subset of the vertex colours $c_i$ of size $\ell-1$. Any clique in the corresponding $\ell$-partite subgraph of $F$ will be monochromatic or min-coloured. Secondly, assume that there are vertex sets $V_1, \ldots, V_{\ell-1}$ (after re-indexing) have no `dominant' vertex colour. Then a counting argument will be applied to show that a small proportion of $K_\l$-copies in $F_1, \ldots, F_{\ell-1}$ contain distinct vertices $u, v$ with $c(u)=c(v)$, and therefore most of the $K_\l$-copies found above will be strictly min-coloured.

	The proof of Lemma~\ref{l:strictly-min} is based on a more involved application of the transference principle compared to the proof of Lemma~\ref{l:rainbow} in Section~\ref{sec:rainbow}. In Section~\ref{sec:prelim} we review the required results from~\cite{cg16}
	and some helpful consequences for the proof of Lemma~\ref{l:strictly-min} , which is presented in Section~\ref{sec:pf-unbounded}.

	\subsection{The transference principle}
	\label{sec:prelim}
	In this section, we present the prerequisites for the proof of Lemma~\ref{l:strictly-min}. The statements are written for arbitrary strictly balanced graphs $H$, although we shall only employ them for $H=K_\ell$. For a strictly balanced graph $H$ on at least three vertices, 
	we can define its \emph{$2$-density} by
	\[
	m_2(H) = \frac{|E(H)|-1}{|V(H)|-2}\,,
	\]
	and we note that $m_2(K_\ell) = (\ell+1)/2$ appears in the exponents of~$p=p(n)$ in the assumptions of the earlier 
	statements in Sections~\ref{sec:introduction}--\ref{sec:rainbow}.
	Similarly, below we shall impose that, for a given
	strictly balanced graph $H$ on the vertex set $[\ell]$ (with $\ell \geq 3$) and a sufficiently large constant $C$, $p=p(n)\geq Cn^{-1/m_2(H)}$.	
	
	For some (large) integer $n$, we shall work within the set of functions from $[n]^{(2)}$ 
	to $\RR$, which naturally corresponds to the set of \emph{weighted graphs} on the vertex set $[n]$.
	Therefore, we often identify a graph $F$ on $[n]$ with the indicator function $\ind_F$ of its edge set and a
	dense model $\df$ for $F$ is a function $\df\colon [n]^{(2)} \to [0, 1]$, 
	which is ``close'' to $p^{-1}\ind_F$ 
	in terms of its distribution {of} weighted edges and copies of $H$.
	The definitions of edge counts and vertex degrees extend straightforwardly to functions 
	$\ff\colon [n]^{(2)} \to \R$ and for that we set
	\[
	e(\ff) = \sum_{u < w} \ff(uw)\,,\quad
	e_\ff(U, W) = \sum_{u \in U, w \in W} \ff(uw)\,,\qand
	d_\ff(v, U) = \sum_{u \in U \setminus \{ v\}}\ff(uv).
	\]
	With this notation at hand we can define the \emph{cut-norm} by
	\[
	\cutnorm{\ff} 
	=
	\frac{1}{n^2} \max_{U, W \subseteq [n]} |e_{\ff}(U, W)\,
	|.\]
	This norm allows us to compare the edge distributions of two weighted graphs on $[n]$. In fact, we will consider
	weighted graphs $\ff$ and $\df$ to be ``close,'' if $\cutnorm{\ff-\df}$ is ``small''  (see, e.g., 
	Theorem~\ref{t:transference1}\,\ref{it:trans1-1} and Theorem~\ref{t:transference}\,\ref{it:transf-edgedist} below).
	Similarly, for a graph $H$ with vertex set $[\l]$ we define its homomorphism density in $\ff$ by 
	\[
	\Lambda_H(\ff) = \frac{1}{n^\ell}\sum_{v_1, \dots, v_\ell\in[n]}\prod_{ij \in e(H)}\ff(v_iv_j)\,,
	\]
	where we use the convention $\ff(v_iv_j)=0$ in case $v_i=v_j$. Consequently, for cliques the quantity 
	$\Lambda_{K_\l}(\ff)$ corresponds to the weighted $K_\l$-density in $\ff$ and for $\ff$ being an unweighted  
	graph we can recover the notation $\kappa_\ell (\ff)=n^\l\cdot\Lambda_{K_\l}(\ff)$ from Section~\ref{sec:rainbow}.

	As discussed in the outline for the proof of  Lemma~\ref{l:strictly-min},  
	Theorem~\ref{t:transference1} will be applied in stages to the 
	subgraphs $F_1,\dots,F_L \subseteq \Gnp$ to obtain dense models $D_1, \dots, D_L$. 
	In order to ensure that $D=D_1 \cup \dots \cup D_L$ is still a useful approximation 
	of $F=F_1 \cup \dots \cup F_L$, we need a bit more insight into how these dense models are obtained.  
	Informally, Conlon and Gowers~\cite{cg16} construct a norm $\| \cdot \|$ on  the set of weighted graphs 
	$\R^{[n]^{(2)}}$ so that the following holds:
	If $\ind_F$ is the characteristic function of the edges of some 
	$F\subseteq \Gnp$ and $\|  p^{-1} \ind_F - \df \|$ is sufficiently small for some dense model~$\df$ with 
	$\| \df \|_{\infty} \leq 1$, then $p^{-1}\ind_F$ and~$\df$ have a ``similar'' 
	distribution of edges and copies of~$H$.  
	A major contribution of~\cite{cg16} is precisely finding a norm which is sufficiently weak to allow a dense model which is arbitrarily close to~$p^{-1}\ind_F$, and sufficiently strong to preserve the relevant properties of~$F$. However, the norm $\|\cdot \|$ actually depends on the random graph $\Gnp$
	in the sense that asymptotically almost surely $G\in G(n,p)$ has the property that there is a norm $\|\cdot \|$ with the aforementioned 
	properties for every subgraph $F\subseteq G$. Theorem~\ref{t:transference} below is a version of the transference principle of Conlon and Gowers, which is tailored for our proof of Lemma~\ref{l:strictly-min}. It is implicit
	in the work in~\cite{cg16} and in the Appendix we discuss in more detail how it can be extracted.

	\begin{thm} \label{t:transference} 
		For every strictly balanced graph $H$ with $V(H)=[\l]$ and every $\eps>0$ there is some constant $C>0$ such that for $p=p(n)$ with 
		$Cn^{-1/m_2(H)}\leq p\leq 1/C$ asymptotically almost surely the following holds for $G\in G(n,p)$.
		
		There exists a norm $\|\cdot \|$ on the set of weighted graphs $\R^{[n]^{(2)}}$ such that
		for every~$F \subseteq G$, there is a dense model $\df_F\colon [n]^{(2)}\to[0,1]$ with $\|p^{-1}\ind_F  - (1+ \eps)\df_F  \| < \eps$
		and 
		\begin{enumerate}[label=\rmlabel]
			\item 	\label{it:transf-edgedist}
			for all functions $\ff$, $\df \colon [n]^{(2)}\to\R$ with $\| \ff   - (1+\eps)\df \| < \eps$ we have
			\[
			\cutnorm{\ff - \df } \leq 2\eps\,.
			\]
			\item \label{it:transf-Hcount}
			for every function $\df\colon [n]^{(2)}\to[0,2\l]$ with $\| p^{-1}\ind_F  - (1+\eps) \df \| \leq \eps$ we have 
			\[
			\Lambda_H(\df ) \leq  p^{-|E(H)|}\Lambda_H(F) + (4\ell)^{\ell^2}\cdot\eps \,.
			\]
		\end{enumerate}
	\end{thm}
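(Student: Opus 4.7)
The plan is to extract the statement from the Conlon--Gowers dense-model framework~\cite{cg16} applied with $H$ as the target graph. For each edge $\{i,j\}\in E(H)$ and each injection $\sigma\colon[\l]\setminus\{i,j\}\to V(G)$, let $\psi_{ij,\sigma}\colon[n]^{(2)}\to\R$ be the basic test function that, on input $uw$, evaluates the weighted count of homomorphisms of $H-\{i,j\}$ into $G$ which extend $\sigma$ by sending $\{i,j\}$ to $\{u,w\}$, each non-$\{i,j\}$ edge contributing a weight of $p^{-1}\ind_G$. I would then define $\|\cdot\|$ as the gauge of the symmetric convex hull of these $\psi_{ij,\sigma}$ together with the normalised indicators $n^{-2}\ind_{U\times W}$ for $U,W\subseteq[n]$; by construction $\cutnorm{\cdot}\leq\|\cdot\|$.

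First I would verify, by a Janson/union-bound argument relying on $p\geq Cn^{-1/m_2(H)}$ together with strict balancedness of $H$, that asymptotically almost surely every basic test function $\psi_{ij,\sigma}$ has its first and higher moments comparable to those in $K_n$. This is precisely the \emph{linear forms} hypothesis of~\cite{cg16}, and under it the Conlon--Gowers Hahn--Banach argument produces, for every $F\subseteq G$ simultaneously, a function $\df_F\colon[n]^{(2)}\to[0,1]$ satisfying $\|p^{-1}\ind_F-(1+\eps)\df_F\|<\eps$; crucially the norm depends only on $G$. Part~\ref{it:transf-edgedist} is then immediate from $\cutnorm{\cdot}\leq\|\cdot\|$: assuming $\df$ takes values in $[0,1]$, the hypothesis $\|\ff-(1+\eps)\df\|<\eps$ yields
\[
\cutnorm{\ff-\df}\leq\cutnorm{\ff-(1+\eps)\df}+\eps\,\cutnorm{\df}\leq 2\eps.
\]

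For part~\ref{it:transf-Hcount} I would run the standard edge-by-edge telescope. Fixing an ordering $e_1,\dots,e_m$ of $E(H)$ and writing $h=p^{-1}\ind_F$, $g=(1+\eps)\df$, the quantity $\Lambda_H(g)-\Lambda_H(h)$ decomposes as a sum of $m\leq\binom{\l}{2}$ terms, each an average over injections $\sigma$ of the remaining $\l-2$ vertices of a scaled inner product $\langle g-h,\Phi\rangle/n^\l$, where $\Phi$ is a product-type function with earlier edges weighted by $g$ and later edges weighted by $h$. Since $\|\df\|_\infty\leq 2\l$, each such $\Phi$ is, on the pseudorandom event for $G$, a non-negative combination of the $\psi_{ij,\sigma}$ with coefficients bounded by a power of $2\l$, placing it in a controlled multiple of the unit ball of the norm dual to $\|\cdot\|$. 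Combined with $\|g-h\|\leq\eps$, each telescope summand is then $O(\eps)$, and crude combinatorial bookkeeping absorbs the constants into the factor $(4\l)^{\l^2}\eps$; the extra factor $(1+\eps)^m$ between $\Lambda_H(g)$ and $\Lambda_H((1+\eps)\df)$ is negligible for the same reason.

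The main obstacle will be making the telescope rigorous when $h=p^{-1}\ind_F$ is unbounded in $\ell^\infty$, so that the naive dense counting lemma is unavailable. This is exactly why the family $\Psi$ was engineered so: any mixed $H$-count in which some edges carry the $h$ weight factors through the $\psi_{ij,\sigma}$, and the linear forms moment bounds on $G$ absorb the accumulating $p^{-1}$ factors. Verifying this absorption systematically through all $m$ telescope steps, while confirming that each intermediate product function lies inside a quantitatively controlled multiple of the dual unit ball, is the technical heart of the extraction and is precisely the content deferred to the appendix.
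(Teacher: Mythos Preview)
Your approach is sound in outline---it essentially reconstructs the Conlon--Gowers dense-model machinery from the ground up---but the paper takes a considerably shorter route. Rather than defining the norm explicitly, verifying the linear-forms condition, and rerunning the telescope, the paper isolates two black-box statements from~\cite{cg16}: first, that the associated measures $\mu_i=p_\star^{-1}\ind_{U_i}$ of $m$ independent copies $U_i\sim G(n,p_\star)$ a.a.s.\ satisfy the pseudorandomness conditions (P0)--(P3$'$); and second, that under those conditions any $f\leq m^{-1}\sum_i\mu_i$ admits a dense model $\df'\in[0,1]$ with $\|f-(1+\eps/4)\df'\|\leq\eps/2$, and moreover $\Lambda_H(f)\geq\Lambda_H(\df)-4|E(H)|\eps$ whenever $\df\in[0,1]$ and $\|f-(1+\eps)\df\|\leq\eps$. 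Sampling $G$ as the union $\bigcup_iU_i$ and rescaling $p^{-1}\ind_F$ slightly to land below $\mu$ then gives the existence of $\df_F$ and part~\ref{it:transf-edgedist} in a few lines. The one additional idea---which you absorb into ``crude bookkeeping'' but the paper makes explicit---is the treatment of $\df\in[0,2\l]$ in part~\ref{it:transf-Hcount}: instead of threading the factor $2\l$ through your telescope (which is delicate, since the basic anti-uniform functions in~\cite{cg16} are built from weights in $\{\mu_i,1\}$, not arbitrary bounded functions), the paper simply rescales, applying the black-box inequality to $f=\ind_F/(2\l p)\leq\mu$ and $\df/(2\l)\in[0,1]$, and then uses homogeneity $\Lambda_H(\alpha g)=\alpha^{|E(H)|}\Lambda_H(g)$ to recover the constant $(4\l)^{\l^2}$. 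Two small technical slips in your sketch: the norm in~\cite{cg16} is the \emph{maximum} of inner products against the anti-uniform family $\Phi_{\mu,1}$ (together with $\|\cdot\|_\square$), not the gauge of its convex hull---these are dual to one another; and the basic anti-uniform functions average over all placements of the remaining $\l-2$ vertices rather than fixing a single injection~$\sigma$, which is what makes the required moment bounds hold.
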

	We emphasise that, while~\ref{it:transf-edgedist} applies to any weighted graph $\ff$, 
	part~\ref{it:transf-Hcount} only applies to the subgraphs~$F$ of the random graph.
	Anyhow, in our intended application we have~$\ff=p^{-1}\ind_F$. Theorem~\ref{t:transference} is closely related 
	to Theorem~\ref{t:transference1} where $S$ and $D$ in Theorem~\ref{t:transference1} take the r\^ole of $F$ and $\df_F$ 
	in Theorem~\ref{t:transference}. In fact, applying~\ref{it:transf-edgedist} and~\ref{it:transf-Hcount}
	with~$\df=\df_F$ and $\ff=p^{-1}\ind_F$ in Theorem~\ref{t:transference} implies statements~\ref{it:trans1-1}
	and~\ref{it:trans1-2} of Theorem~\ref{t:transference1}.
	
	It will be convenient to move from the dense weighted graphs $\df$ back to unweighted graphs $D$ sampled by the edge weights of~$\df$. 
	The following lemma follows directly from  Chernoff's inequality and a union bound.
	\begin{lemma} \label{l:edge-sampling}
		For every $\eps>0$ and any sequence of functions $\df\colon [n]^{(2)} \to [0,1]$ asymptotically almost surely we have 
		$\cutnorm{\ind_D - \df} \leq \eps$ for the random graph $D$ on $[n]$ with every edge $e$ appearing independently 
		with probability $\df(e)$. \qed
	\end{lemma}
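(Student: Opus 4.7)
The plan is a standard Chernoff-plus-union-bound argument exploiting that the cut-norm is a maximum over only exponentially many subset pairs. Fix $U,W\subseteq[n]$ and consider
\[
e_{\ind_D-\df}(U,W)=\sum_{\substack{u\in U,\,w\in W\\ u\neq w}}\bl\ind_D(uw)-\df(uw)\br.
\]
Since the edges of $D$ are sampled independently with $\PP(uw\in E(D))=\df(uw)$, the summands (after grouping the two occurrences of each pair in $U\cap W$, which contributes a harmless factor of at most $2$) are independent, mean-zero, and bounded in absolute value by $2$. Thus by Hoeffding's inequality there is an absolute constant $c>0$ such that
\[
\PP\bl|e_{\ind_D-\df}(U,W)|>\eps n^2\br\leq 2\exp(-c\eps^2 n^2).
\]

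Next I would take a union bound over all pairs $(U,W)\in 2^{[n]}\times 2^{[n]}$. There are at most $4^n$ such pairs, so the probability that \emph{some} pair witnesses $|e_{\ind_D-\df}(U,W)|>\eps n^2$ is at most
\[
2\cdot 4^n\cdot\exp(-c\eps^2 n^2)=o(1)
\]
as $n\to\infty$, because the Gaussian-type tail beats the exponential count of subset pairs. On the complementary event, taking the maximum over $U,W$ and dividing by $n^2$ yields $\cutnorm{\ind_D-\df}\leq\eps$, which is exactly the asserted conclusion.

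There is no real obstacle here: the only point requiring any care is the bookkeeping for edges inside $U\cap W$, which the definition of $e_\ff(U,W)$ in Section~\ref{sec:prelim} counts twice, but this only changes the Hoeffding constant and is absorbed into $c$. The argument is uniform in the function $\df$, so it applies verbatim to any (possibly $n$-dependent) sequence $\df\colon[n]^{(2)}\to[0,1]$.
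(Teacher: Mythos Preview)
Your argument is correct and is exactly the standard Chernoff/Hoeffding plus union bound that the paper has in mind when it says the lemma ``follows directly from the sharp concentration of the binomial distribution and Chernoff's inequality''; the paper does not spell out any further details. The only additional content you provide is the (routine) observation that the $4^n$ subset pairs are beaten by the $\exp(-c\eps^2 n^2)$ tail, together with the harmless bookkeeping for doubly counted edges in $U\cap W$.
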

	
	We will also use the following \emph{counting lemma} comparing the number of subgraphs of two graphs in 
	terms of the cut-norm. This can be viewed as the global counting lemma for the weak regularity lemma 
	of Frieze and Kannan~\cite{fk99} and it can be found in~\cite{ls06}*{Lemma~4.1}.
	\begin{lemma} \label{l:cutnorm-copies}
		\pushQED{\qed}
		For every graph $H$ and all functions $\ff$, $\df\colon [n]^{(2)}\to [0,1]$ we have
		\[
		\big|\Lambda_H(\ff) - \Lambda_H(\df)\big| 
		\leq 
		2e(H)\cdot\cutnorm{\ff - \df}\qedhere
		\]
	\end{lemma}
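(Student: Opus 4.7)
The plan is a standard telescoping/hybrid argument, swapping edge factors of $\ff$ for those of $\df$ one at a time. Enumerate the edges of $H$ as $e_1,\dots,e_m$ with $m=e(H)$, and for $0\leq k\leq m$ define the hybrid density
\[
\Lambda^{(k)}=\frac{1}{n^\l}\sum_{v_1,\dots,v_\l\in[n]}\prod_{i\leq k}\df(v_{e_i})\prod_{i>k}\ff(v_{e_i}),
\]
so that $\Lambda^{(0)}=\Lambda_H(\ff)$ and $\Lambda^{(m)}=\Lambda_H(\df)$. By the triangle inequality it suffices to bound each one-step difference $|\Lambda^{(k)}-\Lambda^{(k-1)}|$ by $2\cutnorm{\ff-\df}$ and then sum over $k$.

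Fix $k$ and write $e_k=ab$; the one-step difference replaces $\ff(v_av_b)$ by $\df(v_av_b)$, so the summand becomes $(\df-\ff)(v_av_b)$ times a product of $\ff$- and $\df$-values on the remaining edges of $H$. Freeze the $\l-2$ coordinates $v_j$ with $j\notin\{a,b\}$. Since $e_k$ is the \emph{only} edge of $H$ joining $a$ and $b$, the remaining product factorises as $C\cdot g(v_a)\cdot h(v_b)$, where $C\in[0,1]$ collects the factors on edges disjoint from $\{a,b\}$ (hence constant in $v_a,v_b$) and $g,h\colon[n]\to[0,1]$ collect the factors on edges incident to exactly one of $a,b$. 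The whole point of swapping a single edge at a time is to obtain this clean bilinear factorisation.

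The core inner step is then the bilinear estimate
\[
\left|\sum_{v_a,v_b}(\df-\ff)(v_av_b)\,g(v_a)\,h(v_b)\right|\leq 2n^2\cutnorm{\ff-\df},
\]
proved by the layer-cake decomposition $g(v)=\int_0^1\ind_{\{g(v)>s\}}\,\mathrm{d}s$ (and similarly for $h$): the sum rewrites as a double integral over $s,t\in[0,1]$ of $e_{\df-\ff}(A_s,B_t)$, where $A_s,B_t\subseteq[n]$ are the level sets of $g,h$. Each integrand is controlled by $n^2\cutnorm{\ff-\df}$, with the factor $2$ accounting for the asymmetry of the cut-norm as defined here (no absolute value, so one must bound both $e_{\df-\ff}(A,B)$ and its negative). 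Multiplying by $C\leq 1$, summing over the $n^{\l-2}$ frozen coordinates and dividing by $n^\l$ yields $|\Lambda^{(k)}-\Lambda^{(k-1)}|\leq 2\cutnorm{\ff-\df}$, and telescoping over $k=1,\dots,e(H)$ gives the lemma.

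The proof is essentially routine and I do not expect any genuine obstacle; the only small subtlety is the reduction from cut-distance bounds for $0/1$-indicators to weighted functions $g,h\in[0,1]$ via the layer-cake formula, which is the standard ingredient making the cut-norm a counting-friendly notion.
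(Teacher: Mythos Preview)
The paper does not give its own proof of this lemma; it simply cites Lov\'asz--Szegedy~\cite{ls06}*{Lemma~4.1} and appends a \qed. Your telescoping-plus-layer-cake argument is exactly the standard proof of that result, and it is correct.

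One small caveat worth flagging: your remark that ``the factor $2$ accounts for the asymmetry of the cut-norm as defined here (no absolute value)'' does not actually do the job. With the literal definition $\cutnorm{g}=n^{-2}\max_{U,W}e_g(U,W)$, one only controls $e_{\df-\ff}(A,B)$ from above, not $-e_{\df-\ff}(A,B)$, and no fixed constant can repair this (take $\ff\equiv 0$, $\df\equiv 1$). The paper's displayed definition is almost certainly missing an absolute value---note that in the Appendix (equation~\eqref{eq:cut-norm-products}) the same quantity is written with $|\cdot|$---and with that reading your layer-cake step gives $|e_{\df-\ff}(A_s,B_t)|\leq n^2\cutnorm{\ff-\df}$ directly, so the argument goes through (indeed with constant $e(H)$ rather than $2e(H)$). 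This is a wrinkle in the paper's exposition, not in your proof.
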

	
	We conclude this section with the following fact that the cut-norm controls most vertex degrees into given subsets 
	(see, e.g., the deduction of~\ref{it:trans1-1deg} of Theorem~\ref{t:transference1} from~\ref{it:trans1-1}).
	\begin{lemma} \label{l:degree-cutnorm}
		For every $\eps>0$ and all functions $\ff$, $\df\colon [n]^{(2)}\to [0,1]$ with $\cutnorm{\ff - \df} \leq \eps$
		the following holds. 
		For all $U\subseteq [n]$ with  $|U| > 2 \eps^{1/3}n$, all but at most $\eps^{1/3}n$ vertices $v \in [n]$ 
		satisfy
		\[
		|d_\ff(v, U) - d_\df(v, U)| \leq \eps^{1/3}|U|\,.
		\]
	\end{lemma}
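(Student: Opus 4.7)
The plan is to apply the cut-norm bound directly to the two sets of vertices whose degree into $U$ deviates too much, in either direction. Write $g = \ff - \df$ and define the bad sets
\[
Y^+ = \{v \in [n]\colon d_g(v, U) > \eps^{1/3}|U|\}
\qand
Y^- = \{v \in [n]\colon d_g(v, U) < -\eps^{1/3}|U|\}\,.
\]
The goal is to show $|Y^+| + |Y^-| \leq \eps^{1/3} n$, since the remaining vertices clearly satisfy the desired conclusion.

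The key observation is that summing the vertex degrees into $U$ exactly reproduces the edge count between the bad set and $U$, up to the diagonal. Concretely, since $g(vv) = 0$,
\[
\sum_{v \in Y^+} d_g(v, U)
=
\sum_{v \in Y^+} \sum_{u \in U \setminus \{v\}} g(vu)
=
e_g(Y^+, U)\,.
\]
By the definition of $Y^+$, the left-hand side is strictly larger than $\eps^{1/3}|U|\cdot|Y^+|$, while by the cut-norm hypothesis the right-hand side is at most $\cutnorm{g}\,n^2 \leq \eps n^2$. Combining these two estimates and using $|U| > 2\eps^{1/3} n$ gives
\[
|Y^+|
<
\frac{\eps n^2}{\eps^{1/3}|U|}
=
\frac{\eps^{2/3} n^2}{|U|}
<
\frac{\eps^{1/3}n}{2}\,.
\]
The identical argument applied to $-g$ (using that the cut-norm is symmetric under sign change, as can be seen by swapping the roles of the maximising sets or interpreting it with absolute values, as in Theorem~\ref{t:transference1}\,\ref{it:trans1-1}) yields $|Y^-| < \eps^{1/3} n/2$, and therefore $|Y^+ \cup Y^-| < \eps^{1/3} n$, as required.

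There is essentially no substantive obstacle here; the argument is a one-shot application of the cut-norm to a carefully chosen pair of test sets, and the exponent $1/3$ is exactly what falls out of balancing the two uses of $\eps$ (one in the deviation threshold, one in the cardinality of~$U$). The only point worth flagging is the need to treat both signs of the deviation, which requires applying the cut-norm bound twice rather than relying on a single one-sided estimate.
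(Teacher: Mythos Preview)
Your proof is correct and follows essentially the same approach as the paper: define the set of vertices with excessive positive deviation, bound its size via $e_g(Y^+,U)\leq n^2\cutnorm{g}$, and then repeat for the negative deviation. The paper's argument is line-for-line the same, with your $Y^+$ playing the role of its set~$S$.
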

	\begin{proof}[Proof of Lemma~\ref{l:degree-cutnorm}]
		Let $S$ be the set of vertices $v$ with $d_\ff(v, U) - d_\df(v, U) \geq \eps^{1/3}|U|$.
		We have
		\begin{align*}
			n^2 \cutnorm{\ff - \df} 
			\geq 
			\sum_{v \in S} d_\ff(v, U)- d_\df(v, U)
			\geq  
			\eps^{1/3} |U||S|
			\geq 
			2\eps^{2/3} n|S|
		\end{align*}
		and $|S|\leq \eps^{1/3}n/2$ folllows. Similarly, one can show that there are at most $\eps^{1/3}n/2$ vertices~$v$ such that  $d_\df(v, U) - d_\ff(v, U) \geq \eps^{1/3}|U|$ and the claimed bound follows.
	\end{proof}

	\subsection{Proof of Lemma~\ref{l:strictly-min}}	
	\label{sec:pf-unbounded}
	Given $\l\geq 3$ and $\delta>0$, we fix auxiliary constants 
	\[
	L=2(\l-1)(\l-2)+2\,,\quad
	\nu = \frac{\delta^{4\ell^4 +2\ell^2}}{10^{4\ell^2}}\,,\quad
	\alpha = \frac{\nu}{8\ell^2}
	\,,\qand
	\zeta=\delta^{10\l^2+2}\,.
	\]
	Moreover we fix auxiliary constants $\eps_1$ and $\eps_2$ and the desired $C$ to satisfy the hierarchy\footnote{Here, $x \gg y$ means that for a given $x$, $y$ is taken sufficiently small so that all the following claims hold.}
	\[
	\delta\,,\l^{-1} \gg \nu, \zeta \gg \alpha \gg \eps_2\gg \eps_1 \gg C^{-1}\,.
	\] 
	Assume that $G \in G(n,p)$ satisfies the conclusion of Theorem~\ref{t:transference} and Theorem~\ref{t:transference1}~\ref{it:trans1-3}  
	for $\eps = \eps_1$. Let $\norm{\,\cdot\,}$ denote the norm given by Theorem~\ref{t:transference}. 
	Moreover, we may assume that all vertices in $G$ have degree $(1 \pm \eps_1)pn$, since all these properties hold asymptotically almost surely.
	
	For the moreover-part of the lemma, 
	we have in addition  $p=p(n)\leq n^{-\frac{2\l-5}{\l^2-\l-4}}/\omega(n)$ for some function 
	$\omega$ tending to infinity. Similarly as in the proof of Lemma~\ref{l:rainbow} this allows us to assume 
	\begin{equation}\label{eq:edgeG_l2}
		\big|E(G)\setminus E(G_\l)\big|
		\leq 
		\zeta pn^2\,.
	\end{equation}
	By our choice of $\zeta$, this implies that the crucial assumption~\eqref{eq:delta-unbounded-undir} for~$G$ in 
	Lemma~\ref{l:strictly-min} extends to~$G_\l$ at the price of a small change of the constants. Namely, every $U\subseteq V(G_\l)$ with size $|U|\geq \delta^{5 \l^2} n$
		satisfies
		\[
			\big|\{u \in U\colon d_{G_\l,c}(u, U) \geq 7.9\delta p |U| \text{ for some colour } c\} \big|
			\geq 
			0.49|U|\,,
		\]
    and the proof given below 
	can be carried out in $G_{\l}$ as well.

	Hence, let us return to the proof in the original graph $G$. For a comparability sign $\diamond \in \{<,\,>\}$ set	
	$$	
	B^\diamond(U) = \{v \in U\colon  d_c^\diamond(v, U) \geq 4\delta p |U| \text{ for some colour } c\}\,.
	$$
	The assumption~\eqref{eq:delta-unbounded-undir} implies that  for every set $U$ with $|U|\geq \delta^{5 \ell^2} n$, we have
	\begin{equation} \label{eq:delta-unbounded}
		|B^<(U)| \geq \frac{|U|}{4}\quad \text { \ or \ }\quad |B^>(U)| \geq \frac{|U|}{4}.
	\end{equation}
	
	The condition~\eqref{eq:delta-unbounded} will be iterated to inductively build some structures in $G$, 
	as detailed in the following claim.  We will build subgraphs $F_t$ of $G$ for $t\in[L]$,
	which are non-strictly min-coloured or max-coloured and \textit{relatively dense} to $G$. Moreover, we consider the dense models 
	$D_t$ of those $F_t$ given by Theorem~\ref{t:transference}. The hypergraph $\Hc$ is used to keep track of cliques in $\bigcup_t D_t$. In the statement and proof, we usually identify a hypergraph $\Hc$ with its set of edges. In particular,~$|\Hc|$ is the number of edges in $\Hc$.
	
	\begin{claim} \label{c:hypergraphs-stronger}
		For every $t\in[L]$ there is a set of vertices $V_t$ disjoint from $V_1\cup \dots \cup V_{t-1}$ with $|V_t|\geq \frac{1}{25}\alpha \delta^{2t}n$,
		a comparability sign $\diamond_t \in \{<,\,>\}$, and a colour index $\psi_t \in \N \dcup \{ \star\}$ 
		such that the following holds:
		\begin{enumerate}[label=\alabel]
			\item \label{it:Fi} Each $v \in V_t$ is assigned a colour $c(v)$ and there is a graph $F_t\subseteq G$ 
			whose edges $vu$ satisfy $v \in V_t$ and $u \in N^{\diamond_t}_{c(v)}(v) \setminus (V_1 \cup \dots \cup V_t)$.
			\item \label{it:psi} If $\psi_t\in\NN$, then $c(v) = \psi_t$ for all $v \in V_t$. Otherwise, if $\psi_t=\star$, 
			then we have $|\{ v \in V_t\colon  c(v) = j \}| \leq \alpha |V_t|$ for every colour $j\in\NN$.
			\item \label{it:Di} There is a function $\df_t\colon [n]^{(2)} \to [0,1]$ with
			$$\|p^{-1}\ind_{F_t} - (1+\eps_1)\df_t \| \leq \eps_1\,,$$
			and a graph $D_t$ with
			$$\cutnorm{D_t - \df_t}\leq 7 \eps_1 \text{\quad and \quad} \cutnorm{D_t - p^{-1}\ind_{F_t}}\leq 9\eps_1\,.$$
			\item \label{it:Sv} 
			There is a $t$-partite $t$-uniform hypergraph $\Hc_t$ on $V_1 \cup \dots \cup V_t$ with
			$$|\Hc_t| \geq 10^{-2t}\delta^{t^2} |V_1| \cdots |V_t|$$
			and for any $(v_1, \ldots, v_t) \in \Hc_t$, there is a set $S(v_1, \ldots, v_t)$ disjoint from $V_1, \ldots, V_t$ of size at least $\delta^t n$ such that for $i \leq t$,
			$$N_{D_i}(v_i) \supseteq \{v_{i+1}, \ldots, v_t \}\cup S(v_1, \ldots, v_t)\,.$$
		\end{enumerate}
	\end{claim}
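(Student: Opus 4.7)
The plan is to proceed by induction on~$t$, building the stage-$t$ data from the stage-$(t-1)$ data via three essentially independent steps: a combinatorial extension using~\eqref{eq:delta-unbounded}, a pigeonhole dichotomy enforcing property~\ref{it:psi}, and an application of the transference machinery producing~$\df_t$ and~$D_t$. A single application of Theorem~\ref{t:transference} to~$G$ at the outset fixes the norm $\|\cdot\|$ that serves every stage, and each stage invokes the theorem's per-subgraph guarantee for the~$F_t\subseteq G$ constructed at that stage.

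For the base case~$t=1$, I would apply~\eqref{eq:delta-unbounded} with $U=[n]$: after choosing the majority comparability sign~$\diamond_1$, at least~$n/4$ vertices~$v$ admit a witnessing colour $c(v)$ with $d_{c(v)}^{\diamond_1}(v, [n]) \geq 4\delta pn$. A pigeonhole dichotomy on the map $v\mapsto c(v)$ produces $(V_1,\psi_1)$: either some colour $j$ is used by at least an $\alpha$-fraction of the candidates (restrict~$V_1$ to that class and set $\psi_1=j$), or none is (retain all candidates and set $\psi_1=\star$). Defining $F_1\subseteq G$ as the union of the witnessing edges, Theorem~\ref{t:transference} supplies~$\df_1$ with $\|p^{-1}\ind_{F_1}-(1+\eps_1)\df_1\|\leq\eps_1$, Lemma~\ref{l:edge-sampling} samples~$D_1$, and the triangle inequality combined with Theorem~\ref{t:transference}\,\ref{it:transf-edgedist} yields the cut-norm bounds in~\ref{it:Di}. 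Applying Lemma~\ref{l:degree-cutnorm} with $U=[n]$ then ensures $d_{D_1}(v)\geq 3\delta n$ for all but a negligible subset of $v\in V_1$, and taking $\Hc_1=V_1$ together with $S(v_1)=N_{D_1}(v_1)\setminus V_1$ of size at least $\delta n$ completes the base case.

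The induction step extends each parent tuple $(v_1,\dots,v_{t-1})\in\Hc_{t-1}$ by applying~\eqref{eq:delta-unbounded} to $U=S(v_1,\dots,v_{t-1})$, which is legal since $|U|\geq\delta^{t-1}n\geq\delta^{5\ell^2}n$ for $t\leq L$. After fixing the majority sign~$\diamond_t$ across parents (loss factor~$2$), the key combinatorial observation is that any candidate $v_t$ has $d_G(v_t)\leq 2pn$ and therefore can witness $d_c^{\diamond_t}(v_t,U)\geq 4\delta p\cdot\delta^{t-1}n$ for at most $1/(2\delta^t)$ distinct colours across all parents containing it; assigning the majority colour $c(v_t)$ to each~$v_t$ introduces a further loss factor of $2\delta^t$ and yields the cleaned tuple collection~$\Kc_t'$. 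A pigeonhole on $v_t\mapsto c(v_t)$ then extracts $(V_t,\psi_t)$ with property~\ref{it:psi} just as in the base case; the construction of $F_t$, $\df_t$, and $D_t$ proceeds verbatim; and Lemma~\ref{l:degree-cutnorm} applied with each $U=S(v_1,\dots,v_{t-1})$ transfers the sparse degree bound $d_{F_t}(v_t,U)\geq 4\delta p|U|$ into its dense counterpart $d_{D_t}(v_t,U)\geq 3\delta|U|$ outside an exceptional set of size $\eps_1^{1/3}n$ per parent. Finally, $\Hc_t$ is defined as the tuples of $\Kc_t'$ with $v_t\in V_t$ that survive this degree transfer, and $S(v_1,\dots,v_t)=N_{D_t}(v_t)\cap S(v_1,\dots,v_{t-1})\setminus(V_1\cup\cdots\cup V_t)$ of size at least $\delta^t n$.

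The part I expect to be most delicate is the simultaneous bookkeeping of $|V_t|$ and $|\Hc_t|$: the $\psi_t$-dichotomy has a natural vertex-weighted variant (directly guaranteeing $|V_t|\geq\frac{1}{25}\alpha\delta^{2t}n$ together with property~\ref{it:psi}) and a tuple-weighted variant (directly guaranteeing $|\Hc_t|\geq 10^{-2t}\delta^{t^2}|V_1|\cdots|V_t|$), and these two notions of density can in principle diverge; reconciling them into a single construction that preserves both lower bounds simultaneously through all $L$ stages is where the generous slack in the auxiliary constants $\nu$, $\alpha$, and $\zeta$ must be spent carefully.
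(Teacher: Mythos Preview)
Your inductive skeleton matches the paper's, but two concrete mechanisms are missing, and both are load-bearing.

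The first is a random bipartition at every stage. In the paper, before the dichotomy that produces~$V_t$, the pool of candidate vertices is split randomly into a set $W_t$ (which will contain~$V_t$) and its complement $Y_t$; the graph $F_t$ is then defined with edges only from~$V_t$ into~$Y_t$. You skip this and take $F_1$ to be ``the union of the witnessing edges''. The difficulty is that property~\ref{it:Fi} forces every edge of $F_t$ to have its second endpoint outside $V_1\cup\dots\cup V_t$, in particular outside~$V_t$. When $\psi_t=\star$ the set $V_t$ can have size of order~$n$, whereas a witnessing neighbourhood $N^{\diamond_t}_{c(v_t)}(v_t)\cap S(v_1,\dots,v_{t-1})$ has size only $4\delta^t pn$ and may lie almost entirely inside~$V_t$; after the mandatory deletion there is no lower bound left, so your assertion $d_{F_t}(v_t,U)\geq 4\delta p|U|$ is unjustified. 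The random split sidesteps this: by Chernoff, each witnessing neighbourhood retains at least $1.5\delta^t pn$ vertices in $Y_t$, which is disjoint from $V_t\subseteq W_t$ by construction, and it is this bound that the paper feeds into Lemma~\ref{l:degree-cutnorm}.

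The second gap is the one you yourself flag in your final paragraph, and it is not resolved by slack alone. The paper's device is to pass from the full candidate set $W_t$ to the subset $Z_t$ of vertices whose degree in the auxiliary bipartite graph $J'$ (with parts $\Hc'_{t-1}$ and the candidates) is at least half the average. This minimum-degree restriction is precisely what synchronises the vertex count and the tuple count: once every surviving candidate is incident to at least $\tfrac{1}{50}|\Hc_{t-1}|\delta\xi^2$ parent tuples, the $\psi_t$-dichotomy may shrink $Z_t$ to any $V_t\subseteq Z_t$ with $|V_t|\geq\alpha|Z_t|$ and still retain at least $|V_t|\cdot\tfrac{1}{50}|\Hc_{t-1}|\delta\xi^2$ edges of $J^*$, which directly yields $|\Hc_t|\geq 10^{-2t}\delta^{t^2}|V_1|\cdots|V_t|$. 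Without this step, restricting to a single colour class in the case $\psi_t\in\NN$ can destroy almost all tuples, and no choice of $\nu$, $\alpha$, $\zeta$ repairs a bound that must be proportional to $|V_1|\cdots|V_t|$.
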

	
	\begin{proof}[Proof of Claim~\ref{c:hypergraphs-stronger}]
		We start the induction with $t=1$. 
		Let $B = B^{\diamond_1}(V(G))$ be the set of~$n/4$ vertices given 
		by~\eqref{eq:delta-unbounded}. For $v \in B$, let $c(v)$ be a colour in which 
		$$d^{\diamond_1}_{c(v)}(v) \geq 4 \delta pn .$$ 
		
		Moreover, we wish to transfer a \textit{bipartite} graph $F_1$. To this end,  let each vertex of $B$ be placed into a set $Z_1$ independently at random with probability $1/2$, and let $Y_1 = V(G) \setminus Z_1$.  We may assume that $|Z_1| \geq n/10$, and each $v \in Z_1$ satisfies
		\begin{equation} \label{eq:deg-basis}
			d^{\diamond_1}_{c(v)}(v, Y_1) \geq 1.5 \delta pn
		\end{equation}
		as this happens asymptotically almost surely.
		
		If there is a colour $j$ such that $|c^{-1}(j)\cap Z_1| \geq \alpha |Z_1|$, then we set $V_1 = c^{-1}(j)$ and $\psi_1 = j$. Otherwise take $V_1 = Z_1$ and set $\psi_1 = \star$. Note that in either case, 
		\[
			|V_1| 
			\geq 
			\frac{\alpha n}{10}\,.
		\]
		Let $F_1$ be the subgraph of $G$ with
		\[
			E(F_1) = \{vu\colon  v \in V_1\,,\ u \in Y_1\,,\ v \diamond_1 u\,, \tand \varphi(v u ) = c(v)\}\,.
		\]		
		Let $\ind_{F_1}$ be the characteristic function of $F_1$. By Theorem~\ref{t:transference}, there is a  weighted graph 
		$\df _1\colon [n]^{(2)}\to[0, 1]$ with $\|p^{-1}\ind_{F_1} - (1+\eps_1)\df _1 \| \leq \eps_1$. In particular, 
		$\cutnorm{p^{-1}\ind_{F_1} - \df _1}\leq 2 \eps_1$ by Theorem~\ref{t:transference}~\ref{it:transf-edgedist}.
		
		Let $\df_1 '= \df_1 \! \restriction_{V_1 \times ([n] \setminus V_1)}$. Passing to a restriction of $\df_1$ is just a technicality to circumvent small overlaps between $\df_1, \ldots, \df_t$, and we will now show that $\cutnorm{\df_1 - \df_1'} \leq 6 \eps_1$. Denote $\ff_1 = p^{-1} \ind_{F_1}$.  To bound $\df_1 - \df_1'$, notice that, by definition of $\df_1'$ and since all edges of $F_1$ lie in $V_1 \times ([n] \setminus V_1)$,
		$$\frac{1}{n^2} \left| e_{\df_1'}([n])- e_{\ff_1}([n])\right|=
		\frac{2}{n^2}  \left| e_{\df_1}(V_1, [n] \setminus V_1) - e_{\ff_1}(V_1, [n] \setminus V_1)\right| \leq 2\cutnorm{\df_1 - \ff_1} \leq 4 \eps_1\,.
        $$
		Hence $e_{\df_1}([n])-e_{\df_1'}([n]) = e_{\df_1}([n])-e_{\ff_1}([n])  + e_{\ff_1}([n])-e_{\df_1'}([n]) \leq 6\eps_1 n^2$. Since $\df_1-\df_1' \geq 0 $ (pointwise), we have
		$$\cutnorm{\df_1 - \df_1'} = \frac{1}{n^2}(e_{\df_1}([n])-e_{\df_1'}([n])) \leq 6\eps_1.$$
		Moreover,
		$$\cutnorm{\df_1' - p^{-1}\ind_{F_1}} \leq 8\eps_1$$
		using the triangle inequality.
		
		Let $D_1$ be a graph sampled from $\df' _1$. Asymptotically almost surely, $\cutnorm{\ind_{D_1} - \df_1'} \leq \eps_1$ by Lemma~\ref{l:edge-sampling}. Therefore, using the triangle inequality, we may assume that
		$$\cutnorm{\ind_{D_1} - \df_1}\leq 7 \eps_1 \text{\quad and \quad} \cutnorm{\ind_{D_1} - p^{-1}\ind_{F_1}}\leq 9\eps_1\,.$$
		Let $\Hc_1$ be the set of vertices $v \in V_1$ with
		$$d_{D_1}(v, Y_1) \geq   \delta n\,.$$
		By Lemma~\ref{l:degree-cutnorm}, using~\eqref{eq:deg-basis}, $\cutnorm{ \ind_{D_1}-p^{-1}\ind_{F_1}} \leq 9\eps_1$, and taking  $\eps_1< \eps_2^4$, we have that $|\Hc_1| \geq |V_1| - \eps_2 n \geq \frac{|V_1|}{2}$. This completes the case $t=1$.

		Suppose the Claim holds for $1, 2, \ldots, t-1$. Let $X =  [n]\setminus (V_1 \cup \dots \cup V_{t-1})$. For $(v_1, \ldots, v_{t-1}) \in \Hc_{t-1}$, consider the set $S(v_1, \ldots, v_{t-1}) \subseteq X$ as stated in~\ref{it:Sv}. Specifically, $|S(v_1, \ldots, v_{t-1})| \geq \delta^{t-1} n$. Denote $\xi = \xi(t) := \delta^{t-1}$. Applying the assumption~\eqref{eq:delta-unbounded}, we obtain a set $B^\diamond(S(v_1, \ldots, v_{t-1}))$ of order at least $\frac{\xi n}{10}$ for some $\diamond =\diamond (v_1, \ldots v_{t-1})$ such that for every $v \in B^\diamond(S(v_1, \ldots, v_{t-1}))$ and some colour $c = c(v_1, \ldots, v_{t-1}, v)$,
		\begin{equation} \label{eq:deg-v-1}
			d^{\diamond}_{c}(v, S(v_1, \ldots, v_{t-1})) \geq 4 \delta \xi pn\,.
		\end{equation}
		
		The next step is to remove the dependency of $\diamond$ and $c$ on $(v_1, \ldots, v_{t-1})$. Firstly, let $\Hc'_{t-1}$ be a subhypergraph of $\Hc_{t-1}$ of order at least $\frac 12 |\Hc_{t-1}|$ such that $\diamond(v_1, \ldots v_{t-1}) = \diamond_{t}$ for all $(v_1, \ldots, v_{t-1}) \in \Hc'_{t-1}$.
		
		Now form an auxiliary bipartite graph $\Jn$ with parts $ \Hc'_{t-1}$ and $X \times \N $ as follows: an edge $((v_1, v_2, \ldots v_{t-1}), (v, c ))$ in $\Jn$ means that $v \in S(v_1, \ldots, v_{t-1})$ and
		\begin{equation}	~\label{eq:deg-v}
			d_c^{\diamond_{t}}(v, S(v_1, \ldots, v_{t-1})) \geq 4 \delta \xi p n\,.
		\end{equation}
		Since every $(v_1, \ldots v_{t-1}) \in \Hc'_{t-1}$ is contained in at least $\xi n /10$ edges in $\Jn$ (one for each element of $B^{\diamond_t}(v_1, \ldots, v_{t-1})$), we have $$|\Jn|
		\geq \frac 12 |\Hc_{t-1}|\cdot \frac{\xi n}{10}\,.$$
		
		Let $X' \subset X$ be the set of vertices $v$ such that some $(v, c)$ is incident to an edge of $\Jn$, and note that for each $v$, there are at most $(3\delta \xi)^{-1}$ such colours $c$ -- this follows from $d_c(v) \geq 4\delta \xi pn$ and $d_G(v) \leq (1+\eps_1)pn$. 
		For each $v \in X'$, let $c(v)$ be the colour which maximises the degree of $(v, c)$ in  $\Jn$. Form $\Jn'$ from $\Jn$ by deleting all the vertices $(v, c^\dagger) $ with $c^\dagger \neq c(v)$; we have
		$$|\Jn'| \geq |\Jn| \cdot 3 \delta \xi \geq \frac {3}{20} |\Hc_{t-1}| \delta \xi^2 n\,.$$
		Since now each vertex $v \in X'$ is associated with a unique colour  $c(v)$, we may assume that one vertex part of $\Jn'$ is just $X'$.
		
		We wish the graph $F_t$ to be bipartite, so 
		let us split the set $X'$ as follows. Let $W_t \subseteq X'$ consist of vertices sampled from $X'$ independently at random with 
		probability $1/2$. Let $Y_t = X \setminus W_t$. With positive probability, 
		$$|\Jn'[E(\Hc_{t-1}'), W_t]|\geq \frac{3}{50}|\Hc_{t-1}| \delta \xi^2 n\,,$$
		and for each $((v_1, \ldots, v_{t-1}), v) \in \Jn'$ (recalling~\eqref{eq:deg-v}),
		\begin{equation} \label{eq:v-deg}
			d^{\diamond_{t}}_{c(v)}(v, Y_{t} \cap S(v_1, \ldots, v_{t-1})) \geq 1.5 \delta \xi pn\,,  
		\end{equation}
		where we used Chernoff bounds and the union bound. Thus we may assume that these two inequalities are satisfied.
		
		Now, let $Z_t \subseteq W_t$ be the set of vertices of degree at least $ \frac{1}{50}|\Hc_{t-1}| \delta \xi^2 $ in $J'$, and let $\Jn^*$ be the induced subgraph of $\Jn'$ on $(E(\Hc_{t-1}'), Z_t)$. 
		Since the vertices in $W_t \setminus Z_t$ were incident to at most $ \frac{1}{50}|\Hc_{t-1}| \delta \xi^2 n $ edges in total, we have
		$$|\Jn^*|\geq \frac{1}{25}|\Hc_{t-1}| \delta \xi^2 n\,.$$
		Recalling that $\Jn^*$ is a bipartite graph on the vertex sets $(E(\Hc_{t-1}'), Z_t)$, we have the lower bound
		\begin{equation}    \label{eq:Wt-size}
			|Z_t| 
			\geq \frac{|\Jn^*|}{|\Hc_{t-1}|} 
			\geq \frac{1}{25}  \delta \xi^2 n 
			= \frac{1}{25} \delta^{1+2(t-1)} n  
			\geq \frac{1}{25}\delta^{2t}n\,.
		\end{equation}

		Now, to ensure~\ref{it:psi}, if there is a colour $j$ such that
		$$Z_{t,j}=\{v \in Z_t\colon c(v) = j\}$$
		contains at least $\alpha |Z_t|$ vertices, let $V_t = Z_{t,j}$ and set $\psi_t = j$ (recalling that $\alpha \ll \delta, \ell^{-1}$ is a constant).
		Otherwise, set $V_t = Z_t$ and $\psi_t = \star$. 
		Using the minimum degree of the vertices from $V_t$, we have
		$$\big|\Jn^*[E(\Hc_{t-1}'), V_t]\big|
		\geq 
		\frac{1}{50}|\Hc_{t-1}| \delta \xi^2 |V_t|\,.$$
		Moreover, $|V_t| \geq \alpha |Z_t| \geq \frac{1}{25} \alpha \delta^{2t}n$, as required by the claim.

		Let $F_t$ be the subgraph of $G$ with
		$$E(F_t) = \{vy\colon  v \in V_t, y \in Y_t, v \diamond_t y , \tand \varphi(v y ) = c(v)\}\,.$$
		Recall that if $((v_1, \ldots, v_{t-1}), v_{t}) \in \Jn^{*}$, then by~\eqref{eq:v-deg}, 
		\begin{equation}
			\label{eq:deg-ft}
			d_{F_{t}}(v_{t}, S(v_1, \ldots, v_{t-1}) \cap Y_{t}) \geq 1.5 \delta \xi pn= 1.5 \delta^{t}pn\,.
		\end{equation}
		
		Let ${\ff}_{t} = p^{-1}\ind_{F_t}$. By Theorem~\ref{t:transference}, there is a function $\df_t: [n]^{(2)} \to [0,1]$ such that
		$$\|{\ff}_{t} - (1+\eps_1)\df _{t} \| \leq \eps_1\,.$$
		Let $\df_t '= \df_t  \! \restriction_{V_t \times ([n] \setminus (V_1 \cup \dots V_t))}$, and let $D_t$ be a graph sampled from $\df_t'$. By the same argument as in the induction basis, we may assume that
		$$\cutnorm{D_t - \df_t} \leq 7 \eps_1 \text{\quad and \quad } 
		\cutnorm{D_t - p^{-1}\ind_{F_t}} \leq 9\eps_1\,,$$
		where $D_t$ stands for the indicator function $\ind_{D_t}$.

		Let $\Hc_{t}$ consist of $t$-tuples $(v_1, \ldots, v_{t})$ such that $v_{t} \in N_{\Jn^{*}}(v_1, \ldots, v_{t-1})$ and
		\begin{equation} \label{eq:deg-v-D}
			d_{D_{t}}(v_{t}, S(v_1, \ldots, v_{t-1}) \cap Y_{t}) \geq \delta^{t} n\,.  
		\end{equation}
		Using Lemma~\ref{l:degree-cutnorm} and~\eqref{eq:deg-ft}, for each $(v_1, \ldots, v_{t-1}) \in \Hc_{t-1}'$, there are at most $\eps_2 n$ vertices $v_t \in N_{\Jn^{*}}(v_1, \ldots, v_{t-1})$ violating~\eqref{eq:deg-v-D} (recalling that $\eps_2 > \eps_1^4$),  so indeed
		\begin{align*}
			|\Hc_{t}| 
			&\geq |\Jn^{*}[E(\Hc_{t-1}'), V_t]| - \eps_2 n^{t}\\ 
			&\geq \frac {1}{100} |\Hc_{t-1}| \delta \xi^2 |V_t|\\
			& \geq   10^{-2t} \delta^{(t-1)^2 + 2t -1} |V_1|\ldots |V_t|\\ 
			&=  10^{-2t}\delta^{t^2} |V_1|\ldots |V_t|\,,
		\end{align*}
		where we used the inductive hypothesis in the second line.
		
		Finally, we set $S(v_1, \ldots v_{t})= N_{D_{t}}(v_{t}) \cap S(v_1, \ldots, v_{t-1}) \cap Y_{t}$ 
		to obtain a set which satisfies asserting~\ref{it:Sv} of Claim~\ref{c:hypergraphs-stronger}.
	\end{proof}

	For the remainder of the proof, we do not need properties~\ref{it:Di} and~\ref{it:Sv}, but only the following consequences. We remark that it is crucial that the relative density of $K_{|M|}$-copies mentioned in ~\ref{it:Sv-weaker} (denoted $\nu$) does not depend on $\alpha$, but only on $\delta$ and $\ell$. On the other hand $|V_i|/n$ may depend on $\alpha$.

	For every subset $M\subseteq [L]$ below we show
	\begin{enumerate}[label=\Alabel]
		\item \label{it:Sv-weaker} 
		The graph $\bigcup_{i \in M}D_i $ contains at least $\nu n\prod_{i \in M}|V_i|$ { copies of } $K_{|M|+1}. $		
		\item \label{it:Di-w} If $|M| \leq \ell$,  then
		\[
		p^{-\binom \l 2} \cdot \Lambda_{K_\ell} \bigg(\bigcup_{i \in M} F_{i}\bigg)
		\geq 
		\Lambda_{K_\ell} \bigg(\bigcup_{i \in M} D_{i}\bigg)- \eps_2\,.
		\]
	\end{enumerate}
	We first show that part~\ref{it:Sv} of Claim~\ref{c:hypergraphs-stronger} implies~\ref{it:Sv-weaker}. Fix any $(v_1, \ldots, v_ L) \in \Hc_ L$, and let $|V^*_{ L+1}| = S(v_1, \ldots, v_ L)$, so $|V^*_{ L+1}| \geq \delta^ L n$. Now, for each $v_{ L+1} \in V^*_{ L+1}$, the vertices $\{v_i\colon i \in M\} \cup \{v_{ L+1} \}$ form a clique in $\bigcup_{i \in M}D_i$, since $N_{D_i}(v_i)$ contains $v_j$ for $j>i$ by~\ref{it:Sv}. 
	The number of choices for $(v_i\colon i \in M) $ contained in some edge $(v_1, \ldots, v_ L) \in \Hc_ L$ is at least
	$$|\Hc_ L| \left(\prod_{i \in [ L] \setminus M}|V_i| \right)^{-1} \geq 10^{-2 L}\delta^{ L^2}\prod_{i \in M} |V_i|.$$
	Putting these two bounds together, we obtain at least $n \delta^ L \cdot 10^{-2 L}\delta^{ L^2}\prod_{i \in M} |V_i|$ copies of~$K_{|M|+1}$, which implies~\ref{it:Sv-weaker} since $L \leq 2 \ell^2$.
	
	Secondly, we claim that the $D_i$ satisfy~\ref{it:Di-w}. Let $D = \bigcup_{i \in M} D_i$, $\df = \sum_{i \in M} \df_i$, 
	and  $F = \bigcup_{i \in M} F_i$, so that $\ind_{F} = \sum_{i \in M} \ind_{F_i}$. By the triangle inequality and 
	part~\ref{it:Di} of Claim~\ref{c:hypergraphs-stronger}, we have
	$$\Big\| p^{-1}\ind_{F} - (1+\eps_1)\sum_{i \in M} \df_i \Big\|\leq \eps_1 \ell.$$
	Hence, by Theorem~\ref{t:transference} (applied with $\eps = \eps_1 \ell$),  and taking $\eps_1$ sufficiently small depending on $\eps_2$, we have
	$$\Lambda_{K_\l}(p^{-1} \ind_F) \geq \Lambda_{K_\l}(\df) - \eps_1 \ell^{3\ell^2} \geq \Lambda_{K_\l}(\df) - \frac{\eps_2}{2}. $$
	Moreover, $\cutnorm{\df_t - D_t} \leq 7\eps_1$ for $t \in [L]$, so using Lemma~\ref{l:cutnorm-copies}, we have
	$\Lambda_{K_\l}(\df)  \geq \Lambda_{K_\l}(D)- \frac{\eps_2}{2}. $  It follows that
	$$\Lambda_{K_\l}(p^{-1} \ind_F) \geq \Lambda_{K_\l}(D) - \eps_2\,,$$
	as required for the proof of~\ref{it:Di-w}.

	We now complete the proof of Lemma~\ref{l:strictly-min}. Let $I' \subset [L]$ be a set of order $L/2$ such that~$\diamond$ is constant on $I'$ and, without loss of generality, we may assume that $\diamond_i = <$ for $i \in I'$. Moreover, let $I \subset I'$ be a set of order $\ell-1$ 
	such that 
	\begin{enumerate}[label=\rmlabel]
		\item either $\psi_i \neq \star$ for $i \in I$ and $\psi$ is constant or injective on~$I$, 
		\item or $\psi_i=\star$ for $i \in I$.
	\end{enumerate}	
	Let
	$$\theta = \prod_{i \in I} \frac{\left|V_i\right|}{n}, \quad F = \bigcup_{i \in I} F_i \text{ \quad and \quad} D = \bigcup_{i \in I} D_i\,,$$ 
	and note that $\theta$ is bounded from below by a constant depending on $\alpha, \delta, \ell$, due to the lower bound on $|V_i|$
	in Claim~\ref{c:hypergraphs-stronger}. By assertion~\ref{it:Sv-weaker}, $D$ contains at least $\nu \theta n^\ell$ copies of $K_{\ell}$. 
	Hence, owing to~\ref{it:Di-w} and $ \eps_2 \leq \frac{1}{10} \nu \theta$, the graph~$F$  contains at least 
	$\frac 12 \nu \theta n^{\ell} p^{\binom \ell 2}$ copies of $K_{\ell}$.
	
	All these copies are non-strictly min-coloured by construction of $F$ (i.e., $\varphi(uv) = c(u) $ for $u< v$, $uv \in F$), and now we will use~\ref{it:psi}  and the choice of $I$ to show that there is actually a strictly min-coloured or a monochromatic copy.
	We first show that each copy of~$K_{\ell}$ in $F$ has exactly one vertex in $V_i$ for $i \in I$. Let $v_1 < v_2 < \dots < v_\ell$ be the vertex set of a $K_\ell$ in $F$, and recall the property~\ref{it:Fi} of Claim~\ref{c:hypergraphs-stronger} for~$F$. 
	Since all the edges in $F$ have the starting point in $\bigcup_{i \in I}V_i$, we have that  $\{v_1, \ldots, v_{\ell-1}\} \subseteq \bigcup_{i \in I}V_i$. But each $V_i$ is an independent set in $F$, so it contains at most one (and hence exactly one) vertex from $\{v_1, \ldots, v_{\ell -1}\}$.
	
	If $\psi_{i}\neq \star$ for $i \in I$, any copy of $K_\ell$ in $F$ is min-coloured (in case $\psi$ is injective on $I$) or  monochromatic (in case $\psi$ is constant on $I$); to see this, recall that by~\ref{it:psi} of Claim~\ref{c:hypergraphs-stronger}, if $uv \in E(F_i)$, $\varphi(uv) = \psi_i$.
	
	Suppose that $\psi_{i}=\star$ for $i \in I$. For $i, j \in I$, let $\Kc_{ij}$ be the collection of $K_{\ell}$-copies containing vertices $v_i \in V_{i}$ and $v_j \in V_{j}$ with $c(v_i) = c(v_j)$. We will  show that for all $i \neq j \in I$
	\begin{equation} 		\label{eq:vertex-conflict}
		|\Kc_{ij}| \leq 3\alpha \theta \npl.
	\end{equation} 
	(This follows easily from Theorem~\ref{t:transference1}~\ref{it:trans1-2} when each colour class in $V_i$ is of size $\alpha |V_i|$, but we need to be slightly more careful about smaller vertex classes.)
	
	Partition the colours in $c[V_i]$ into \textit{clusters} $1, \ldots, m$ with $m \leq 2\alpha^{-1}$ such that for each $k \in [m]$, the proportion of vertices in cluster $k$ in $V_{i}$ lies in $[\alpha, 2 \alpha]$. Note that such a partition exists since $|c[V_{i}]| \leq \alpha |V_{i}|$ by~\ref{it:psi} of Claim~\ref{c:hypergraphs-stronger}.
	For each cluster $k \in [m]$, let $\beta(k) $ (resp.~$\gamma(k)$) be the proportion of vertices $v$ in $V_{i}$ (resp.~$V_{j}$) such that $c(v)$ is in cluster $k$, so $\alpha \leq \beta(k) \leq 2 \alpha$. By Theorem~\ref{t:transference1}~\ref{it:trans1-2}, the number of $K_{\ell}$-copies with vertices in cluster $k$ in both $V_{i}$ and $V_{j}$ is at most
	$$ (\beta(k) \gamma(k)\theta + \eps_1) \npl \leq (2\alpha \gamma(k) \theta + \eps_1) \npl.$$
	Summing over $k \in [m]$, corresponding to clusters $1, \ldots, m$, and using $\sum_{k \in [m]} \gamma(k) \leq 1$, we obtain
	$$|\Kc_{ij}| \leq \sum_{k \in [m]}(2 \alpha \gamma(k)\theta+\eps_1) \npl \leq (2\alpha \theta + 2\alpha^{-1} \eps_1) \npl.$$
	Taking $\eps_1 \leq \frac 13 \alpha^2 \theta$ implies~\eqref{eq:vertex-conflict}.
	
	The  bound~\eqref{eq:vertex-conflict} holds for any $i, j$, so taking $\alpha < \nu / (8\ell^2)$, we obtain $$\left| \bigcup_{i < j \in I} \Kc_{ij} \right| \leq 3 \ell^2 \alpha \theta \npl \leq  \frac {3}{8} \nu \theta \npl.$$ Recalling that $F$ contains at least $\frac 12 \nu \theta \npl$ copies of $K_{\ell}$, it follows that there is a copy outside $\bigcup_{i< j \in I} \Kc_{ij} $, which is then strictly min-coloured.
	
	This completes the proof of Lemma~\ref{l:strictly-min}.\qed
	\begin{rem} \label{remark:clique}
		The fact that $K_\ell$ is a clique was only used to show that each copy of $K_\ell$ in~$F$ has at most one vertex in each $V_i$ for $i \in I$. In the concluding remarks, we will discuss to what extent our proof extends to general graphs $H$.
	\end{rem}

	\section{Concluding remarks}\label{sec:conclusion}
	\subsection{Thresholds for canonical Ramsey properties for general graphs}
	Recall that for an ordered graph $H$, we defined $\hat p_H$ as the threshold for the property $\Gnp \clra (H)$ and 
	Theorem~\ref{t:main} establishes $\hat p_{K_\l} = n^{-\frac{2}{\l+1}}$.  The problem of determining the threshold $\hat{p}_H$ for ordered graphs $H$ which are not complete is still open, but there are some partial results.
	
	Firstly, Alvarado, Kohayakawa, Morris, and Mota~\cite{morris23pers} studied a closely related problem for even cycles $C_{2\ell}$. Their result implies that for $p = C n^{-1/m_2(C_{2\ell})} \log n$, any colouring of~$\Gnp$ contains a canonical copy of the cycle $C_{2 \ell}$. However, in their work the ordering of the random graph $\Gnp$ is determined after the colouring.
	
	Secondly, for a strictly balanced graph $H$, our proof guarantees for $p\gg n^{-1/m_2(H)}$
	a canonical copy of $H$, but one cannot require a specific vertex ordering of~$H$.  
	This statement is shown using the following modification of Theorem~\ref{t:transference}, 
	which actually slightly simplifies the present proof of Lemma~\ref{l:strictly-min} for $K_\ell$ as well, but at the expense of introducing some additional formalism.
	For a collection of functions $f = (f_e)_{e \in H}$, define
	$$\Lambda^\dagger ( (f_e)_{e \in H}) =  \sum_{u_1, \ldots, u_\ell} \prod _{ij \in H} f_{ij}(u_i, u_j),$$
	that is, the density of $H$-copies in which the image of each edge $e \in H$ is weighted by $f_e$. 
	A small modification of Corollary 3.7 in~\cite{cg16} (which appears in~\cite{cgss14}*{page~17} in order to prove Theorem~\ref{t:transference1}) implies that if $\| \ff_e - \df_e\| = o(1)$ for $e \in E(H)$, then 
\[
	|\Lambda^\dagger ( (\ff_e)_{e \in H}) -\Lambda^\dagger ( (\df_e)_{e \in H}) | = o(1)\,.
\] 
Hence,  our proof can be carried out with the following modification. Assume that we have found our desired set of $\ell-1$ indices $I$, and that $\diamond_i = <$ for $i \in I$; we may relabel so that $I = [\ell-1]$. Then we can define $f_{ij} = F_i$ for $ij \in E(H)$ with $i<j$. Now, any  embedding $\beta \colon H \to G$ generated by the proof has the property that for $ij \in E(H)$ with $i<j$,  $\zeta(i)\zeta(j)$ lies in $F_i$, so its colour is determined by $\min(\zeta(i), \zeta(j))$.

    Returning to the issue of vertex ordering, when a pair $ij$ is \textit{not} an edge of $H$, the proof does not guarantee that $\zeta(i) < \zeta(j)$.
	
	\subsection{Canonical colourings in random hypergraphs} Furthermore, it would be interesting to investigate extensions of 
	Theorem~\ref{t:main} to $k$-uniform hypergraphs for $k\geq 3$. Namely, in their original work Erd\H os and Rado~\cite{er50} established a canonical Ramsey theorem for $k$-uniform hypergraphs. However, their proof for $k$-uniform hypergraphs 
	used Ramsey's theorem for $2k$-uniform hypergraphs and this seems to be an obstacle for transferring it 
	to random hypergraphs at the right threshold. Hence, for transferring their result to the random setting, it seems necessary to start with a proof which avoids the use of hypergraphs with larger uniformity. Such proofs can be found in~\cites{rrsss22,shelah95}.

	\appendix
	
	\origsection{Transference}  \label{app:transference}
    The purpose of this Appendix is to help the reader verify how Theorem~\ref{t:transference} follow from the proof of Conlon and Gowers~\cite{cg16}. For an informal discussion of the Conlon--Gowers approach, we also refer the interested reader to~\cite{cgss14}*{Section~3}.
    
	First we informally outline the proof of Theorem 9.3 from~\cite{cg16}, which corresponds to our  Theorem~\ref{t:transference1}. Then we state the formal claims that we need from~\cite{cg16}, and show how they are applied to deduce Theorem~\ref{t:transference}. The proof of this theorem is entirely contained in~\cite{cg16}, but some elements which we use (the norm $\norm{\cdot}$ and the dense model $\df$) are only defined within Theorem 4.5~in~\cite{cg16}. 
 
 The statements whose proofs we would like to expound are Theorem 9.3, and its corresponding deterministic result, Theorem 4.10. Unfortunately, these two proofs are not actually spelled out in~\cite{cg16}. Instead, the authors prove Theorem 9.1 and Theorem 4.5, which are Szemer\'edi-type results for random sets, and say that the proofs of Theorem 9.3 and Theorem 4.10 are `much the same'. Moreover, their setting is much more general -- they work with random subsets of a set $X$, which for us is just the set of edges of a complete graph $K_n$. In particular, for us, $|X| = \binom n2$. 
	
	Theorem 9.3 from~\cite{cg16} states that asymptotically almost surely, any subgraph $F$ of $\Gnp$ with $p = Cn^{-1/m_2(H)}$ can be approximated by a \emph{dense model} $D$ which asymptotically matches the edge distribution and the number of $H$-copies in $F$.
	
	As mentioned, instead of graphs, we work with functions from $[n]^{(2)}$ to $\R$, or weighted graphs. For a random graph $G\in\Gnp$, the \emph{associated measure} of $G$ is defined as $\mu = \mu_G =p^{-1}\ind_G$.  Given an $m$-tuple of functions 
	$\mu = (\mu_1, \ldots, \mu_m) \in \R^{[n]^{(2)}}$ (which will later be taken as the associated measures of $m$ independent copies of $\Gnpst$), Conlon and Gowers introduce the set of \textit{$(\mu, 1)$-basic anti-uniform functions} $\Phi_{\mu, 1}$, which have the key property that the number of $H$-copies in a weighted graph $\ff \leq m^{-1}(\mu_1 + \dots + \mu_m)$ can be bounded in terms of the inner products
	$$\max \left \{| \langle \ff, \phi \rangle|\colon \phi \in \Phi_{\mu, 1}  \right \}\,.$$

	The norm  $\norm{\cdot}$ is defined as
	$$\| \ff \| = \max \left \{| \langle \ff, \phi \rangle|\colon \phi \in \Phi_{\mu, 1}  \right \} \cup \{\| \ff \|_\square \}\,,$$
	where the term $\|\ff \|_\square$ is just appended to ensure that $\norm{\cdot}$ also controls the edge distribution of a weighted graph. This corresponds to Definitions 3.6 and 4.9 from~\cite{cg16}. 
	
	One caveat in this description is that $\norm{\cdot}$ only controls the number of $H$-copies under certain deterministic conditions on $\mu_1, \ldots, \mu_m$. In the context of graphs, these conditions, denoted (P0)--(P3$'$) in~\cite{cg16}*{Section~4}, imply the property that the corresponding random graphs have a sufficiently homogeneous edges distribution, which is a well-known necessary condition for all similar counting results in sparse random graphs.  To prove Theorem~\ref{t:transference1}, Conlon and Gowers show three statements. Firstly, asymptotically almost surely, the associated measures $\mu_1, \ldots, \mu_m$ of $\Gnpst$ with $\pst = Cn^{-1/m_2(H)}$ satisfy (P0)--(P3$'$).  Secondly, assuming (P0)--(P3$'$), for any $\ff \leq m^{-1}(\mu_1 + \dots + \mu_m)$ there is a \emph{dense model} $\df\colon [n]^{(2)} \to [0,1]$ with
	$$\|\ff - (1+\eps)\df \| \leq \eps\,.$$
	Thirdly, again assuming (P0)--(P3$'$), $\df$ is a useful approximation for $\ff$ in our context, since~$\norm{\cdot}$ controls the edge distribution and the number of $H$-copies in $\ff$ and $\df$. We have decided not to state properties (P0)--(P3$'$) since this would require reproducing large sections of~\cite{cg16} and introducing additional concepts.
	
	These three statements imply that asymptotically almost surely, any such function $\ff \leq m^{-1}(\mu_1 + \dots + \mu_m)$ has a suitable dense model. To reach the same conclusion for subgraphs of $\Gnp$, a small additional step is needed (cf.\ Proof of Theorem 9.1 in~\cite{cg16}). Given $m$ independent samples of $\Gnpst$ with $\pst = Cn^{-1/m_2(H)}$, let $G$ be the union of $U_1, U_2, \ldots, U_m$. Then $G$ is distributed as $\Gnp$ with $p = 1- (1-\pst)^m$, which is slightly smaller than $\pst m$. Thus the hypothesis $\ff \leq p^{-1}\ind_G$ does not quite imply that $\ff \leq m^{-1} \pst^{-1} (\ind_{U_1} + \dots + \ind_{U_m})= m^{-1}(\mu_1  \dots + \dots \mu_m)$. Still, since $p = \pst  m(1+o(1))$, this caveat can be resolved by slightly rescaling $\ff$, which we do at the start of the proof. We remark that this strategy of exposing $\Gnp$ in $m$ copies (for a large constant $m$) is used in~\cite{cg16} in order to be able to (define and) verify properties (P0)--(P3$'$).

	Now we formally state the claims which are used for deducing Theorem~\ref{t:transference}, and where they can be found in~\cite{cg16}. Say that $\mu_1, \ldots, \mu_m$ satisfy the property $\Pc(\eta, \lambda, d, m)$ if they satisfy properties (P0)--(P3$'$) stated in~\cite{cg16}*{Section 4}. 
	The following statement\footnote{Specifically, this claim is the first sentence of their proof of 9.1.} can be found in the proof of Theorem 9.1 in~\cite{cg16}. 
	\begin{lemma} \label{l:cg-gnp}
		Given $\eta, \lambda, d, m$, there is $C$ such that for $\pst  = Cn^{-1/m_2(H)}$ the following holds. If  $U_1, \ldots, U_m \in \Gnpst$ are mutually independent and $\mu_i = \pst^{-1}\ind_{U_i}$ is the associated measure of $U_i$ for $i \in [m]$, then $\mu_1, \ldots, \mu_m$ satisfy $\Pc(\eta, \lambda, d, m)$ asymptotically almost surely.
	\end{lemma}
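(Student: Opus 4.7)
The plan is to verify each of the properties (P0)--(P3$'$) from~\cite{cg16}*{Section~4} separately for a single random subset $U\in\Gnpst$, and then combine by a union bound over the $m$ copies, since $m$ is fixed. This reduces the statement to standard pseudorandomness estimates for the binomial random graph at the threshold $\pst=Cn^{-1/m_2(H)}$.

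The ``soft'' properties, asserting that the total mass and various local weights of $\mu_i$ match those of the constant function $1$, reduce to Chernoff's inequality applied to edge counts, vertex degrees, and pair codegrees: under the hypothesis $\pst\geq Cn^{-1/m_2(H)}$ we have both $\pst n\to\infty$ and $\pst^2 n\to\infty$ (using $m_2(H)>1$ for strictly balanced $H$ on at least three vertices), which is enough to beat a union bound over all vertices and pairs. The cut-norm-type conditions reduce via~\eqref{eq:cut-norm-products} to at most $2^{O(n)}$ bipartite weight statements of the form $e_{\mu_i}(U,W)\approx |U||W|$; each is controlled by a Chernoff tail of order $\exp(-c\pst n^2)$, which again dominates the union bound comfortably.

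The main obstacle is the extension-counting property, which requires that for every rooted subgraph $J\subseteq H$ the corresponding weighted extension count in $\mu_i$ is concentrated within a factor $(1\pm\eta)$ of its expectation. The choice $\pst\geq Cn^{-1/m_2(H)}$ is exactly the threshold that makes this work: strict $2$-balancedness of $H$ guarantees that for every proper subgraph $J'\subsetneq J$ with at least two vertices, $\pst^{e(J')}n^{v(J')}$ dominates the corresponding variance contributions, so that the ratio of variance to squared expectation of each extension count tends to zero. Concentration for a fixed rooted $J$ then follows from Janson's inequality for the lower tail together with a standard second-moment computation (or Kim--Vu polynomial concentration) for the upper tail, and a union bound over the finitely many rooted subgraphs of $H$ closes the argument. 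Organising these estimates into the precise quantitative form of (P0)--(P3$'$) demanded by the transference machinery is the technical content of the verification carried out in~\cite{cg16}*{Theorem~9.1}, and this verification transfers verbatim to our setting, as the only structural input is the $2$-density of $H$.
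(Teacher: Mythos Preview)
The paper does not actually give a proof of this lemma: it simply states that the result ``can be found in the proof of Theorem~9.1 in~\cite{cg16}''. Your proposal likewise ends by deferring to the verification in~\cite{cg16}*{Theorem~9.1}, so at the level of what is actually being claimed as a proof, you and the paper agree.

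Where you differ is that you prepend a heuristic description of what (P0)--(P3$'$) amount to and how they are checked. This added commentary contains an error worth flagging. You assert that $\pst^2 n\to\infty$ follows from $m_2(H)>1$; in fact $\pst^2 n = C^2 n^{1-2/m_2(H)}$, which is only unbounded when $m_2(H)>2$. For $H=K_3$ one has $m_2=2$ and $\pst^2 n=C^2$, while for $H=C_4$ one has $m_2=3/2$ and $\pst^2 n\to 0$. So the justification you give for controlling pair codegrees fails precisely in the small cases, and in any event (P0)--(P3$'$) in~\cite{cg16} are not formulated as degree/codegree conditions but as quantitative statements about the basic anti-uniform functions (convolution-type averages indexed by rooted subgraphs of $H$). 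Your extension-counting paragraph is closer to the mark, but the upper-tail step is more delicate than a second-moment computation; Conlon and Gowers handle it via their specific hypotheses on products of anti-uniform functions rather than via Kim--Vu. Since the paper itself makes no attempt to unpack these details and simply invokes~\cite{cg16}, the safest correction is to drop the codegree claim and the informal itemisation of (P0)--(P3$'$), and state directly that the lemma is proved in~\cite{cg16} as part of the proof of their Theorem~9.1.
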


	The following lemma can be deduced from the proof of Theorem 4.5 in~\cite{cg16}. (In their proof, the \textit{dense model} $\df'$ is denoted by $g$. The existence of $g$ is in the fourth sentence of the proof, and the final display on page 391 corresponds to (ii).) 
 \begin{lemma} ~\label{l:cg-deterministic}
		Given $\eps >0$, there are sufficiently small constants $\eta, \lambda >0$ and large integers $d, m$ such that if  $\mu_1, \ldots, \mu_m$ satisfy $\Pc(\eta, \lambda, d, m)$ and $f \leq m^{-1}(\mu_1 + \dots \mu_m)$, then the following holds.
		\begin{enumerate}[label=\rmlabel]
			\item \label{it:cg-exist} There is $\df'\colon [n]^{(2)}\to \R$ with $0 \leq \df' \leq 1$ and $\|f - (1+\eps/4) \df' \| \leq \frac{\eps}{2} $.
			\item \label{it:cg-H-count} If $\df \colon [n]^{(2)}\to \R$ is a function with $0 \leq \df \leq 1$ and $\|f - (1+\eps) \df \| \leq \eps$, then
			$$\Lambda_H(f) \geq \Lambda_H(\df) - 4 |E(H)|\cdot\eps.$$
		\end{enumerate}
	\end{lemma}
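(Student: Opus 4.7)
The plan is to deduce Lemma~\ref{l:cg-deterministic} by extracting and repackaging the two ingredients of~\cite{cg16}*{Theorem~4.5}: a Hahn--Banach style \emph{dense model theorem} (for part~\ref{it:cg-exist}) and a \emph{counting lemma} that controls $H$-counts by the norm $\|\cdot\|$ (for part~\ref{it:cg-H-count}). The pseudorandomness hypothesis $\Pc(\eta,\lambda,d,m)$ enters both steps through uniform control on the basic anti-uniform functions $\Phi_{\mu,1}$; the parameters $\eta,\lambda$ are chosen small and $d,m$ large depending on $\eps$ exactly as prescribed in~\cite{cg16}.

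For part~\ref{it:cg-exist}, I would invoke the dense model theorem essentially verbatim. Given $f \leq m^{-1}(\mu_1+\dots+\mu_m)$ with the $\mu_i$ satisfying $\Pc(\eta,\lambda,d,m)$, a Hahn--Banach / min-max argument in the vector space $\RR^{[n]^{(2)}}$ equipped with the dual pairing $\langle\cdot,\cdot\rangle$ produces a function $\df' \colon [n]^{(2)} \to [0,1]$ with $\|f-(1+\eps/4)\df'\| \leq \eps/2$. The boundedness $0 \leq \df' \leq 1$ comes from pairing against indicator functions (captured by the cut-norm component of $\|\cdot\|$ via~\eqref{eq:cut-norm-products}), while the anti-uniform test functions in $\Phi_{\mu,1}$ ensure that $\df'$ is close enough to $f$ to have useful $H$-counts in the next step.

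For part~\ref{it:cg-H-count}, I would unfold a telescoping identity over the edges of $H$. Enumerating $E(H) = \{e_1,\dots,e_{|E(H)|}\}$ and swapping edge weights one at a time between $f$ and $(1+\eps)\df$ gives
$$\Lambda_H(f) - (1+\eps)^{|E(H)|}\Lambda_H(\df) = \sum_{i=1}^{|E(H)|} T_i\,,$$
where each $T_i$ is an $H$-count in which the edge $e_i$ is weighted by $f-(1+\eps)\df$ and the remaining edges carry either $f$ or $(1+\eps)\df$, both pointwise bounded by $(1+\eps)m^{-1}(\mu_1+\dots+\mu_m)$. By construction of $\Phi_{\mu,1}$, each $T_i$ equals, up to a bounded scalar, an inner product $\langle f-(1+\eps)\df,\phi_i\rangle$ with $\phi_i \in \Phi_{\mu,1}$, so $|T_i| \leq 2\eps$ by the hypothesis $\|f-(1+\eps)\df\| \leq \eps$. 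Combining this with the elementary estimate $[(1+\eps)^{|E(H)|}-1]\Lambda_H(\df) \leq 2|E(H)|\eps$, valid for small $\eps$ and $\df \leq 1$, yields $\Lambda_H(f) \geq \Lambda_H(\df) - 4|E(H)|\eps$.

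The principal obstacle is verifying the realisation of each $T_i$ as a bounded inner product with some $\phi_i \in \Phi_{\mu,1}$. This is the content of the \emph{generalised von Neumann lemma} for $H$-counts in the sparse setting, and requires choosing $\Phi_{\mu,1}$ precisely so that all edge-swap expressions of this form are captured. The conditions (P0)--(P3$'$) on $\mu_1,\dots,\mu_m$ are calibrated so that these $\phi_i$ have supremum bounded by a constant depending only on $H$, uniformly in $n$; without this calibration the `other edge' factors $f, (1+\eps)\df \leq (1+\eps)m^{-1}\sum \mu_i$ could blow up the $T_i$. With this ingredient in place, both parts of the lemma follow directly from the structure of the proof of~\cite{cg16}*{Theorem~4.5}.
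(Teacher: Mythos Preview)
Your proposal is correct and follows precisely the route the paper points to: the paper does not supply its own proof of this lemma but states that it ``can be deduced from the proof of Theorem~4.5 in~\cite{cg16}'', and your sketch --- the Hahn--Banach dense model theorem for part~\ref{it:cg-exist} and the edge-by-edge telescoping (generalised von Neumann) argument for part~\ref{it:cg-H-count} --- is exactly that deduction. One cosmetic remark: in your final estimate the ``elementary estimate'' $[(1+\eps)^{|E(H)|}-1]\Lambda_H(\df)\leq 2|E(H)|\eps$ is superfluous, since $(1+\eps)^{|E(H)|}\Lambda_H(\df)\geq\Lambda_H(\df)$ already gives the lower bound once $\sum|T_i|$ is controlled.
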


	Now we can deduce our desired result.
	\begin{proof}[Proof outline for Theorem~\ref{t:transference}]
		Given $\eps >0$, let $\eta , \lambda, d$ and $m$ be as required for the conclusion of Lemma~\ref{l:cg-deterministic} to hold. The random graph $G$ will be sampled  in $m$ rounds -- that is, we  set $\pst = Cn^{-1/m_2(H) }$ and $p = 1- (1-\pst )^m \geq \left(1- \eps/4 \right)\pst m$ for sufficiently large $n$.  Following the notation of Conlon and Gowers, let $U_1, \ldots, U_m$ be $m$ mutually independent random graphs sampled independently with edge probability $\pst$, where $C$ is a sufficiently large constant. Our random graph $G$ with edge probability $p$ will then be sampled by taking the union of $U_1, \dots, U_m$, which indeed has the claimed distribution. For $i \in [m]$, let $\mu_i = \pst ^{-1}\ind_{U_i}$ be the associated measure of $U_i$, and define $\mu =m^{-1}(\mu_1 + \dots + \mu_m)$. Assume that $(\mu_i)_{i \in [m]}$ satisfy the property $\Pc = \Pc(\eta, \lambda, d, m)$. By Lemma~\ref{l:cg-gnp}, this occurs asymptotically almost surely.

		We will apply Lemma~\ref{l:cg-deterministic} to deduce the existence of $\df_F$ and part~\ref{it:transf-Hcount}. Let $F$ be a subgraph of $G$, so $0 \leq \ind_F \leq \ind_G \leq \sum_{i \in [m]} \ind_{U_i}$. Define
		\begin{equation}  \label{eq:f-def}
			\tilde{\ff} = p^{-1} \ind_F  \text{\quad and \quad} \ff = \frac{1+\eps/4}{1+\eps} p^{-1} \ind_F.
		\end{equation}
		
		We claim that $\ff \leq \mu$. Indeed, recalling that  
        $p  \geq \left(1- \eps/4 \right)\pst m$, and hence 
        \[
            p^{-1} \cdot \frac{1+\eps/4}{1+\eps} 
            \leq 
            (m\pst)^{-1},
        \]
        for large $n$, we have
		$$\ff = \frac{1+\eps/4}{1+\eps} p^{-1} \ind_F \leq \frac{1+\eps/4}{1+\eps} p^{-1} \sum_{i \in [m]} \ind_{U_i}\leq (m \pst)^{-1}\sum_{i \in [m]} \ind_{U_i} = m^{-1}\sum_{i \in [m]}\mu_i = \mu\,.$$
		Thus we can apply the above-mentioned claims.
		
		Lemma~\ref{l:cg-deterministic}~\ref{it:cg-exist} applied to $f = \ff$ gives a function $\df_F=\df'$ such that $\|\ff - (1+\eps/4)\df_F \| \leq \frac{\eps}{2} $. Multiplying by $\frac{1+\eps}{1+\eps/4}$ and recalling~\eqref{eq:f-def}, we obtain
		$$\| \tilde{\ff} - (1+\eps)\df_F \| \leq {\eps}\,,$$
		as required.
		
		To see~\ref{it:transf-Hcount}, take  $\df$ with $\|p^{-1} \ind_F - (1+\eps) \df\|\leq \eps$ and $0 \leq \df \leq 2\ell$. We have
		$$\norm{ \frac{\ind_F}{2 \ell p} - \frac{(1+\eps)\df}{2\ell} } \leq \frac{\eps}{2\ell} < \eps\,.$$
		We may apply Lemma~\ref{l:cg-deterministic}~\ref{it:cg-H-count} with $f = \frac{\ind_F}{2\ell p} \leq \mu$ and $\df$ replaced by $\frac{\df}{2\ell} \leq 1$ to obtain
		$$\Lambda_H \left(\frac{\ind_F}{2\ell p} \right) \geq 
		\Lambda_H \left(\frac{\df}{2\ell } \right) -  4 |E(H)|\cdot\eps\,.$$
		Using the fact that $\Lambda_H(\alpha \ff')  = \alpha^{|E(H)|}\Lambda_H(\ff')$ for any constant $\alpha \geq 0$ and any $\ff'\colon [n]^{(2)}\to \R$, it follows that
		$$p^{-|H|} \Lambda_H(F) \geq \Lambda_H(\df)  - 4\eps|H| (2\ell)^{|E(H)|} \geq \Lambda_H(\df)  - 4\eps (2\ell)^{\ell^2}, $$
		as required.
		
		Statement~\ref{it:transf-edgedist} follows from $\df \leq 1$, the definition of $\cutnorm{\,\cdot\,}$
		and the triangle inequality. That is,
		\[
		\cutnorm{\ff - \df} 
		\leq 
		\cutnorm{\ff - (1+ \eps) \df} + \cutnorm{\eps \df} 
		\leq  
		\cutnorm{\ff - (1+ \eps) \df} + \eps 
		\leq 2 \eps\,.\qedhere
		\]
	\end{proof}

 \subsection*{Acknowledgements}
    We would like to thank Henri Ortm\"uller for pointing out reference~\cite{ajmp03} to us and we are also grateful to 
    the anonymous referees for their helpful comments.
	
	\begin{bibdiv}
		\begin{biblist}
			
			\bib{ajmp03}{article}{
				author={Alon, Noga},
				author={Jiang, Tao},
				author={Miller, Zevi},
				author={Pritikin, Dan},
				title={Properly colored subgraphs and rainbow subgraphs in edge-colorings
					with local constraints},
				journal={Random Structures Algorithms},
				volume={23},
				date={2003},
				number={4},
				pages={409--433},
				issn={1042-9832},
				review={\MR{2016871}},
				doi={10.1002/rsa.10102},
			}

			\bib{bms15}{article}{
				author={Balogh, J\'{o}zsef},
				author={Morris, Robert},
				author={Samotij, Wojciech},
				title={Independent sets in hypergraphs},
				journal={J. Amer. Math. Soc.},
				volume={28},
				date={2015},
				number={3},
				pages={669--709},
				issn={0894-0347},
				review={\MR{3327533}},
				doi={10.1090/S0894-0347-2014-00816-X},
			}
   
            \bib{bt87}{article}{
   			author={Bollob\'{a}s, B.},
   			author={Thomason, A.},
   			title={Threshold functions},
   			journal={Combinatorica},
   			volume={7},
   			date={1987},
   			number={1},
   			pages={35--38},
   			issn={0209-9683},
   			review={\MR{905149}},
   			doi={10.1007/BF02579198},
			}

			\bib{cg16}{article}{
				author={Conlon, D.},
				author={Gowers, W. T.},
				title={Combinatorial theorems in sparse random sets},
				journal={Ann. of Math. (2)},
				volume={184},
				date={2016},
				number={2},
				pages={367--454},
				issn={0003-486X},
				review={\MR{3548529}},
				doi={10.4007/annals.2016.184.2.2},
			}
			
			\bib{cgss14}{article}{
				author={Conlon, D.},
				author={Gowers, W. T.},
				author={Samotij, W.},
				author={Schacht, M.},
				title={On the K\L R conjecture in random graphs},
				journal={Israel J. Math.},
				volume={203},
				date={2014},
				number={1},
				pages={535--580},
				issn={0021-2172},
				review={\MR{3273450}},
				doi={10.1007/s11856-014-1120-1},
			}
			
			\bib{eh67}{article}{
				author={Erd\H{o}s, P.},
				author={Hajnal, A.},
				title={Research Problem 2-5},
				journal={J. Combinatorial Theory},
				volume={2},
				date={1967},
				pages={p.\ 105},
			}
			
			\bib{er50}{article}{
				author={Erd\H{o}s, P.},
				author={Rado, R.},
				title={A combinatorial theorem},
				journal={J. London Math. Soc.},
				volume={25},
				date={1950},
				pages={249--255},
				issn={0024-6107},
				review={\MR{37886}},
				doi={10.1112/jlms/s1-25.4.249},
			}
			
			\bib{f70}{article}{
				author={Folkman, Jon},
				title={Graphs with monochromatic complete subgraphs in every edge
					coloring},
				journal={SIAM J. Appl. Math.},
				volume={18},
				date={1970},
				pages={19--24},
				issn={0036-1399},
				review={\MR{268080}},
				doi={10.1137/0118004},
			}
			
			\bib{fk99}{article}{
				author={Frieze, Alan},
				author={Kannan, Ravi},
				title={Quick approximation to matrices and applications},
				journal={Combinatorica},
				volume={19},
				date={1999},
				number={2},
				pages={175--220},
				issn={0209-9683},
				review={\MR{1723039}},
				doi={10.1007/s004930050052},
			}
			
			\bib{fkss}{article}{
				author={Friedgut, E.},
				author={Kuperwasser, E.},
				author={Samotij,W.},
				author={Schacht, M.},
				title={Sharp thresholds for Ramsey properties},
				eprint={2207.13982},
			}
						
			\bib{kkm14}{article}{
				author={Kohayakawa, Y.},
				author={Konstadinidis, P. B.},
				author={Mota, G. O.},
				title={On an anti-Ramsey threshold for random graphs},
				journal={European J. Combin.},
				volume={40},
				date={2014},
				pages={26--41},
				issn={0195-6698},
				review={\MR{3191486}},
				doi={10.1016/j.ejc.2014.02.004},
			}
			
			\bib{kmps19}{article}{
                author={Kohayakawa, Yoshiharu},
                author={Mota, Guilherme Oliveira},
                author={Parczyk, Olaf},
                author={Schnitzer, Jakob},
                title={The anti-Ramsey threshold of complete graphs},
                journal={Discrete Math.},
                volume={346},
                date={2023},
                number={5},
                pages={Paper No. 113343, 12},
                issn={0012-365X},
                review={\MR{4543506}},
                doi={10.1016/j.disc.2023.113343},
            }
			
			\bib{klr97}{article}{
				author={Kohayakawa, Y.},
				author={\L uczak, T.},
				author={R\"{o}dl, V.},
				title={On $K^4$-free subgraphs of random graphs},
				journal={Combinatorica},
				volume={17},
				date={1997},
				number={2},
				pages={173--213},
				issn={0209-9683},
				review={\MR{1479298}},
				doi={10.1007/BF01200906},
			}
			
			\bib{lr95}{article}{
				author={Lefmann, Hanno},
				author={R\"{o}dl, Vojt\v{e}ch},
				title={On Erd\H{o}s-Rado numbers},
				journal={Combinatorica},
				volume={15},
				date={1995},
				number={1},
				pages={85--104},
				issn={0209-9683},
				review={\MR{1325273}},
				doi={10.1007/BF01294461},
			}
			
			\bib{ls06}{article}{
				author={Lov\'{a}sz, L\'{a}szl\'{o}},
				author={Szegedy, Bal\'{a}zs},
				title={Limits of dense graph sequences},
				journal={J. Combin. Theory Ser. B},
				volume={96},
				date={2006},
				number={6},
				pages={933--957},
				issn={0095-8956},
				review={\MR{2274085}},
				doi={10.1016/j.jctb.2006.05.002},
			}
			
			\bib{nenadov21}{article}{
				author={Nenadov, Rajko},
				title={A new proof of the K\L R conjecture},
				journal={Adv. Math.},
				volume={406},
				date={2022},
				pages={Paper No. 108518, 16},
				issn={0001-8708},
				review={\MR{4438063}},
				doi={10.1016/j.aim.2022.108518},
			}
			
			\bib{npss17}{article}{
				author={Nenadov, Rajko},
				author={Person, Yury},
				author={\v{S}kori\'{c}, Nemanja},
				author={Steger, Angelika},
				title={An algorithmic framework for obtaining lower bounds for random
					Ramsey problems},
				journal={J. Combin. Theory Ser. B},
				volume={124},
				date={2017},
				pages={1--38},
				issn={0095-8956},
				review={\MR{3623165}},
				doi={10.1016/j.jctb.2016.12.007},
			}
			
			\bib{morris23pers}{article}{
				author={Morris, Patrick},
				journal={Personal communication}
			}
			
			\bib{nr76}{article}{
				author={Ne\v set\v ril, Jaroslav},
				author={R\"{o}dl, Vojt\v{e}ch},
				title={The Ramsey property for graphs with forbidden complete subgraphs},
				journal={J. Combinatorial Theory Ser. B},
				volume={20},
				date={1976},
				number={3},
				pages={243--249},
				issn={0095-8956},
				review={\MR{412004}},
				doi={10.1016/0095-8956(76)90015-0},
			}
			
			\bib{nr81}{article}{
				author={Ne\v set\v ril, Jaroslav},
				author={R\"{o}dl, Vojt\v{e}ch},
				title={Simple proof of the existence of restricted Ramsey graphs by means
					of a partite construction},
				journal={Combinatorica},
				volume={1},
				date={1981},
				number={2},
				pages={199--202},
				issn={0209-9683},
				review={\MR{625551}},
				doi={10.1007/BF02579274},
			}
			
			\bib{r30}{article}{
				author={Ramsey, F. P.},
				title={On a Problem of Formal Logic},
				journal={Proc. London Math. Soc. (2)},
				volume={30},
				date={1929},
				number={4},
				pages={264--286},
				issn={0024-6115},
				review={\MR{1576401}},
				doi={10.1112/plms/s2-30.1.264},
			}
			
			\bib{rr}{article}{
				author={Reiher, Chr.},
				author={R\"odl, V.},
				journal={Personal communication}
			}
			
			\bib{rrsss22}{article}{
				author={Reiher, Chr.},
				author={R\"{o}dl, Vojt\v{e}ch},
				author={Sales, Marcelo},
				author={Sames, Kevin},
				author={Schacht, Mathias},
				title={On quantitative aspects of a canonisation theorem for
					edge-orderings},
				journal={J. Lond. Math. Soc. (2)},
				volume={106},
				date={2022},
				number={3},
				pages={2773--2803},
				issn={0024-6107},
				review={\MR{4498567}},
				doi={10.1112/jlms.12648},
			}
			
			\bib{rr93}{article}{
				author={R\"{o}dl, V.},
				author={Ruci\'{n}ski, A.},
				title={Lower bounds on probability thresholds for Ramsey properties},
				conference={
					title={Combinatorics, Paul Erd\H{o}s is eighty, Vol. 1},
				},
				book={
					series={Bolyai Soc. Math. Stud.},
					publisher={J\'{a}nos Bolyai Math. Soc., Budapest},
				},
				date={1993},
				pages={317--346},
				review={\MR{1249720}},
			}
			
			\bib{rr95}{article}{
				author={R\"{o}dl, V.},
				author={Ruci\'{n}ski, A.},
				title={Threshold functions for Ramsey properties},
				journal={J. Amer. Math. Soc.},
				volume={8},
				date={1995},
				number={4},
				pages={917--942},
				issn={0894-0347},
				review={\MR{1276825}},
				doi={10.2307/2152833},
			}
			
			\bib{st15}{article}{
				author={Saxton, David},
				author={Thomason, Andrew},
				title={Hypergraph containers},
				journal={Invent. Math.},
				volume={201},
				date={2015},
				number={3},
				pages={925--992},
				issn={0020-9910},
				review={\MR{3385638}},
				doi={10.1007/s00222-014-0562-8},
			}
			
			\bib{schacht16}{article}{
				author={Schacht, Mathias},
				title={Extremal results for random discrete structures},
				journal={Ann. of Math. (2)},
				volume={184},
				date={2016},
				number={2},
				pages={333--365},
				issn={0003-486X},
				review={\MR{3548528}},
				doi={10.4007/annals.2016.184.2.1},
			}
			
			\bib{shelah95}{article}{
				author={Shelah, Saharon},
				title={Finite canonization},
				journal={Comment. Math. Univ. Carolin.},
				volume={37},
				date={1996},
				number={3},
				pages={445--456},
				issn={0010-2628},
				review={\MR{1426909}},
			}
			
		\end{biblist}
	\end{bibdiv}
	
\end{document}